\newtheorem{theorem}{Theorem}
\theoremstyle{plain}
\newtheorem{corollary}{Corollary}
\newtheorem{definition}{Definition}
\newtheorem{lemma}{Lemma}
\newtheorem{notation}{Notation}
\newtheorem{proposition}{Proposition}
\newtheorem{remark}{Remark}
\numberwithin{equation}{section}
\begin{document}
\title[On uniqueness of solutions of Navier-Stokes equations]{On uniqueness
of weak solutions of the incompressible Navier-Stokes equations}
\author{Kamal N. Soltanov}
\address{{\small National Academy of Sciences of Azerbaijan, Baku,
AZERBAIJAN }}
\email{sultan\_kamal@hotmail.com}
\urladdr{}
\subjclass[2010]{Primary 35K55, 35K61, 35D30, 35Q30; Secondary 76D03, 76N10}
\date{}
\keywords{Navier-Stokes Equations, Uniqueness, Auxiliary problems,
Solvability}

\begin{abstract}
In this article the question on uniqueness of weak solution of the
incompressible Navier-Stokes Equations in the 3-dimensional case is studied.
Here the investigation is carried out with use of another approach. The
uniqueness of velocity for the considered problem is proved for given
functions from spaces that possesess some smoothness. Moreover, these spaces
are dense in respective spaces of functions, under which were proved
existence of the weak solutions. In addition here the solvability and
uniqueness of the weak solutions of auxiliary problems associated with the
main problem is investigated, and also one conditional result on uniqueness
is proved.
\end{abstract}

\maketitle

\section{\label{Sec_1}Introduction}

In this article we investigate the question on the uniqueness of the weak
solutions of the incompressible Navier-Stokes equations, namely is
investigated question: when the weak solution of the following problem is
unique? 
\begin{equation}
\frac{\partial u_{i}}{\partial t}-\nu \Delta u_{i}+\underset{j=1}{\overset{n}{\sum }}u_{j}\frac{\partial u_{i}}{\partial x_{j}}+\frac{\partial p}{\partial x_{i}}=f_{i},\quad i=\overline{1,n},
\label{1}
\end{equation}%
\begin{equation}
\func{div}u=\underset{i=1}{\overset{n}{\sum }}\frac{\partial u_{i}}{\partial x_{i}}=0,\quad x\in \Omega \subset R^{n},t>0\quad ,
\label{2}
\end{equation}%
\begin{equation}
u\left( 0,x\right) =u_{0}\left( x\right) ,\quad x\in \Omega ;\quad u\left\vert \ _{\left( 0,T\right) \times \partial \Omega }\right. =0
\label{3}
\end{equation}%
where $\Omega \subset R^{n}$ is a bounded domain with sufficiently smooth
boundary $\partial \Omega $, $T>0$ is a positive number. In this work for
study of the posed question two distinct way are used, therefore it consist
of two parts.

As is well-known Navier-Stokes equations describe the motion of a fluid in $%
R^{n}$ ($n=2$ or $3$). Consequently, in this problem $u(x,t)=\left\{
u_{i}(x,t)\right\} _{1}^{n}\in R^{n}$ is an unknown velocity vector and $%
p(x,t)\in R$ is an unknown pressure, at the position $x\in R^{n}$ and time $%
t\geq 0$; $f_{i}(x,t)$ are the components of a given, externally applied
force (e.g. gravity), $\nu $ is a positive coefficient (the viscosity), $%
u_{0}\left( x\right) \in R^{n}$ is a sufficiently smooth vector function
(vector field).

As is well-known in \cite{Ler1} is shown that the Navier-Stokes equations (%
\ref{1}), (\ref{2}), (\ref{3}) in three dimensions case has a weak solution $%
(u,p)$ with suitable properties (see, also, \cite{Hop}, \cite{Lad1}, \cite%
{MajBer}, \cite{Con1}, \cite{Fef1}, \cite{Lio1}). It is known that
uniqueness of weak solution of the Navier-Stokes equation in two space
dimensions case were proved (\cite{LioPro}, \cite{Lio1}, see also \cite{Lad2}%
), but the result of such type for the uniqueness of weak solutions in three
space dimensions case as yet isn't known. It should be noted that in three
dimensional case the uniqueness was studied also, but under complementary
conditions on the smoothness of the solution (see, e.g. \cite{Lio1}, \cite%
{Tem1}, \cite{Sch1}, etc.). It is known for the Euler equations were shown
that uniqueness of weak solutions isn't (see, \cite{Sch1}, \cite{Shn1}).

We need to note the regularity of solutions in three dimensional case were
investigated and partial regularity of the suitable weak solutions of the
Navier--Stokes equation were obtained (see, e.g. \cite{Sch2}, \cite%
{CafKohNir}, \cite{Lin1}, \cite{Lio1}, \cite{Lad1}, \cite{ChL-RiMay}, \cite%
{Gal}). There exist many works which study different properties of solutions
of the Navier--Stokes equation (see, \cite{Lio1}, \cite{Lad1}, \cite{Lin1}, 
\cite{Fef1}, \cite{FoiManRosTem}, \cite{FoiRosTem1}, \cite{FoiRosTem2}, \cite%
{FoiRosTem3}, \cite{GlaSveVic}, \cite{HuaWan}, \cite{PerZat}, \cite{Sol1}, 
\cite{Sol2}, \cite{Tem1}), etc.) and also different modifications of
Navier--Stokes equation (see, e.g. \cite{Lad1}, \cite{Lio1}, \cite{Sol3},
etc.). \ 

It need note that earlier under various additional conditions of the type of
certain smoothness of the weak solutions different results on the uniqueness
of solution of the incompressible Navier-Stokes equation in $3D$ case were
obtained (see, e. g. \cite{Hop}, \cite{Lad2}, \cite{Lio1}, \cite{Tem1},
etc.). Here we would like to note the result of article \cite{Fur} that
possesses of some proximity to the main result of this article. In this
article the system of equations (1.1%
${{}^1}$%
) - (1.3) was examined, which is obtained from (1.1) - (1.3) under studies
the solvability of this problem by the Hopf-Leray approach (that below will
be explained, see, e.g. \cite{Tem1}). In \cite{Fur} the problem in the
following form was studied 
\begin{equation*}
Nu=\frac{du}{dt}+\nu Au+B(u)=f,\quad \gamma _{0}u=u_{0},
\end{equation*}%
where $B(u)\equiv \underset{j=1}{\overset{3}{\sum }}u_{j}\frac{\partial u_{i}%
}{\partial x_{j}}$ and $\gamma _{0}u\equiv u\left( 0\right) $. In which the
author shows that $\left( N,\gamma _{0}\right) :Z\longrightarrow L^{2}\left(
0,T:H^{-1/2}\left( 
\Omega
\right) \right) \times H^{1/2}\left( 
\Omega
\right) $\ is the continuous operator under the condition that $%
\Omega
\subset R^{3}$ is a bounded region whose boundary $\partial 
\Omega
$ is a closed manifold of class $C^{\infty }$, where 
\begin{equation*}
Z=\left\{ \left. u\in L^{2}\left( 0,T:H^{3/2}\left( 
\Omega
\right) \right) \right\vert \ \ \frac{du}{dt}\in L^{2}\left(
0,T:H^{-1/2}\left( 
\Omega
\right) \right) \right\} .
\end{equation*}

Moreover, here is proved that if to denote by $F_{\gamma _{0}}$ the image: $%
N\left( Z_{u_{0}}\right) =F_{\gamma _{0}}$ for $u_{0}\in H^{1/2}\left( 
\Omega
\right) $ then for each $f\in F_{\gamma _{0}}$ there exists only one
solution $u\in Z$ such that $Nu=f$ and $\gamma _{0}u=u_{0}$, here $%
Z_{u_{0}}=\left\{ \left. u\in Z\right\vert \ \gamma _{0}u=u_{0}\right\} $,
and also the density of set $F_{\gamma _{0}}$ in $L^{2}\left(
0,T:H^{-1/2}\left( 
\Omega
\right) \right) $ in the topology of $L^{p}\left( 0,T:H^{-l}\left( 
\Omega
\right) \right) $ under certain conditions on $p,l$. The proof given in \cite%
{Fur} is similar to the proof of \cite{Lio1} and \cite{Tem1}, but the result
not follows from their results.

In the beginning in this paper certain explanation why for study of the
posed question is enough to investigate the problem (1.1%
${{}^1}$%
) - (\ref{3}) is provided. Here the approach Hopf-Leray (with taking into
account of the result of de Rham) for study the existence of the weak
solution of the considered problem is used, as usually.

Unlike above investigations here we study the question on the uniqueness in
the case when the weak solution $u$ of the problem (1.1%
${{}^1}$%
) - (\ref{3}) is contained in $\mathcal{V}\left( Q^{T}\right) $ (in $3D$
case), consequently, as is known, for this the following condition is
sufficiently: functions $u_{0}$ and $f$ satisfy conditions 
\begin{equation*}
u_{0}\in H\left( \Omega \right) ,\quad f\in L^{2}\left( 0,T;V^{\ast }\left(
\Omega \right) \right) .
\end{equation*}

\begin{notation}
\label{N_1}The result obtained for the problem (1.1%
${{}^1}$%
) - (\ref{3}) allows us to respond to the posed question, namely this shows
the uniqueness of the velocity vector $u$.
\end{notation}

So, in this article the uniqueness of the weak solutions $u$ obtaining by
the Hopf-Leray's approach of the mixed problem with Dirichlet boundary
condition for the incompressible Navier-Stokes system in the $3D$ case is
investigated. For investigation we use an approach that is different from
usual methods used for study of the questions of such type. The approach
used here allows us to receive more general result on the uniqueness of the
weak solution (of the velocity vector $u$) of the mixed problem for the
incompressible Navier--Stokes equation under more general conditions. In
addition, here in order to carry out of the proof of the main result, in the
beginning the existence and uniqueness of the weak solutions of auxiliary
problems are studied.

For study of the uniqueness of the solution of the problem we also use of
the formulation of the problem in the weak sense according to J. Leray \cite%
{Ler1}. As well-known, problem (\ref{1}) - (\ref{3}) and (1.1%
${{}^1}$%
) - (\ref{3}) was investigated in many works (see, \cite{Lio1}, \cite{Tem1}
and \cite{Gal}). Here we will bring the result on weak solvability from the
book \cite{Tem1}.

\begin{theorem}
(\cite{Tem1}) Let $\Omega $ be a Lipschitz open bounded set in $R^{n}$, $%
n\leq 4$. Let there be given $f$ and $u_{0}$ which

satisfy $f\in L^{2}\left( 0,T;V^{\ast }\left( \Omega \right) \right) $ and $%
u_{0}\in H\left( \Omega \right) $.\ Then there exists at least one function $%
u$ which

satisfies $u\in L^{2}\left( 0,T;V\left( \Omega \right) \right) $, $\frac{du}{%
dt}\in L^{1}\left( 0,T;V^{\ast }\left( \Omega \right) \right) $, $u\left(
0\right) =u_{0}$ and the equation 
\begin{equation}
\frac{d}{dt}\left\langle u,v\right\rangle -\left\langle \nu \Delta
u,v\right\rangle +\left\langle \underset{j=1}{\overset{n}{\sum }}u_{j}\frac{%
\partial u}{\partial x_{j}},v\right\rangle =\left\langle f,v\right\rangle
\label{1a}
\end{equation}%
for any $v\in V\left( \Omega \right) $. Moreover, $u\in L^{\infty }\left(
0,T;H\left( \Omega \right) \right) $ and $u\left( t\right) $\ is weakly
continuous from $\left[ 0,T\right] $ into $H\left( \Omega \right) $ (i. e. $%
\forall v\in H\left( \Omega \right) $, $t\longrightarrow \left\langle
u\left( t\right) ,v\right\rangle $ is a continuous scalar function, and
consequently, $\left\langle u\left( 0\right) ,v\right\rangle =\left\langle
u_{0},v\right\rangle $).
\end{theorem}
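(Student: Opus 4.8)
The plan is to construct $u$ by the Faedo--Galerkin method and then pass to the limit. First I would fix a basis $\{w_j\}_{j\ge 1}$ of $V(\Omega)$ that is orthonormal in $H(\Omega)$ (for instance the eigenfunctions of the Stokes operator, which is legitimate because $\Omega$ is bounded and $V(\Omega)$ embeds compactly into $H(\Omega)$), and seek an approximate solution $u_m(t)=\sum_{j=1}^{m} g_{jm}(t)\,w_j$ satisfying the finite-dimensional system obtained by testing \eqref{1a} against $w_1,\dots,w_m$, together with $u_m(0)=P_m u_0$, where $P_m$ is the $H(\Omega)$-orthogonal projection onto $\mathrm{span}\{w_1,\dots,w_m\}$. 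This is a system of ordinary differential equations with quadratic nonlinearity and a Carath\'eodory right-hand side (since $f\in L^{2}(0,T;V^{\ast}(\Omega))$), so it has a local solution on some interval $[0,T_m)$.

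Next I would derive the a priori estimates. Multiplying the $j$-th equation by $g_{jm}(t)$ and summing over $j$, and using the antisymmetry $\big\langle\sum_{j}(u_m)_j\,\partial u_m/\partial x_j,\,u_m\big\rangle=0$ valid for divergence-free fields, one obtains the energy identity $\tfrac12\frac{d}{dt}\,|u_m(t)|_{H}^{2}+\nu\|u_m(t)\|_{V}^{2}=\langle f(t),u_m(t)\rangle$. Integrating in $t$ and applying Young's inequality to the right-hand side yields bounds for $u_m$ in $L^{\infty}(0,T;H(\Omega))$ and in $L^{2}(0,T;V(\Omega))$ uniform in $m$; in particular the approximate solutions are global, $T_m=T$. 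It then remains to bound $du_m/dt$ in $V^{\ast}(\Omega)$: writing $du_m/dt=P_m\big(f-\nu A u_m-B(u_m)\big)$ and estimating the trilinear term by H\"older's inequality and Sobolev embedding, one gets a uniform bound for $du_m/dt$ in $L^{1}(0,T;V^{\ast}(\Omega))$ --- this is precisely the step where the restriction $n\le 4$ is used, and it is what dictates the weak integrability asserted for $du/dt$.

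Then I would extract a subsequence. From the uniform bounds, $u_m\rightharpoonup u$ weakly in $L^{2}(0,T;V(\Omega))$ and weakly-$\ast$ in $L^{\infty}(0,T;H(\Omega))$, while the Aubin--Lions--Simon compactness lemma, applied with $V(\Omega)\hookrightarrow\hookrightarrow H(\Omega)\hookrightarrow V^{\ast}(\Omega)$, gives strong convergence $u_m\to u$ in $L^{2}(0,T;H(\Omega))$. Passing to the limit in the linear terms is immediate. For the nonlinear term I would fix $v=w_k$ and a scalar test function $\psi\in C^{1}([0,T])$, multiply the $k$-th Galerkin equation by $\psi(t)$, integrate over $(0,T)$, and use the strong $L^{2}(0,T;H(\Omega))$ convergence together with the weak $L^{2}(0,T;V(\Omega))$ convergence to pass to the limit in $\int_{0}^{T} b(u_m,u_m,w_k)\,\psi\,dt$; a density argument in $k$ then delivers \eqref{1a} for every $v\in V(\Omega)$. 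The initial condition $u(0)=u_0$ is recovered by integrating \eqref{1a} by parts in $t$ against a $\psi$ with $\psi(T)=0$ and $\psi(0)=1$ and comparing with the analogous identity for $u_m$; the inclusion $u\in L^{\infty}(0,T;H(\Omega))$ follows from the uniform bound, and the weak continuity of $t\mapsto\langle u(t),v\rangle$ follows from the classical fact that a function in $L^{\infty}(0,T;H(\Omega))$ whose distributional time derivative lies in $L^{1}(0,T;V^{\ast}(\Omega))$ is weakly continuous into $H(\Omega)$, so that $u(0)$ is well defined and equals $u_0$.

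The step I expect to be the main obstacle is the handling of the nonlinear term: both obtaining the uniform bound on $du_m/dt$ in $L^{1}(0,T;V^{\ast}(\Omega))$, where the hypothesis $n\le 4$ enters, and carrying out the limit passage in $\int_{0}^{T} b(u_m,u_m,w_k)\,\psi\,dt$, which genuinely requires the strong convergence supplied by Aubin--Lions rather than mere weak convergence. The remaining steps --- the energy identity, the extraction of the subsequence, and the recovery of the initial data --- are routine functional analysis.
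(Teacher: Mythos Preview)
The paper does not give its own proof of this theorem: it is quoted from \cite{Tem1} as background, and your outline is precisely the classical Faedo--Galerkin argument carried out there. It also matches the scheme the paper later reproduces in Section~\ref{Sec_I.4} (Subsections~\ref{SS_I.4.1}--\ref{SS_I.4.5}) for the auxiliary two-dimensional problem, so your approach is both correct and the expected one.
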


"Moreover, in the case when $n=3$ a weak solution $u$ satisfy 
\begin{equation*}
u\in V\left( Q^{T}\right) ,\quad u^{\prime }\equiv \frac{\partial u}{%
\partial t}\in L^{\frac{4}{3}}\left( 0,T;V^{\ast }(%
\Omega
)\right) ,\quad
\end{equation*}%
and also is almost everywhere (a.e.) equal to some continuous function from $%
\left[ 0,T\right] $ into $H$, so that (\ref{3}) is meaningful, with use of
the obtained properties that any weak solution belong to the bounded subset
of 
\begin{equation*}
\mathcal{V}\left( Q^{T}\right) \equiv V\left( Q^{T}\right) \cap
W^{1,4/3}\left( 0,T;V^{\ast }(%
\Omega
)\right)
\end{equation*}%
and satisfies the equation (\ref{1a})." \footnote{%
The expression $\left\langle g,h\right\rangle $ here and further denote $%
\left\langle g,h\right\rangle =\underset{i=1}{\overset{3}{\sum }}\ \underset{%
\Omega }{\int }g_{i}h_{i}dx$ for any $g,h\in \left( H\left( \Omega \right)
\right) $, or $g\in V\left( \Omega \right) $ and $h\in V^{\ast }\left(
\Omega \right) $, respectively.}

In what follows we will base on the mentioned theorem about the existence of
the weak solution of problem (1.1%
${{}^1}$%
) - (\ref{3}) and the added remarks as principal results, since here is
investigated the question related to the weak solution of the problem that
is studied in Theorem 1.

The main result of this paper is the following uniqueness theorem.

\begin{theorem}
\label{Th_1}Let $\Omega \subset R^{3}$ be a domain of $Lip_{loc}$ (will be
defined below; see, Section \ref{Sec_I.4}), $T>0$ be a number. If given
functions $u_{0}$, $f$ satisfy of conditions $u_{0}\in H^{1/2}\left( \Omega
\right) $, $f\in L^{2}\left( 0,T;H^{1/2}\left( \Omega \right) \right) $ then
weak solution $u\in \mathcal{V}\left( Q^{T}\right) $ of the problem (1.1%
${{}^1}$%
) - (\ref{3}) given by the above mentioned theorem is unique.
\end{theorem}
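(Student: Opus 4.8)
The plan is to suppose $u$ and $\tilde u$ are two weak solutions in $\mathcal V(Q^T)$ arising from the same data $(u_0,f)$ with the extra smoothness $u_0\in H^{1/2}(\Omega)$, $f\in L^2(0,T;H^{1/2}(\Omega))$, and to show $w:=u-\tilde u\equiv 0$. The first step is to write the equation satisfied by $w$: subtracting the weak formulations gives
\begin{equation*}
\frac{d}{dt}\langle w,v\rangle-\nu\langle\Delta w,v\rangle+\Big\langle\sum_j u_j\partial_{x_j}u-\sum_j\tilde u_j\partial_{x_j}\tilde u,\,v\Big\rangle=0
\end{equation*}
for all $v\in V(\Omega)$, with $w(0)=0$. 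The nonlinear difference splits in the usual way as $(w\cdot\nabla)u+(\tilde u\cdot\nabla)w$ (or symmetrically). I would then exploit the auxiliary problems whose solvability and uniqueness are established earlier in the paper: the idea is that the added regularity on $u_0$ and $f$ propagates, via those auxiliary (linear Stokes-type) problems, to extra regularity of $u$ and $\tilde u$ themselves — enough that the bilinear term can be controlled. Concretely, I expect one shows $u,\tilde u$ lie in a space continuously embedded in $L^r(0,T;L^\infty(\Omega))$ or in $L^8(0,T;L^4(\Omega))$-type Serrin-class spaces, which is exactly the classical gap between "weak" and "unique."

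The second step is the energy estimate. Testing the $w$-equation with $v=w$ (justified because $w\in\mathcal V(Q^T)$ and the duality pairing $\langle w',w\rangle=\tfrac12\frac{d}{dt}\|w\|_H^2$ makes sense here), the term $\langle(\tilde u\cdot\nabla)w,w\rangle$ vanishes by the divergence-free condition, leaving
\begin{equation*}
\frac12\frac{d}{dt}\|w(t)\|_{H}^2+\nu\|\nabla w(t)\|_{L^2}^2=-\big\langle(w\cdot\nabla)u,\,w\big\rangle.
\end{equation*}
The right-hand side must be bounded by something of the form $\varepsilon\|\nabla w\|_{L^2}^2+C\,g(t)\|w\|_H^2$ with $g\in L^1(0,T)$; this is where the extra regularity of $u$ enters — e.g. $|\langle(w\cdot\nabla)u,w\rangle|\le\|w\|_{L^2}\|\nabla u\|_{L^2}\|w\|_{L^\infty}$ or a Ladyzhenskaya-type interpolation $\le C\|w\|_{L^2}^{1/2}\|\nabla w\|_{L^2}^{3/2}\|u\|_{?}$, absorbed into the viscous term, leaving $C\|u\|_{?}^4\|w\|_H^2$ integrable in $t$. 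Then Grönwall's inequality with $w(0)=0$ forces $\|w(t)\|_H\equiv 0$.

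The main obstacle is the very first step: justifying that the hypotheses $u_0\in H^{1/2}$, $f\in L^2(0,T;H^{1/2})$ actually buy enough regularity for $u$ to land in a uniqueness (Serrin-type) class. In dimension three this is precisely the open regularity problem, so the argument cannot be the naive one; it must genuinely use the structure of the auxiliary problems set up earlier — presumably a fixed-point or linearization scheme in which the data regularity is preserved and the resulting solution space embeds into $L^r_tL^\infty_x$ (or similar). A secondary technical point is the treatment of the domain: $\Omega$ is only of class $Lip_{loc}$, so I would need the $H^{1/2}$ regularity theory for the Stokes operator on such domains, and I would need to be careful that all the trace and embedding constants used in the Grönwall step are available in this low-regularity geometric setting. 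If those two points go through, the energy–Grönwall conclusion is routine.
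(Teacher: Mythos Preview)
Your proposal takes a fundamentally different route from the paper, and the gap you yourself flag is real and not closed by the paper in the way you anticipate. You are attempting a direct 3D energy estimate, hoping that the auxiliary problems are linear Stokes-type problems whose regularity theory upgrades $u$ into a Serrin class. They are not. The auxiliary problems (3.3)--(3.5) in the paper are the \emph{restrictions of the Navier--Stokes system to two-dimensional cross-sections} $\Omega_L=\Omega\cap L$, where $L$ ranges over hyperplanes in $R^3$. On each such slice the problem is (after rewriting $\partial_{x_3}$ via the affine relation defining $L$) a genuinely two-dimensional Navier--Stokes-type system, for which existence (Theorem~\ref{Th_2.1}) and uniqueness (Theorem~\ref{Th_2.2}) are proved by the classical 2D argument: the Ladyzhenskaya inequality $\|w\|_{L^4}^2\le c\|w\|_{L^2}\|\nabla w\|_{L^2}$ closes the Gr\"onwall estimate without any extra regularity on $u$.

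The role of the hypothesis $u_0\in H^{1/2}(\Omega)$, $f\in L^2(0,T;H^{1/2}(\Omega))$ is therefore not to propagate regularity to $u$; it is purely a \emph{trace} requirement, so that $u_0$ and $f$ have well-defined restrictions to each hyperplane $L$ and the sliced problems make sense. The paper then invokes Lemma~\ref{L_2.2}: if two 3D solutions differed, they would differ on a positive-measure family of 2D slices, contradicting the slice-by-slice uniqueness just established. Your energy approach never enters; the paper does not claim, and does not need, that $u$ itself lies in any Serrin class. Whatever one thinks of the legitimacy of identifying restrictions of a 3D weak solution with solutions of the sliced 2D problems, that identification---not a regularity bootstrap---is the mechanism the paper uses, and it is entirely absent from your outline.
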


This article is organized as follow. In Part I the question is studied under
certain smoothness conditions onto given functions. In Section I.2 some
known results and the explanation of the relation between problems (\ref{1})
- (\ref{3}) and (1.1%
${{}^1}$%
) - (\ref{3}) is adduced, and also the necessary auxiliary results, namely
lemmas are proved. These lemmas are need us for the study of the main
problem. In Section I.3 the auxiliary problems determined that posed on the
cross-sections of $\Omega $, which are obtained from problem (1.1%
${{}^1}$%
) - (\ref{3}). Here is explained how these problems are obtained from
problem (1.1%
${{}^1}$%
) - (\ref{3}), and also is suggested to study the main question for the
auxiliary problems on the cross-sections instead of the investigation of
this question on whole of $\Omega $. In Section I.4 the existence of the
solution and, in Section I.5 the uniqueness of solution of the auxiliary
problem are studied. In Section I.6 the main result Theorem \ref{Th_1} is
proved. In Section II.7 of Part II one conditional result on uniqueness of
weak solution of problem (1.1%
${{}^1}$%
) - (\ref{3}) by use of certain modification of the well-known approach is
proved.

\part{\label{Part I}One new approach for study of the uniqueness}

\section{\label{Sec_I.2}Preliminary results}

In this section the background material, definitions of the appropriate
spaces, that will be used in the next sections are briefly recalled. In
addition, here some notations are introduced, and also the necessary
auxiliary results are proved that in the follow will be employed. Moreover,
we recall the basic setup and results regarding of the weak solutions of the
incompressible Navier--Stokes equations used throughout this paper.

As is well-known, problem (\ref{1}) - (\ref{3}) possesses weak solution in
the space $\mathcal{V}\left( Q^{T}\right) \times L^{2}\left( Q^{T}\right) $
for each $u_{0i}\left( x\right) ,$ $f_{i}(x,t)$ ($i=\overline{1,3}$), which
are contained in the suitable spaces (see, e.g. \cite{Lio1}, \cite{Tem1} and
references therein), (the space $\mathcal{V}\left( Q^{T}\right) $ will be
defined later on). Here our main problem is the investigation of the posed
question in the case $n=3$, consequently, here problems will be studied
mostly in the case $n=3$.

\begin{definition}
\label{D_2.1}Let $\Omega \subset R^{3}$ be an open bounded Lipschitz domain
and $Q^{T}\equiv \left( 0,T\right) \times \Omega $, $T>0$ be a number. Let $%
V\left( Q^{T}\right) $ be the space determined as 
\begin{equation*}
V\left( Q^{T}\right) \equiv L^{2}\left( 0,T;V\left( \Omega \right) \right) \cap L^{\infty }\left( 0,T;H\left( \Omega \right) \right) ,%
\end{equation*}%
where $V\left( \Omega \right) $ and $H\left( \Omega \right) $ are the
closure of \ 
\begin{equation*}
\left\{ \varphi \left\vert \ \varphi \in \left( C_{0}^{\infty }\left( \Omega
\right) \right) ^{3},\right. \func{div}\varphi =0\right\}
\end{equation*}%
in the topology of $\left( W_{0}^{1,2}\left( \Omega \right) \right) ^{3}$
and in the topology of $\left( L^{2}\left( \Omega \right) \right) ^{3}$,
respectively;

the dual $V\left( \Omega \right) $ determined as $V^{\ast }\left( \Omega
\right) $ and is the closure of \ the\ linear continuous functionals defined
on $V\left( \Omega \right) $ in the sense of the Lax dual relative to $%
H\left( \Omega \right) $.

Moreover we set also the space $\mathcal{V}\left( Q^{T}\right) \equiv
V\left( Q^{T}\right) \cap W^{1,4/3}\left( 0,T;V^{\ast }\left( \Omega \right)
\right) $ for $n=3$ (\cite{Tem1}).
\end{definition}

Here as is well-known $L^{2}\left( \Omega \right) $ is the Lebesgue space
and $W^{1,2}\left( \Omega \right) $ is the Sobolev space, that are the
Hilbert spaces and 
\begin{equation*}
W_{0}^{1,2}\left( \Omega \right) \equiv \left\{ v\left\vert \ v\in
W^{1,2}\left( \Omega \right) ,\right. v\left\vert \ _{\partial \Omega
}\right. =0\right\} .
\end{equation*}%
As is well-known in this case $H\left( \Omega \right) $ and $V\left( \Omega
\right) $ also are the Hilbert spaces, therefore

\begin{equation*}
V\left( \Omega \right) \subset H\left( \Omega \right) \equiv H^{\star
}\left( \Omega \right) \subset V^{\ast }\left( \Omega \right) .
\end{equation*}

So, assume the given functions $u_{0}$ and $f$ satisfy 
\begin{equation*}
u_{0}\in H\left( \Omega \right) ,\quad f\in L^{2}\left( 0,T;V^{\ast }\left(
\Omega \right) \right)
\end{equation*}%
where $V^{\ast }\left( \Omega \right) $ is the dual space of $V\left( \Omega
\right) $.

Consider the problem for which the existence of the weak solution directly
connected with the existence of the weak solution of problem (\ref{1}) - (%
\ref{3}) as will be shown below

\begin{equation}
\frac{\partial u_{i}}{\partial t}-\nu \Delta u_{i}+\underset{j=1}{\overset{n}%
{\sum }}u_{j}\frac{\partial u_{i}}{\partial x_{j}}=f_{i}\left( t,x\right)
,\quad i=\overline{1,n},\ \nu >0  \tag{1.1$^{1}$}
\end{equation}%
\begin{equation}
\func{div}u=\underset{i=1}{\overset{n}{\sum }}\frac{\partial u_{i}}{\partial
x_{i}}=\underset{i=1}{\overset{n}{\sum }}D_{i}u_{i}=0,\quad x\in \Omega
\subset R^{n},\ t>0,  \tag{1.2}
\end{equation}%
\begin{equation}
u\left( 0,x\right) =u_{0}\left( x\right) ,\quad x\in \Omega ;\quad
u\left\vert \ _{\left( 0,T\right) \times \partial \Omega }\right. =0. 
\tag{1.3}
\end{equation}%
\footnote{%
Here all equations are needed to understand in the sense of the
corresponding spaces, e.g. the equation (1.1%
${{}^1}$%
) is understood in the sense of the dual space of $\mathcal{V}\left(
Q^{T}\right) $.}

The investigation of the existence of the weak solution of problem (1.1%
${{}^1}$%
) - (\ref{3}) is equivalent to the investigation of the following equation
with corresponding initial condition (see, Theorem \ref{Th_1}) 
\begin{equation}
\frac{d}{dt}\left\langle u,v\right\rangle -\left\langle \nu \Delta
u,v\right\rangle +\left\langle \underset{j=1}{\overset{n}{\sum }}u_{j}\frac{%
\partial u}{\partial x_{j}},v\right\rangle =\left\langle f,v\right\rangle
\label{1b}
\end{equation}%
where $v\in V(%
\Omega
)$ is arbitrary.

In other words, one must study the existence of the weak solution of problem
(1.1%
${{}^1}$%
) - (\ref{3}) in the sense of J. Leray \cite{Ler1} by use of his approach
(see, also \cite{Lio1}, \cite{Tem1}). This approach shows that for study of
the uniqueness of the solution relative to velosity vector $u$ of problem (%
\ref{1}) - (\ref{3}) sufficiently to investigate of same question for
problem (1.1%
${{}^1}$%
) - (\ref{3}) in view of de Rham result (see, books \cite{Lio1}, \cite{Tem1}%
, \cite{Lad1}, \cite{FoiManRosTem}, \cite{Gal}, \cite{MajBer} etc. where the
properties of the this problem were explained enough clearly).

In order to explain that the investigation of the posed question for problem
(1.1%
${{}^1}$%
) - (\ref{3}) is sufficient for our goal we represent here some results of
the book \cite{Tem1}, which have the immediate relation to this problem.

\begin{proposition}
\label{Pr_2.1}(\cite{Tem1}) Let $\Omega $ be a bounded Lipschitz open set in 
$R^{n}$ and $g=\left( g_{1},...,g_{n}\right) $, $g_{i}\in \mathcal{D}%
^{\prime }\left( \Omega \right) $, $1\leq i\leq n$. A necessary and
sufficient condition that $g=\func{grad}p$ for some $p$ in $\mathcal{D}%
^{\prime }\left( \Omega \right) $, is that $\left\langle g,v\right\rangle =0$
$\forall v\in V\left( \Omega \right) $.
\end{proposition}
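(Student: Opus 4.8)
\emph{Overview.} The statement has two halves. Necessity is an elementary integration by parts; sufficiency is a form of de~Rham's theorem, and I would prove it by ``inverting the divergence'' by hand. Throughout, write $\mathcal{V}:=\{\varphi\in(C_0^{\infty}(\Omega))^{n}:\operatorname{div}\varphi=0\}$, so that by Definition~\ref{D_2.1} the space $V(\Omega)$ is the closure of $\mathcal{V}$ in $(W_0^{1,2}(\Omega))^{n}$; the pairing $\langle g,v\rangle$ is primarily meaningful for $v\in\mathcal{V}$, and the hypothesis ``$\langle g,v\rangle=0$ for all $v\in V(\Omega)$'' is understood in this sense.

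\emph{Necessity.} Suppose $g=\operatorname{grad}p$ for some $p\in\mathcal{D}^{\prime}(\Omega)$. For any $v\in\mathcal{V}$, the definition of distributional derivatives gives
\[
\langle g,v\rangle=\sum_{i=1}^{n}\langle\partial_{i}p,v_{i}\rangle=-\langle p,\operatorname{div}v\rangle=0 .
\]
By density of $\mathcal{V}$ in $V(\Omega)$ this persists (where the pairing is defined) for all $v\in V(\Omega)$.

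\emph{Sufficiency.} Assume $\langle g,v\rangle=0$ for all $v\in V(\Omega)$, hence in particular $\langle g,w\rangle=0$ for every $w\in\mathcal{V}$. The analytic input is Bogovski\u{\i}'s lemma: on a bounded Lipschitz domain $\Omega$ the map $\operatorname{div}$ carries $(C_0^{\infty}(\Omega))^{n}$ onto $\{\phi\in C_0^{\infty}(\Omega):\int_{\Omega}\phi\,dx=0\}$ and admits a right inverse that is continuous on the relevant spaces of test functions. Granting this, define $p$ on mean-zero test functions by $\langle p,\operatorname{div}w\rangle:=-\langle g,w\rangle$; this is unambiguous, because if $\operatorname{div}w_{1}=\operatorname{div}w_{2}$ then $w_{1}-w_{2}\in\mathcal{V}$, so the hypothesis forces $\langle g,w_{1}-w_{2}\rangle=0$. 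Fixing $\phi_{0}\in C_0^{\infty}(\Omega)$ with $\int_{\Omega}\phi_{0}\,dx=1$, extend $p$ to all of $C_0^{\infty}(\Omega)$ by $\langle p,\psi\rangle:=\langle p,\psi-(\int_{\Omega}\psi\,dx)\phi_{0}\rangle$; continuity of this functional, i.e. $p\in\mathcal{D}^{\prime}(\Omega)$, follows from continuity of the Bogovski\u{\i} right inverse together with continuity of $g$ on test functions. Finally, for $\psi\in C_0^{\infty}(\Omega)$ and each $i$, apply the definition with $w=\psi e_{i}$ (only the $i$-th component nonzero), for which $\operatorname{div}w=\partial_{i}\psi$ has zero mean:
\[
\langle\partial_{i}p,\psi\rangle=-\langle p,\partial_{i}\psi\rangle=-\langle p,\operatorname{div}(\psi e_{i})\rangle=\langle g,\psi e_{i}\rangle=\langle g_{i},\psi\rangle ,
\]
so $\partial_{i}p=g_{i}$ for all $i$, i.e. $g=\operatorname{grad}p$.

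\emph{Main obstacle.} The only nontrivial point is the surjectivity of $\operatorname{div}$ with a bounded right inverse on a bounded Lipschitz domain (equivalently, Ne\v{c}as' inequality); I would obtain it first on subdomains that are star-shaped with respect to a ball, via Bogovski\u{\i}'s explicit integral operator, and then promote it to $\Omega$ by a finite such covering and a partition of unity, taking care that the mean-zero characterization of divergences uses connectedness of $\Omega$ (otherwise one argues componentwise and $p$ is determined up to a locally constant function). Alternatively, one may invoke de~Rham's theorem directly, as in \cite{Tem1}.
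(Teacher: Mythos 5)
Your proof is correct. Note first that the paper itself offers no proof of this proposition: it is quoted verbatim from Temam's book \cite{Tem1}, where it is established by appealing to de Rham's theorem (equivalently, in later treatments, to the closed-range theorem together with the Ne\v{c}as inequality $\|p\|_{L^2/\mathbb{R}}\le c\|\operatorname{grad}p\|_{H^{-1}}$, which is essentially Proposition \ref{Pr_2.2} of the paper). So there is nothing in the paper to compare against line by line; what you have done is supply an independent, constructive proof of the cited result. Your necessity direction is the standard integration by parts, with the correct caveat that the pairing $\langle g,v\rangle$ is only literally meaningful for $v$ in the smooth divergence-free class, the hypothesis for general $v\in V(\Omega)$ being understood by density. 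Your sufficiency direction replaces de Rham's theorem by Bogovski\u{\i}'s right inverse of the divergence: you define $p$ on exact divergences by $\langle p,\operatorname{div}w\rangle=-\langle g,w\rangle$, check well-definedness from the hypothesis, extend off the mean-zero hyperplane by a fixed $\phi_0$, and verify $\partial_i p=g_i$ by testing with $w=\psi e_i$. All of these steps are sound. The one load-bearing ingredient you defer --- continuity of the Bogovski\u{\i} operator on $\mathcal{D}(\Omega)$ for a bounded Lipschitz domain, obtained from the star-shaped case by a finite covering and partition of unity, with the componentwise/mean-zero adjustment if $\Omega$ is disconnected --- is exactly the right technical point to isolate, and it is a genuine theorem (not folklore) that must be invoked or proved; since $g$ may have arbitrary finite order on compacta, you do need the estimates $\|B\phi\|_{W^{s+1,q}}\le C\|\phi\|_{W^{s,q}}$ for all $s\ge 0$ together with control of supports, not merely the $L^2\to H^1_0$ bound. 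What your route buys over Temam's is an explicit construction of $p$ and a self-contained argument that does not pass through the cohomological machinery of de Rham; what it costs is the Bogovski\u{\i} theory itself, which on Lipschitz domains is comparable in depth to the Ne\v{c}as inequality it replaces.
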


\begin{proposition}
\label{Pr_2.2}(\cite{Tem1}) Let $\Omega $ be a bounded Lipschitz open set in 
$R^{n}$.

(i) If a distribution $p$ has all its first-order derivatives $D_{i}p$, $%
1\leq i\leq n$ in $L^{2}\left( \Omega \right) $, then $p\in L^{2}\left(
\Omega \right) $ and\ 
\begin{equation*}
\left\Vert p\right\Vert _{L^{2}\left( \Omega \right) /R}\leq c\left( \Omega
\right) \left\Vert \func{grad}p\right\Vert _{\left( L^{2}\left( \Omega
\right) \right) ^{3}};
\end{equation*}

(ii) If a distribution $p$ has all its first derivatives $D_{i}p$, $1\leq
i\leq d$ in $H^{-1}\left( \Omega \right) $, then $p\in L^{2}\left( \Omega
\right) $ and 
\begin{equation*}
\left\Vert p\right\Vert _{L^{2}\left( \Omega \right) /R}\leq c\left( \Omega
\right) \left\Vert \func{grad}p\right\Vert _{H^{-1}\left( \Omega \right) }.
\end{equation*}

In both cases, if $\Omega $ is any open set in $R^{n}$, then $p\in
L_{loc}^{2}\left( \Omega \right) $.
\end{proposition}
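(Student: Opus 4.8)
The plan is to reduce both assertions to a single analytic fact about a bounded Lipschitz domain $\Omega\subset R^{n}$: the divergence operator $\func{div}:(H^{1}_{0}(\Omega))^{n}\to L^{2}_{0}(\Omega):=\{q\in L^{2}(\Omega):\int_{\Omega}q\,dx=0\}$ is onto and has a bounded linear right inverse $B$, so that $\func{div}Bq=q$ and $\|Bq\|_{(H^{1}_{0}(\Omega))^{n}}\le c(\Omega)\|q\|_{L^{2}(\Omega)}$; equivalently, the Ne\v{c}as inequality $\|q\|_{L^{2}(\Omega)}\le c(\Omega)\bigl(\|q\|_{H^{-1}(\Omega)}+\|\func{grad}q\|_{(H^{-1}(\Omega))^{n}}\bigr)$ holds for every $q\in L^{2}(\Omega)$. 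I would take this (the classical Ne\v{c}as inequality, equivalently the boundedness of the Bogovskii operator) as given and deduce the proposition from it.

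For part (ii): let $p\in\mathcal{D}'(\Omega)$ with $D_{i}p\in H^{-1}(\Omega)$, $1\le i\le n$. Since $|\langle\func{grad}p,v\rangle|\le\|\func{grad}p\|_{(H^{-1}(\Omega))^{n}}\|v\|_{(H^{1}_{0}(\Omega))^{n}}$, the map $v\mapsto-\langle\func{grad}p,v\rangle$ is a bounded linear functional on $(H^{1}_{0}(\Omega))^{n}$; composing it with $B$ yields a bounded functional on $L^{2}_{0}(\Omega)$, hence, by the Riesz representation theorem, there is a unique $\tilde p\in L^{2}_{0}(\Omega)$ with $\langle\tilde p,\phi\rangle=-\langle\func{grad}p,B\phi\rangle$ for all $\phi\in L^{2}_{0}(\Omega)$ and $\|\tilde p\|_{L^{2}(\Omega)}\le c(\Omega)\|\func{grad}p\|_{(H^{-1}(\Omega))^{n}}$. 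I would then verify that $\func{grad}\tilde p=\func{grad}p$ in $\mathcal{D}'(\Omega)$: for $v\in(C_{0}^{\infty}(\Omega))^{n}$ one has $\func{div}v\in L^{2}_{0}(\Omega)$ and $v-B(\func{div}v)$ is divergence-free in $(H^{1}_{0}(\Omega))^{n}$, so, since $\func{grad}p$ annihilates every divergence-free field there, $\langle\func{grad}\tilde p,v\rangle=-\langle\tilde p,\func{div}v\rangle=\langle\func{grad}p,B(\func{div}v)\rangle=\langle\func{grad}p,v\rangle$. Consequently $p-\tilde p$ is (locally) constant, so $p\in L^{2}(\Omega)$ and $\|p\|_{L^{2}(\Omega)/R}\le\|\tilde p\|_{L^{2}(\Omega)}\le c(\Omega)\|\func{grad}p\|_{H^{-1}(\Omega)}$. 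Part (i) is then immediate, because $\func{grad}p\in(L^{2}(\Omega))^{n}$ forces $\func{grad}p\in(H^{-1}(\Omega))^{n}$ with $\|\func{grad}p\|_{(H^{-1}(\Omega))^{n}}\le c(\Omega)\|\func{grad}p\|_{(L^{2}(\Omega))^{n}}$.

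For the last (local) assertion, with $\Omega$ any open set: on a ball $B\Subset\Omega$ the restriction $p|_{B}$ lies in $H^{-m}(B)$ for some integer $m\ge 1$ (local structure of distributions), while $\func{grad}p\in(H^{-1}(B))^{n}\subset(H^{-m}(B))^{n}$; applying the Ne\v{c}as inequality on the ball $B$ at successively higher negative Sobolev levels raises the regularity of $p$ by one order at each step, until $p\in L^{2}(B)$, whence $p\in L^{2}_{loc}(\Omega)$. The genuinely nontrivial step --- and the one I would isolate at the outset --- is the boundedness of $B$ (equivalently the Ne\v{c}as inequality) on a Lipschitz domain, together with its persistence at every negative Sobolev order on a ball, which the bootstrap above requires: on a ball both follow from the explicit Bogovskii formula, and the passage to a general bounded Lipschitz $\Omega$ goes through flattening $\partial\Omega$ and a partition of unity, the Lipschitz character of the boundary entering only in the constant $c(\Omega)$.
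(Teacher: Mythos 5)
This proposition is quoted from Temam's book and the paper offers no proof of it, so there is no in-paper argument to compare yours against; I assess your proof on its own terms. Your route --- reducing everything to the existence of a bounded right inverse $B$ of $\func{div}:(H_{0}^{1}(\Omega ))^{n}\rightarrow L_{0}^{2}(\Omega )$ (equivalently the Ne\v{c}as inequality) and then producing the representative $\tilde{p}$ by Riesz representation --- is one of the two standard proofs of this result (the other, which is the one in Temam's source, localizes, flattens the boundary and works with difference quotients); your deduction of (i) from (ii) and your bootstrap for the local statement are both sound, given the black box you explicitly isolate.

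There is, however, one step that as written is unjustified and risks circularity. To identify $\func{grad}\tilde{p}$ with $\func{grad}p$ you invoke the claim that $\func{grad}p$ annihilates \emph{every} divergence-free field in $(H_{0}^{1}(\Omega ))^{n}$. For $p$ merely a distribution, the identity $\left\langle \func{grad}p,w\right\rangle =-\left\langle p,\func{div}w\right\rangle $ is available only for test fields $w$; to pass to a general divergence-free $w\in (H_{0}^{1}(\Omega ))^{n}$ you need the density of divergence-free test fields in $\left\{ w\in (H_{0}^{1}(\Omega ))^{n}\ \left\vert \ \func{div}w=0\right. \right\} $, and in Temam's development that density theorem is itself deduced \emph{from} the present proposition together with de Rham's theorem (Proposition \ref{Pr_2.1}), so it cannot be quoted here. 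The fix stays inside your own framework: take $B$ to be the genuine Bogovskii operator, which carries $C_{0}^{\infty }$ functions of zero mean into $(C_{0}^{\infty })^{n}$ fields (on a star-shaped domain by the explicit formula, on a bounded Lipschitz domain after the decomposition you already mention for the boundedness of $B$). Then for $v\in (C_{0}^{\infty }(\Omega ))^{n}$ the field $v-B(\func{div}v)$ is itself a divergence-free \emph{test} field, and $\left\langle \func{grad}p,v-B(\func{div}v)\right\rangle =-\left\langle p,\func{div}\left( v-B(\func{div}v)\right) \right\rangle =0$ directly from the definition of the distributional gradient, with no density statement needed. With that substitution the argument closes.
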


Combining these results, one can note that if $g\in H^{-1}\left( \Omega
\right) $ (or $g\in L^{2}\left( \Omega \right) $) and $(g,v)=0$, then $g=%
\func{grad}p$ with $p\in L^{2}\left( \Omega \right) $ (or $p\in H^{1}\left(
\Omega \right) $) if $\Omega $ is a Lipschitz open bounded set.

\begin{theorem}
\label{Th_1.2}(\cite{Tem1}) Let $\Omega $ be a Lipschitz open bounded set in 
$R^{n}$. Then 
\begin{equation*}
H^{\bot }\left( \Omega \right) =\left\{ w\in \left( L^{2}\left( \Omega
\right) \right) ^{n}\ \left\vert \ w=\func{grad}p,\ p\in H^{1}\left( \Omega
\right) \right. \right\} ;
\end{equation*}%
\begin{equation*}
H\left( \Omega \right) =\left\{ u\in \left( L^{2}\left( \Omega \right)
\right) ^{n}\ \left\vert \func{div}u=0,\ u_{\partial \Omega }\ =0\ \right.
\right\} .
\end{equation*}
\end{theorem}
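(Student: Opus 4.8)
The plan is to establish the two identities by combining the de~Rham characterization (Proposition~\ref{Pr_2.1}), the pressure-regularity estimates (Proposition~\ref{Pr_2.2}), and the Hilbert-space orthogonal decomposition $\left( L^{2}\left( \Omega \right) \right) ^{n}=H\left( \Omega \right) \oplus H^{\bot }\left( \Omega \right) $. I would describe $H^{\bot }\left( \Omega \right) $ first and then read off the description of $H\left( \Omega \right) $ by passing to the orthogonal complement, since $H\left( \Omega \right) =\bigl( H^{\bot }\left( \Omega \right) \bigr) ^{\bot }$ for the closed subspace $H\left( \Omega \right) $.

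\emph{Step 1 (the set $H^{\bot }$).} Since $H\left( \Omega \right) $ is the $\left( L^{2}\right) ^{n}$-closure of the divergence-free test fields $\mathcal{D}_{\sigma }:=\left\{ \varphi \in \left( C_{0}^{\infty }\left( \Omega \right) \right) ^{n}:\operatorname{div}\varphi =0\right\} $, and $V\left( \Omega \right) $ is the closure of the same family in the stronger $\left( W_{0}^{1,2}\right) ^{n}$-topology, orthogonality to $H\left( \Omega \right) $ is equivalent to orthogonality to $\mathcal{D}_{\sigma }$, hence to $V\left( \Omega \right) $. Thus $w\in H^{\bot }\left( \Omega \right) $ iff $w\in \left( L^{2}\left( \Omega \right) \right) ^{n}$ and $\left\langle w,v\right\rangle =0$ for all $v\in V\left( \Omega \right) $. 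Now Proposition~\ref{Pr_2.1} forces $w=\operatorname{grad}p$ for some $p\in \mathcal{D}^{\prime }\left( \Omega \right) $; since $w\in \left( L^{2}\right) ^{n}$, every first derivative $D_{i}p=w_{i}$ lies in $L^{2}\left( \Omega \right) $, so Proposition~\ref{Pr_2.2}(i) gives $p\in L^{2}\left( \Omega \right) $, whence $p\in H^{1}\left( \Omega \right) $. This is the inclusion ``$\subseteq $''. Conversely, if $w=\operatorname{grad}p$ with $p\in H^{1}\left( \Omega \right) $, then for every $\varphi \in \mathcal{D}_{\sigma }$ integration by parts yields
\[
\left\langle \operatorname{grad}p,\varphi \right\rangle =-\left\langle p,\operatorname{div}\varphi \right\rangle =0,
\]
the boundary term vanishing because $\varphi $ has compact support; by density $\left\langle w,v\right\rangle =0$ for all $v\in H\left( \Omega \right) $, so $w\in H^{\bot }\left( \Omega \right) $. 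This proves the first identity.

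\emph{Step 2 (the set $H$).} With $H^{\bot }\left( \Omega \right) =\left\{ \operatorname{grad}p:p\in H^{1}\left( \Omega \right) \right\} $ in hand, I would compute $H\left( \Omega \right) =\bigl( H^{\bot }\left( \Omega \right) \bigr) ^{\bot }$ directly: $u\in H\left( \Omega \right) $ iff $\left\langle u,\operatorname{grad}p\right\rangle =0$ for every $p\in H^{1}\left( \Omega \right) $. The generalized Green formula on the Lipschitz domain,
\[
\left\langle u,\operatorname{grad}p\right\rangle =-\left\langle \operatorname{div}u,p\right\rangle +\left\langle u\cdot \nu ,p\right\rangle _{\partial \Omega },
\]
valid for $u$ in $\left\{ u\in \left( L^{2}\left( \Omega \right) \right) ^{n}:\operatorname{div}u\in L^{2}\left( \Omega \right) \right\} $ with normal trace $u\cdot \nu \in H^{-1/2}\left( \partial \Omega \right) $, shows that the vanishing of $\left\langle u,\operatorname{grad}p\right\rangle $ for all $p\in H^{1}\left( \Omega \right) $ is equivalent to $\operatorname{div}u=0$ (test against $p\in C_{0}^{\infty }\left( \Omega \right) $) together with $u\cdot \nu |_{\partial \Omega }=0$ (test against $p$ realizing arbitrary boundary traces). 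This is exactly the stated description, with $u_{\partial \Omega }$ read as the normal trace $u\cdot \nu $.

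\emph{Main obstacle.} The functional-analytic core is already supplied by Propositions~\ref{Pr_2.1} and~\ref{Pr_2.2}, so the genuine difficulty is the rigorous treatment of the boundary condition for merely $L^{2}$ vector fields on a Lipschitz domain: one must construct the normal trace operator $u\mapsto u\cdot \nu |_{\partial \Omega }$ on $\left\{ u\in \left( L^{2}\right) ^{n}:\operatorname{div}u\in L^{2}\right\} $, prove it is continuous into $H^{-1/2}\left( \partial \Omega \right) $ and that the pairing $\left\langle u\cdot \nu ,\gamma _{0}p\right\rangle _{\partial \Omega }$ detects any nonzero normal trace, and verify the Green formula above; the density of $\mathcal{D}_{\sigma }$ used in Step~1 likewise rests on the Lipschitz regularity of $\partial \Omega $. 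I would isolate this trace and Green-formula machinery as the key lemma and then regard Steps~1 and~2 as formal consequences of it together with the two cited propositions.
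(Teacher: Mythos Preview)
The paper does not give its own proof of this theorem: it is quoted verbatim from Temam \cite{Tem1} (as the label ``(\cite{Tem1})'' indicates), and the surrounding text only records the consequence that a field in $H^{-1}$ or $L^{2}$ annihilating $V(\Omega)$ is a gradient with the corresponding regularity. So there is no in-paper argument to compare your proposal against.

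Your outline is correct and is essentially the standard proof one finds in Temam: Step~1 is exactly the combination of Propositions~\ref{Pr_2.1} and~\ref{Pr_2.2}(i) that the paper alludes to in the sentence preceding the theorem, and Step~2 is the usual passage through the normal-trace Green formula on the space $E(\Omega)=\{u\in (L^{2})^{n}:\operatorname{div}u\in L^{2}\}$. Your identification of the main technical input---the construction and surjectivity properties of the normal trace $u\mapsto u\cdot\nu\in H^{-1/2}(\partial\Omega)$ on a Lipschitz domain, together with the associated Green identity---is accurate; in Temam this is set up as a separate lemma before the decomposition theorem, exactly as you propose. One minor clarification: in Step~2, to conclude $\operatorname{div}u=0$ from $\langle u,\operatorname{grad}p\rangle=0$ for $p\in C_{0}^{\infty}(\Omega)$ you are using that $\operatorname{div}u$, a priori only a distribution, pairs to zero with every test function and hence vanishes; only then does $u\in E(\Omega)$ and the Green formula become available to extract the boundary condition.
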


\begin{lemma}
\label{L_1.1}(see, e.g. \cite{Lio1}, \cite{Tem1} and also,\cite{Sol2}, \cite%
{Sol4} ) Let $B_{0},B,B_{1}$ be three Banach spaces, each space continuously
included in the following one $B_{0}\subset B\subset B_{1}$ and $B_{0},B_{1}$
are reflexive, moreover, the inclusion $B_{0}\subset B$ is compacts.

Let $X$ be 
\begin{equation*}
X\equiv \left\{ u\left\vert \ u\in L^{p_{0}}(0,T;B_{0}),\right. u\prime \in
L^{p_{1}}(0,T;B_{1})\right\} ,
\end{equation*}%
where $1<p_{j}<\infty $, $j=0,1$ and $0<T<\infty $. Hence $X$\ is Banach
space with the norm 
\begin{equation*}
\left\Vert u\right\Vert _{X}=\left\Vert u\right\Vert
_{L^{p_{0}}(0,T;B_{0})}+\left\Vert u\right\Vert _{L^{p_{1}}(0,T;B_{1})}.
\end{equation*}%
Then under these conditons the inclusion $X\subset L^{p_{0}}(0,T;B)$ is
compact.

Moreover, the inclusion $X\subset C(0,T;B_{1})$ holds, due of Lebesgue
theorem.
\end{lemma}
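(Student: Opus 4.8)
The plan is to prove the classical Aubin--Lions--Simon compactness lemma in three stages: first an interpolation (Ehrling-type) inequality trading the intermediate norm of $B$ against $B_{0}$ and $B_{1}$; then a reduction of the compactness of $X\subset L^{p_{0}}(0,T;B)$ to a strong convergence statement in $L^{p_{0}}(0,T;B_{1})$; and finally an Arzel\`a--Ascoli argument in $C([0,T];B_{1})$ delivering that strong convergence. The essential structural input is that $B_{0}\hookrightarrow B$ is compact, so that composed with the continuous embedding $B\hookrightarrow B_{1}$ the embedding $B_{0}\hookrightarrow B_{1}$ is compact as well, while the reflexivity of $B_{0}$ and $B_{1}$ (together with $1<p_{j}<\infty$) guarantees weak compactness in the reflexive spaces $L^{p_{0}}(0,T;B_{0})$ and $L^{p_{1}}(0,T;B_{1})$.

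First I would record the interpolation inequality: for every $\eta>0$ there is $C_{\eta}>0$ with
\[
\|v\|_{B}\le \eta\|v\|_{B_{0}}+C_{\eta}\|v\|_{B_{1}},\qquad v\in B_{0}.
\]
This follows by contradiction, normalizing a counterexample sequence so that $\|v_{n}\|_{B}=1$: then $\{v_{n}\}$ is bounded in $B_{0}$, hence (by compactness) converges in $B$, while $\|v_{n}\|_{B_{1}}\to0$ forces the limit to vanish, contradicting $\|v_{n}\|_{B}=1$. Raising to the power $p_{0}$ and integrating over $(0,T)$ gives, for $u\in X$,
\[
\|u\|_{L^{p_{0}}(0,T;B)}\le \eta\,\|u\|_{L^{p_{0}}(0,T;B_{0})}+C_{\eta}\,\|u\|_{L^{p_{0}}(0,T;B_{1})}.
\]
Next comes the reduction. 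Let $\{u_{n}\}$ be bounded in $X$. By reflexivity of $L^{p_{0}}(0,T;B_{0})$ and $L^{p_{1}}(0,T;B_{1})$, a subsequence satisfies $u_{n}\rightharpoonup u$ in $L^{p_{0}}(0,T;B_{0})$ and $u_{n}'\rightharpoonup u'$ in $L^{p_{1}}(0,T;B_{1})$ (that the weak limit of the derivatives is the derivative of the weak limit is checked by pairing with $C_{c}^{\infty}(0,T)$ scalar test functions and functionals in $B_{1}^{\ast}$), so $u\in X$. Setting $w_{n}=u_{n}-u$ yields $w_{n}\rightharpoonup0$ in $L^{p_{0}}(0,T;B_{0})$, $w_{n}'\rightharpoonup0$ in $L^{p_{1}}(0,T;B_{1})$, with $\{w_{n}\}$ bounded in $X$ by some $M$. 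By the displayed interpolation inequality it then suffices to produce a further subsequence with $w_{n}\to0$ strongly in $L^{p_{0}}(0,T;B_{1})$: the inequality gives $\limsup_{n}\|w_{n}\|_{L^{p_{0}}(0,T;B)}\le\eta M$ for every $\eta>0$, whence $w_{n}\to0$ in $L^{p_{0}}(0,T;B)$ and $u_{n}\to u$ there.

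The heart of the argument, and the step I expect to be the main obstacle, is the strong convergence $w_{n}\to0$ in $L^{p_{0}}(0,T;B_{1})$, which I would obtain from Arzel\`a--Ascoli in $C([0,T];B_{1})$. Equicontinuity is immediate:
\[
\|w_{n}(t)-w_{n}(s)\|_{B_{1}}\le\int_{s}^{t}\|w_{n}'(\tau)\|_{B_{1}}\,d\tau\le|t-s|^{\,1-1/p_{1}}\,\|w_{n}'\|_{L^{p_{1}}(0,T;B_{1})},
\]
and $1-1/p_{1}>0$. Pointwise relative compactness is delicate, since $\|w_{n}\|_{B_{0}}$ is controlled only in the integral sense, not at a fixed time; to handle it I would use, for small $\delta>0$ and $t\in(0,T-\delta)$ (with a symmetric modification near $t=T$), the averaging identity
\[
w_{n}(t)=\frac{1}{\delta}\int_{t}^{t+\delta}w_{n}(s)\,ds+\frac{1}{\delta}\int_{t}^{t+\delta}\bigl(w_{n}(t)-w_{n}(s)\bigr)\,ds,
\]
in which the last term has $B_{1}$-norm $\le C\delta^{\,1-1/p_{1}}$ uniformly in $n$, while H\"older's inequality gives $\bigl\|\tfrac{1}{\delta}\int_{t}^{t+\delta}w_{n}(s)\,ds\bigr\|_{B_{0}}\le M\delta^{-1/p_{0}}$, so for each fixed $\delta$ the averages lie in a fixed bounded subset of $B_{0}$, hence in a relatively compact subset of $B_{1}$. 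Letting $\delta\to0$ shows $\{w_{n}(t)\}$ is totally bounded, hence relatively compact, in $B_{1}$. Arzel\`a--Ascoli then gives a subsequence converging in $C([0,T];B_{1})$, with limit $0$ because $w_{n}\rightharpoonup0$ in $L^{p_{0}}(0,T;B_{1})$ and weak limits are unique; since $(0,T)$ has finite measure this yields convergence in $L^{p_{0}}(0,T;B_{1})$, completing the compactness claim.

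For the final assertion $X\subset C([0,T];B_{1})$, I would note that $u'\in L^{p_{1}}(0,T;B_{1})\subset L^{1}(0,T;B_{1})$ because $T<\infty$, so after modification on a null set $u(t)=u(s)+\int_{s}^{t}u'(\tau)\,d\tau$; continuity is then the absolute continuity of the Lebesgue integral (the Lebesgue theorem referred to), and averaging over $s\in(0,T)$ gives $\|u\|_{C([0,T];B_{1})}\le C\|u\|_{X}$, so the inclusion is continuous.
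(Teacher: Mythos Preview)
Your proof is correct and follows the standard Aubin--Lions--Simon route (Ehrling inequality, weak compactness from reflexivity, then an Arzel\`a--Ascoli argument in $C([0,T];B_{1})$ via time-averaging to get pointwise relative compactness). One small presentational point: you invoke the representation $w_{n}(t)-w_{n}(s)=\int_{s}^{t}w_{n}'(\tau)\,d\tau$ in the equicontinuity and averaging steps before establishing $X\subset C([0,T];B_{1})$; logically that continuity/absolute-continuity fact should be recorded first, but you do supply it in your final paragraph, so no gap remains.

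As for comparison with the paper: there is nothing to compare. The paper does not prove this lemma; it merely states it with references to Lions, Temam, and the author's earlier work, and then uses it as a black box in the Faedo--Galerkin compactness steps. Your write-up therefore goes well beyond what the paper itself provides.
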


Consequently, if one will seek of weak solution of the problem (\ref{1}) - (%
\ref{3}) by according Hopf-Leray then one can get the following equation 
\begin{equation}
\frac{d}{dt}\left\langle u,v\right\rangle -\left\langle \nu \Delta
u,v\right\rangle +\left\langle \underset{j=1}{\overset{n}{\sum }}u_{j}\frac{%
\partial u}{\partial x_{j}},v\right\rangle =\left\langle f,v\right\rangle
-\left\langle \nabla p,v\right\rangle ,  \label{2.1}
\end{equation}%
where $v\in V(%
\Omega
)$ is arbitrary.

So, if to consider of the last adding in the right side then at illumination
of above results (Propositions \ref{Pr_2.1}, \ref{Pr_2.2} and Theorem \ref%
{Th_1.2}) using the integration by parts and taking into account that $v\in
V(%
\Omega
)$, i.e. $\func{div}v=0$ and $v\left\vert \ _{\left( 0,T\right) \times
\partial \Omega }\right. =0$ we will get the equation 
\begin{equation}
\left\langle \nabla p,v\right\rangle \equiv \underset{\Omega }{\int }\nabla
p\cdot v\ dx=\underset{\Omega }{\int }p\func{div}v\ dx=0,\quad \forall v\in
V(%
\Omega
)  \label{2.2}
\end{equation}%
by virtue of de Rham result. Consequently, taking into account (\ref{2.2})
in (\ref{2.1}) we obtain equation (\ref{1b}) that shows why for study of the
posed question is enough to study this question for problem (1.1%
${{}^1}$%
) - (\ref{3}).

So, we can continue the investigation of the posed question for problem (1.1%
${{}^1}$%
) - (\ref{3}) in the case $n=3$.

Let $\Omega \subset R^{3}$ be a bounded open domain with the boundary $%
\partial \Omega $ of the Lipschitz class. We will denote by $H^{1/2}\left(
\Omega \right) $ the vector space defined as in Definition \ref{D_2.1} by 
\begin{equation*}
\left( W^{1/2}\left( \Omega \right) \right) ^{3}\equiv \left\{ w\left\vert \
w_{i}\in W^{1/2}\left( \Omega \right) ,\right. i=1,2,3\right\} ,\quad
w=\left( w_{1},w_{2},w_{3}\right)
\end{equation*}%
where $W^{1/2,2}\left( \Omega \right) $ is the Sobolev-Slobodeskij space
(see, \cite{LioMag}, etc.). As well-known the trace for the function of the
space $H^{1/2}\left( \Omega \right) $ is definite for each smooth surface
from $\Omega $ (see, e.g. \cite{LioMag}, \cite{BesIlNik} and references
therein), which is necessary for application of our approach to the
considered problem. The main theorem will be proved under this additional
condition that is the sufficient condition for present investigation.

\begin{definition}
\label{D_2.2}A $u\in \mathcal{V}\left( Q^{T}\right) $ is called a solution
of problem (1.1%
${{}^1}$%
) - (\ref{3}) if $u\left( t\right) $ almost everywhere in $\left( 0,T\right) 
$ satisfies the following equation 
\begin{equation}
\frac{d}{dt}\left\langle u,v\right\rangle -\left\langle \nu \Delta
u,v\right\rangle +\left\langle \underset{j=1}{\overset{n}{\sum }}%
u_{j}D_{j}u,v\right\rangle =\left\langle f,v\right\rangle  \label{2.3a}
\end{equation}%
for any $v\in V\left( \Omega \right) $ and $u\left( t\right) $\ is weakly
continuous from $\left[ 0,T\right] $ into $H\left( \Omega \right) $ (i.e. $%
\forall v\in H\left( \Omega \right) $, $t\longrightarrow \left\langle
u\left( t\right) ,v\right\rangle $ is a continuous scalar function, and
consequently, $\left\langle u\left( 0\right) ,v\right\rangle =\left\langle
u_{0},v\right\rangle $).
\end{definition}

In what follows we will understand of an existing solutions be functions
that satisfy this definition together with the standard notations that are
used usually. Moreover, as above were noted if $\Omega $ be a Lipschitz open
bounded set in $R^{3}$, functions $f$ and $u_{0}$ satisfy $f\in L^{2}\left(
0,T;V^{\ast }\left( \Omega \right) \right) $ and $u_{0}\in H\left( \Omega
\right) $, respectively, then the vector function $u$ is the solution of
problem (1.1%
${{}^1}$%
) - (\ref{3}) if it satisfies of conditions of Definition \ref{D_2.2}, in
addition, $u\in L^{\infty }\left( 0,T;H\left( \Omega \right) \right) $ and
the term $\underset{j=1}{\overset{3}{\sum }}u_{j}D_{j}u\equiv B\left(
u\right) $ belong to $L^{4/3}\left( 0,T;V^{\ast }\left( \Omega \right)
\right) $.

Now we go over into main question: let problem (1.1%
${{}^1}$%
) - (\ref{3}) have two different solutions $u,v\in \mathcal{V}\left(
Q^{T}\right) $ then within the known approach we derive that the function $%
w(t,x)=u(t,x)-v(t,x)$ must satisfies the following problem 
\begin{equation}
\frac{1}{2}\frac{\partial }{\partial t}\left\Vert w\right\Vert _{2}^{2}+\nu \left\Vert \nabla w\right\Vert _{2}^{2}+\underset{j,k=1}{\overset{3}{\sum }}\left\langle \frac{\partial v_{k}}{\partial x_{j}}w_{k},w_{j}\right\rangle =0,
\label{2.3}
\end{equation}%
\begin{equation}
w\left( 0,x\right) \equiv w_{0}\left( x\right) =0,\quad x\in \Omega ;\quad w\left\vert \ _{\left( 0,T\right) \times \partial \Omega }\right. =0,
\label{2.4}
\end{equation}%
where $\left\langle g,h\right\rangle =\underset{i=1}{\overset{3}{\sum }}%
\underset{\Omega }{\int }g_{i}h_{i}dx$ for any $g,h\in \left( H\left( \Omega
\right) \right) ^{3}$, or $g\in V\left( \Omega \right) $ and $h\in V^{\ast
}\left( \Omega \right) $, respectively. So, for the proof of the uniqueness
of solution it is follows to show that $w\equiv 0$ in the sense of needed
space.

Later in this section one will studied questions and provided certain
results that are necessary for employing of the basic approach to study of
the requered question. More exactly, these reasonings and results will be
used in sections 3-6 for study of the posed question.

As our purpose is the investigation of the uniqueness of solution of problem
(1.1%
${{}^1}$%
) - (\ref{3}) therefore we will go over to the discussion of this question.
As is known, problem (1.1%
${{}^1}$%
) - (\ref{3}) has weak solution $u\left( t\right) $ from the space $\mathcal{%
V}\left( Q^{T}\right) $ denoted in Definition \ref{D_2.1}, which possesses
of the above mentioned properties and also some complementary properties of
the smoothness type (see, \cite{Ler1}, \cite{Lad2}, \cite{Lio1}, \cite%
{CafKohNir}, \cite{ConKukVic}, \cite{FoiManRosTem}, \cite{ChL-RiMay}).
Therefore we will conduct our study under the condition that problem (1.1%
${{}^1}$%
) - (\ref{3}) have weak solutions and they belong to $\mathcal{V}\left(
Q^{T}\right) $. For the study of the uniqueness of solution of problem (1.1%
${{}^1}$%
) - (\ref{3})) as above assume that problem (1.1%
${{}^1}$%
) - (\ref{3})) has, at least, two different solutions $u,v\in \mathcal{V}%
\left( Q^{T}\right) $. But for demonstrate that this isn%
\'{}%
t possible we will employ a different procedure.

Consequently, if we assume that problem (1.1%
${{}^1}$%
) - (\ref{3}) have two different solutions then they need to be different at
least on some subdomain $Q_{1}^{T}$ of $Q^{T}$. In other words there exists
a subdomain $\Omega _{1}$ of $\Omega $ and an interval $\left(
t_{1},t_{2}\right) \subseteq \left( 0,T\right] $ such that 
\begin{equation*}
Q_{1}^{T}\subseteq \left( t_{1},t_{2}\right) \times \Omega _{1}\subseteq
Q^{T}
\end{equation*}%
with $mes_{4}\left( Q_{1}^{T}\right) >0$ for which 
\begin{equation}
mes_{4}\left( \left\{ (t,x)\in Q^{T}\left\vert \ \left\vert u(t,x)-v(t,x)\right\vert \right. >0\right\} \right) =mes_{4}\left( Q_{1}^{T}\right) >0
\label{2.5}
\end{equation}%
holds, where $mes_{4}\left( Q_{1}^{T}\right) $ denote the measure of $%
Q_{1}^{T}$ in $R^{4}$ (i.e. $mes_{k}$ denote the Lebesgue measure on $k$
dimensional space $R^{n}$). Whence follows, that subdomain $\Omega _{1}$
must have of the positive Lebesgue measure, i.e. $mes_{3}(\Omega _{1})>0$.

The following lemmas will proved even though for $n>1$, but mostly these
will use for the case $n=4$.

So, it is need to prove the following lemmas, which will use later on.

\begin{lemma}
\label{L_2.1}Let $G\subset R^{n}$ be Lebesgue measurable subset then the
following statements are equivalent:

1) $\infty >mes_{n}\left( G\right) >0;$

2) there exist a subsets $I\subset R^{1}$, $\infty >mes_{1}\left( I\right)
>0 $ and $G_{\beta }\subset L_{\beta ,n-1}$, $\infty >mes_{n-1}\left(
G_{\beta }\right) >0$ such that $G=\underset{\beta \in I}{\cup }G_{\beta
}\cup N$, where $N$ is a set with $mes_{n-1}\left( N\right) =0$, and $%
L_{\beta ,n-1}$ is the hyperplane of $R^{n}$, with $\ \ \ co\dim
_{n}L_{\beta ,n-1}=1$, for any $\beta \in I$, which is generated by the
arbitrary fixed vector $y_{0}\in R^{n}$ and defined as follow 
\begin{equation*}
L_{\beta ,n-1}\equiv \left\{ y\in R^{n}\left\vert \ \left\langle
y_{0},y\right\rangle =\beta \right. \right\} ,\quad \forall \beta \in I.
\end{equation*}
\end{lemma}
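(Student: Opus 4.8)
The plan is to prove this as an application of Fubini's theorem after a linear change of coordinates. First I would reduce to the case $y_0 = e_n$, the last standard basis vector: since $y_0 \in R^n$ is a fixed nonzero vector, one can choose an orthogonal matrix $Q$ with $Q^T y_0 = |y_0| e_n$; because orthogonal (more generally, invertible affine) maps preserve the Lebesgue measure class in every dimension and carry hyperplanes to hyperplanes, it suffices to prove the equivalence for the family of hyperplanes $L_{\beta,n-1} = \{ y : y_n = \beta \}$. With this normalization, the slices are just $G_\beta = \{ x' \in R^{n-1} : (x',\beta) \in G \}$, and $mes_{n-1}$ on $L_{\beta,n-1}$ is ordinary $(n-1)$-dimensional Lebesgue measure under the identification $L_{\beta,n-1} \cong R^{n-1}$.

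The implication $2) \Rightarrow 1)$ is the easy direction: if $G = \bigcup_{\beta \in I} G_\beta \cup N$ with $mes_1(I) > 0$ and each slice $G_\beta$ of positive $(n-1)$-measure, then for $x_n \in I$ the section of $G$ at height $x_n$ contains $G_{x_n}$, so $mes_{n-1}$ of that section is positive; Fubini (applied to the characteristic function $\chi_G$, using that $G$ is measurable) then gives $mes_n(G) = \int_R mes_{n-1}(G \cap \{x_n = t\})\, dt \ge \int_I mes_{n-1}(G_t)\, dt > 0$, and finiteness is inherited from the hypothesis. For $1) \Rightarrow 2)$, apply Fubini in the other direction: since $0 < mes_n(G) < \infty$, the function $t \mapsto g(t) := mes_{n-1}(G \cap \{x_n = t\})$ is measurable, integrable, and has strictly positive integral, hence the set $I := \{ t \in R : g(t) > 0 \}$ has $mes_1(I) > 0$; moreover one can further intersect $I$ with $\{ t : g(t) < \infty\}$ (a co-null set in $I$ since $g$ is integrable) and with a bounded interval on which $\int g > 0$, so that $mes_1(I) < \infty$ as well. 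Setting $G_\beta := G \cap \{ x_n = \beta\}$ for $\beta \in I$ gives slices with $0 < mes_{n-1}(G_\beta) < \infty$, and $N := G \setminus \bigcup_{\beta \in I} G_\beta = G \cap \{ x_n \notin I\}$ satisfies $g(t) = 0$ for a.e.\ $t$ outside $I$ (by definition of $I$ up to a null set), so $N$ is Lebesgue-null in $R^n$; since the statement only requires $mes_{n-1}(N) = 0$ in the sense that $N$ meets each relevant hyperplane in a null set, this is immediate — for $t \notin I$, $N \cap \{x_n = t\} = G \cap \{x_n = t\}$ has $(n-1)$-measure zero.

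The one point requiring a little care — and the step I expect to be the main obstacle — is making the decomposition statement in part 2) precise enough to be both provable and usable: the union $\bigcup_{\beta \in I} G_\beta$ is an uncountable union, so one must be careful that measurability is not an issue (it is not, since $G_\beta = G \cap L_{\beta,n-1}$ is literally a section of the fixed measurable set $G$, not an arbitrary choice), and one must pin down in what sense "$mes_{n-1}(N) = 0$" is meant, since $N$ as a subset of $R^n$ automatically has $mes_n(N) = 0$ once we throw away the null-set of bad heights. I would therefore state and use it as: $N$ is contained in the union of the hyperplanes $L_{t,n-1}$ over $t \notin I$, and on each such hyperplane $N$ has vanishing $(n-1)$-dimensional measure — which is exactly what the later application (passing from positive $4$-measure of the set where $u \ne v$ to a positive $3$-measure cross-section) needs. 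Everything else is a routine invocation of Fubini's theorem and of the invariance of Lebesgue measure under rigid motions, so I would keep those parts brief.
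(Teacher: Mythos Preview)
Your argument is correct and is, at bottom, the same slicing idea the paper uses, but you package it more cleanly. The paper proves $1)\Rightarrow 2)$ by contradiction: it supposes the set $I_{1,0}$ of heights with positive $(n-1)$-dimensional slice has $mes_1(I_{1,0})=0$ and concludes $mes_n(G)=0$; you instead invoke Fubini directly on $\chi_G$, define $g(t)=mes_{n-1}(G\cap\{x_n=t\})$, and read off that $\{g>0\}$ has positive measure. You also add two steps the paper leaves implicit: the orthogonal change of variables reducing an arbitrary $y_0$ to $e_n$, and a discussion of what ``$mes_{n-1}(N)=0$'' should mean for a subset $N\subset R^n$ (you interpret it slice-wise, which is exactly what the downstream application in Lemma~\ref{L_2.2} needs). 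The paper's version is terser and slightly informal on these points; your version makes the measurability and Fubini hypotheses explicit, which is a net gain in rigor at no real cost.
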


\begin{proof}
Let $mes_{d}\left( G\right) >0$ and consider the class of hyperplanes $%
L_{\gamma ,n-1}$ for which $G\cap L_{\gamma ,n-1}\neq \varnothing $ and $%
\gamma \in I_{1}$, where $I_{1}\subset R^{1}$\ be some subset. It is clear
that 
\begin{equation*}
G\equiv \underset{\gamma \in I_{1}}{\bigcup }\left\{ x\in G\cap L_{\gamma
,n-1}\left\vert \ \gamma \in I_{1}\right. \right\} .
\end{equation*}%
Then there exists a subclass of hyperplanes $\left\{ L_{\gamma
,n-1}\left\vert \ \gamma \in I_{1}\right. \right\} $ for which

$mes_{n-1}\left( G\cap L_{\gamma ,n-1}\right) >0$ is fulfilled. The number
of such type hyperplanes cannot be less than countable or equal it because $%
mes_{n}\left( G\right) >0$, moreover this subclass of $I_{1}$\ must possess
the $R^{1}$ measure greater than $0$ since $mes_{n}\left( G\right) >0$.
Indeed, let $I_{1,0}$ be this subclass and $mes_{1}\left( I_{1,0}\right) =0$%
. In this case we get subset 
\begin{equation*}
\left\{ \left( \gamma ,y\right) \in I_{1,0}\times G\cap L_{\gamma
,n-1}\left\vert \ \gamma \in I_{1,0},y\in G\cap L_{\gamma ,n-1}\right.
\right\} \subset R^{n}
\end{equation*}%
where $mes_{n-1}\left( G\cap L_{\gamma ,n-1}\right) >0$ for all $\gamma \in
I_{1,0}$, but $mes_{1}\left( I_{1,0}\right) =0$, then 
\begin{equation*}
mes_{n}\left( \left\{ \left( \gamma ,y\right) \in I_{1,0}\times G\cap
L_{\gamma ,n-1}\left\vert \ \gamma \in I_{1,0}\right. \right\} \right) =0.
\end{equation*}%
On the other hand if $mes_{n-1}\left( G\cap L_{\gamma ,n-1}\right) =0$ for
all $\gamma \in I_{1}-I_{1,0}$ then 
\begin{equation*}
mes_{n}\left( \left\{ \left( \gamma ,y\right) \in I_{1}\times G\cap
L_{\gamma ,n-1}\left\vert \ \gamma \in I_{1}\right. \right\} \right) =0,
\end{equation*}%
whence follows 
\begin{equation*}
mes_{n}\left( G\right) =mes_{n}\left( \left\{ \left( \gamma ,y\right) \in
I_{1}\times G\cap L_{\gamma ,n-1}\left\vert \ \gamma \in I_{1}\right.
\right\} \right) =0.
\end{equation*}

But this contradicts the condition $mes_{n}\left( G\right) >0$.
Consequently, the statement 2 holds.

Let the statement 2 holds. It is clear that the class of hyperplanes $%
L_{\beta ,n-1}$ defined by such way are parallel and also we can define the
class of subsets of $G$ as its cross-section with hyperplanes, i.e. in the
form: $G_{\beta }\equiv G\cap L_{\beta ,n-1}$, \ \ $\beta \in I$. Then $%
G_{\beta }\neq \varnothing $ and we can write $G_{\beta }\equiv G\cap
L_{\beta ,n-1}$, $\beta \in I$, moreover $G\equiv \underset{\beta \in I}{%
\bigcup }\left\{ x\in G\cap L_{\beta ,n-1}\left\vert \ \beta \in I\right.
\right\} \cup N$. Whence we get 
\begin{equation*}
G\equiv \left\{ \left( \beta ,x\right) \in I\times G\cap L_{\beta
,n-1}\left\vert \ \beta \in I,x\in G\cap L_{\beta ,n-1}\right. \right\} \cup
N.
\end{equation*}

Consequently, $mes_{n}\left( G\right) >0$ by virtue of conditions: $%
mes_{1}\left( I\right) >0$ and

$mes_{n-1}\left( G_{\beta }\right) >0$ for any $\beta \in I$.
\end{proof}

Lemma \ref{L_2.1} shows that for the study of the measure of some subset $%
\Omega
\subseteq R^{n}$ it is enough to study its stratifications by a class of
corresponding hyperplanes.

\begin{lemma}
\label{L_2.2}Let problem (1.1%
${{}^1}$%
) - (\ref{3}) has, at least, two different solutions $u,v$ that are
contained in $\mathcal{V}\left( Q^{T}\right) $ and assume that $%
Q_{1}^{T}\subseteq Q^{T}$ is one of a subdomain of $Q^{T}$ where $u$ and $v$
are different. Then there exists, at least, one class of parallel
hyperplanes $L_{\alpha }$, $\alpha \in I\subseteq \left( \alpha _{1},\alpha
_{2}\right) \subset R^{1}$ ($\alpha _{2}>\alpha _{1}$)\ with $co\dim
_{R^{3}}L_{\alpha }=1$ such, that $u\neq v$ on $Q_{L_{\alpha }}^{T}\equiv %
\left[ \left( 0,T\right) \times \left( \Omega \cap L_{\alpha }\right) \right]
\cap Q_{1}^{T}$, and vice versa, here $mes_{1}\left( I\right) >0$, $%
mes_{2}\left( \Omega \cap L_{\alpha }\right) >0$ and $L_{\alpha }$ are
hyperplanes which are defined as follows: there is vector $x_{0}\in
S_{1}^{R^{3}}\left( 0\right) $ such that 
\begin{equation*}
L_{\alpha }\equiv \left\{ x\in R^{3}\left\vert \ \left\langle
x_{0},x\right\rangle =\alpha ,\right. \ \forall \alpha \in I\right\} .
\end{equation*}
\end{lemma}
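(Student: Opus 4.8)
The plan is to obtain the lemma as a direct application of Lemma \ref{L_2.1} with $n=4$ to the ``difference set''
\begin{equation*}
G\equiv\left\{\,(t,x)\in Q_{1}^{T}\ \left\vert\ u(t,x)\neq v(t,x)\,\right.\right\},
\end{equation*}
which, by (\ref{2.5}), satisfies $\infty>mes_{4}(G)=mes_{4}(Q_{1}^{T})>0$; in particular $G$ and $Q_{1}^{T}$ agree up to a set of $mes_{4}$-measure zero. First I fix an arbitrary $x_{0}\in S_{1}^{R^{3}}(0)$ and apply Lemma \ref{L_2.1} to $G$ with generating vector $y_{0}=(0,x_{0})\in R^{4}$ (zero time component). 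The key observation is that the hyperplanes of $R^{4}$ generated by this $y_{0}$ are precisely the space--time cylinders over the $2$-dimensional spatial planes $L_{\beta}$:
\begin{equation*}
L_{\beta,3}=\left\{\,(t,x)\in R^{4}\ \left\vert\ \left\langle x_{0},x\right\rangle=\beta\,\right.\right\}=R^{1}\times L_{\beta},\qquad L_{\beta}\equiv\left\{\,x\in R^{3}\ \left\vert\ \left\langle x_{0},x\right\rangle=\beta\,\right.\right\},
\end{equation*}
so that $L_{\beta,3}\cap Q^{T}=(0,T)\times(\Omega\cap L_{\beta})$ and $co\dim_{R^{3}}L_{\beta}=1$.

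Applying Lemma \ref{L_2.1} then yields $I\subset R^{1}$ with $\infty>mes_{1}(I)>0$ and, for each $\alpha\in I$, a set $G_{\alpha}\subseteq(0,T)\times(\Omega\cap L_{\alpha})$ with $mes_{3}(G_{\alpha})>0$, such that $G=\bigcup_{\alpha\in I}G_{\alpha}\cup N$ with $mes_{3}(N)=0$. As in the proof of Lemma \ref{L_2.1} one has $G_{\alpha}=G\cap L_{\alpha,3}=G\cap[(0,T)\times(\Omega\cap L_{\alpha})]$, which, since $G\subseteq Q_{1}^{T}$, equals $G\cap Q_{L_{\alpha}}^{T}$. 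Moreover $\Omega$ is bounded, hence $\left\{\left\langle x_{0},x\right\rangle\ \vert\ x\in\Omega\right\}\subseteq(\alpha_{1},\alpha_{2})$ for some finite $\alpha_{1}<\alpha_{2}$, so $I\subseteq(\alpha_{1},\alpha_{2})$, as required.

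It then remains only to read off the assertions. Since $G_{\alpha}\subseteq G$ and, by the definition of $G$, $u\neq v$ on $G$, while $mes_{3}(G_{\alpha})>0$, we get that $u\neq v$ on $Q_{L_{\alpha}}^{T}$ (on a subset of positive $mes_{3}$-measure) for every $\alpha\in I$; from $G_{\alpha}\subseteq(0,T)\times(\Omega\cap L_{\alpha})$ and Fubini's theorem, $0<mes_{3}(G_{\alpha})\leq T\cdot mes_{2}(\Omega\cap L_{\alpha})$, whence $mes_{2}(\Omega\cap L_{\alpha})>0$; and the converse (``vice versa'') follows because a family $\{L_{\alpha}\}_{\alpha\in I}$ with $mes_{1}(I)>0$ on each of which $u,v$ differ on a set of positive measure is precisely statement 2) of Lemma \ref{L_2.1} for $G$, whence $mes_{4}(G)>0$ by that lemma, i.e. $u$ and $v$ are genuinely distinct on $Q_{1}^{T}$. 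The whole argument is essentially bookkeeping around Lemma \ref{L_2.1}; the one point requiring care is the identification, in the second display, of the $4$-dimensional hyperplanes generated by $y_{0}=(0,x_{0})$ with the cylinders over the $2$-planes $L_{\alpha}$, together with the (harmless) remark that the exceptional null set $N$ produced by Lemma \ref{L_2.1} does not affect the conclusion, since only positive measure of the set $\{u\neq v\}$ on each slice is asserted. I also note that this step is purely measure-theoretic, so that the mere measurability of $u,v$ (implied by $u,v\in\mathcal{V}(Q^{T})$) suffices, and the finer trace properties on the planes $L_{\alpha}$ are needed only in the subsequent sections.
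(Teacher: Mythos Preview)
Your argument is correct and, in fact, somewhat cleaner than the paper's. The paper proceeds in two steps: it first applies Lemma~\ref{L_2.1} with $n=4$ and the \emph{time} direction to obtain a set $J\subseteq[0,T)$ with $mes_{1}(J)>0$ such that, for every $t\in J$, the spatial slice $\{x\in\Omega:\,u(t,x)\neq v(t,x)\}$ has positive $mes_{3}$; it then invokes (essentially) Lemma~\ref{L_2.1} a second time, now in $R^{3}$ with a spatial direction, to produce the family $\{L_{\alpha}\}_{\alpha\in I}$ and the $mes_{2}$-positive cross-sections \eqref{2.7}. Your approach collapses these two slicings into a single application of Lemma~\ref{L_2.1} with $n=4$ and the vector $y_{0}=(0,x_{0})$, which identifies the resulting $3$-dimensional hyperplanes of $R^{4}$ directly with the space--time cylinders $R^{1}\times L_{\alpha}$.

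What this buys you is economy and a small gain in rigor: in the paper's two-step version the index set $I$ produced by the second application of Lemma~\ref{L_2.1} is a priori $t$-dependent, and the assertion that \eqref{2.7} holds ``for $\forall t\in J$'' with one fixed $I$ needs an additional Fubini-type justification. Your single-step argument sidesteps this entirely, since $mes_{3}(G_{\alpha})>0$ already encapsulates the joint $(t,x')$-positivity on each cylinder. The converse direction in both arguments is the same appeal to the $2)\Rightarrow 1)$ implication of Lemma~\ref{L_2.1}.
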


\begin{proof}
Let problem (1.1%
${{}^1}$%
) - (\ref{3}) have two different solutions $u,v\in \mathcal{V}\left(
Q^{T}\right) $ then there exist a subdomain of $Q^{T}$ on which these
solutions are different. Then there are $t_{1},t_{2}>0$ such that 
\begin{equation}
mes_{3}\left( \left\{ x\in \Omega \left\vert \ \left\vert u\left( t,x\right) -v\left( t,x\right) \right\vert >0\right. \right\} \right) >0
\label{2.6}
\end{equation}%
holds for any $t\in J\subseteq \left[ t_{1},t_{2}\right] \subseteq \left[
0,T\right) $, where $mes_{1}\left( J\right) >0$ by the virtue of the
condition 
\begin{equation*}
mes_{4}\left( \left\{ (t,x)\in Q^{T}\left\vert \ \left\vert
u(t,x)-v(t,x)\right\vert \right. >0\right\} \right) >0
\end{equation*}%
and of Lemma \ref{L_2.1}.

Whence follows, that there exist, at least, one class of the parallel
hyperplanes $L_{\alpha }$, $\alpha \in I\subseteq \left( \alpha _{1},\alpha
_{2}\right) \subset R^{1}$ such that $co\dim _{R^{3}}L_{\alpha }=1$ and 
\begin{equation}
mes_{2}\left( \left\{ x\in \Omega \cap L_{\alpha }\left\vert \ \left\vert u\left( t,x\right) -v\left( t,x\right) \right\vert >0\right. \right\} \right) >0,\ \forall \alpha \in I
\label{2.7}
\end{equation}%
hold for $\forall t\in J$, where subsets $I$ and $J$ are satisfy
inequations: $mes_{1}\left( I\right) >0$, $mes_{1}\left( J\right) >0$, and
also (\ref{2.7}) holds, by virtue of (\ref{2.6}). This proves the "if" part
of Lemma.

Now consider the converse assertion. Let there exist a class of hyperplanes $%
L_{\alpha }$, $\alpha \in I_{1}\subseteq \left( \alpha _{1},\alpha
_{2}\right) \subset R^{1}$ with $co\dim _{R^{3}}L_{\alpha }=1$ that fulfills
the condition of Lemma and the subset $I_{1}$\ satisfies of same condition
as $I$. Then there exist, at least, one subset $J_{1}$ of $\left[ 0,T\right) 
$ such that $mes_{1}\left( J_{1}\right) >0$ and the inequation $u\left(
t,x\right) \neq v\left( t,x\right) $ holds onto $Q_{2}^{T}$ with $%
mes_{4}\left( Q_{2}^{T}\right) >0$, which is defined as $Q_{2}^{T}\equiv
J_{1}\times U_{L}$, where 
\begin{equation}
U_{L}\equiv \underset{\alpha \in I_{1}}{\bigcup }\left\{ x\in \Omega \cap L_{\alpha }\left\vert \ u\left( t,x\right) \neq v\left( t,x\right) \right. \right\} \subset \Omega ,\ t\in J_{1}
\label{2.8}
\end{equation}%
for which the inequation $mes_{R^{3}}\left( U_{L}\right) >0$ is fulfilled by
virtue of the condition and of Lemma \ref{L_2.1}.

So we get 
\begin{equation*}
u\left( t,x\right) \neq v\left( t,x\right) \text{ onto }Q_{2}^{T}\equiv
J_{1}\times U_{L},\text{ with }mes_{4}\left( Q_{2}^{T}\right) >0.
\end{equation*}%
Thus, we obtain the fact that $u\left( t,x\right) $ and $v\left( t,x\right) $
are different functions in $\mathcal{V}\left( Q^{T}\right) $.
\end{proof}

It is not difficult to see that result of Lemma \ref{L_2.2} is independent
of assumption: $Q_{1}^{T}\subset Q^{T}$ or $Q_{1}^{T}=Q^{T}$.

Likely one could be to prove more general results of such type using of the
regularity properties of weak solutions of this problem (see, \cite{Sch1}, 
\cite{CafKohNir}, \cite{Lin1}, \cite{ChL-RiMay}, etc.).

\section{\label{Sec_I.3}Investigation of the auxiliary problem}

In this section we will transform problem (1.1%
${{}^1}$%
) - (\ref{3}) to the auxiliary problems in order to use of the result of
Lemma \ref{L_2.2}. In other words, here our concept of the investigation of
the posed question will presented. This concept is based to result of Lemma %
\ref{L_2.2}, which shows, that for study of posed problem it is enough to
investigate this problem on the cross-sections of the domain $Q^{T}\equiv
\left( 0,T\right) \times \Omega $.

So, we will begin with the definition of the domain \textit{\ }$%
\Omega
\subset R%
{{}^3}%
$ on which will be study of the problem.

\begin{definition}
\label{D_3.1}\textit{A bounded open domain }$%
\Omega
\subset R%
{{}^3}%
$\textit{\ with the boundary }$\partial 
\Omega
$\textit{\ is spoken from the class }$Lip_{loc}$\textit{\ iff }$\partial 
\Omega
$\textit{\ is a locally Lipschitz hypersurface. (This means: any point }$%
x\in \partial 
\Omega
$\textit{\ possesses a neighbourhood in }$\partial 
\Omega
$\textit{\ that admits a }representation as a hypersurface $y_{3}=\psi
\left( y_{1},y_{2}\right) $, where $\psi $ is a Lipschitz function, and $%
\left( y_{1},y_{2},y_{3}\right) $ are rectangular coordinates in $R%
{{}^3}%
$. In a coordinate basis that may be different from the canonical basis $%
\left( e_{1},e_{1},e_{3}\right) $.)
\end{definition}

According to $\Omega $\textit{\ }is a locally Lipschitz and bounded one can
draw the conclusion: each point $x_{j}\in \partial \Omega $, has an open
neighbourhood $U_{j}$ such that $U_{j}^{\prime }=\Omega \cap U_{j}$ ,
moreover, $\partial \Omega $ can be covered by a finite family of such sets $%
U_{j}^{\prime }$, $j\in J$, that boundary $U_{j}^{\prime }$, $j\in J$ is
Lipschitz, or $\partial \Omega \in Lip_{loc}$. Consequently\textit{\ }for
every "cross-section" $\Omega _{L}\equiv \Omega \cap L\neq \varnothing $\ of 
$%
\Omega
$\ with arbitrary hyperplain $L$\ exists, at least, one coordinate subspace (%
$\left( x_{j},x_{k}\right) $)\ which possesses a domain $P_{x_{i}}\Omega
_{L} $\ (or union of domains) whit the Lipschitz class boundary since $%
\partial \Omega _{L}\equiv \partial \Omega \cap L\neq \varnothing $ and
isomorphically defined of $\Omega _{L}$\ with the affine representation,%
\textit{\ }in addition $\partial \Omega _{L}\Longleftrightarrow \partial
P_{x_{i}}\Omega _{L}$.

Thus, with use of the representation $P_{x_{i}}L$ of the hyperplane $L$ we
get that $\Omega _{L}$ can be written in the form $P_{x_{i}}\Omega _{L}$,
therefore an integral on $\Omega _{L}$ also will defined by the respective
representation, i. e. as the integral on $P_{x_{i}}\Omega _{L}$.

It should be noted that $\Omega _{L}$ can consist of many parts then $%
P_{x_{i}}\Omega _{L}$ will be such as $\Omega _{L}$. Consequently in this
case $\Omega _{L}$ will be as the union of domains and the following
relation will be holds 
\begin{equation*}
\Omega _{L}=\underset{r=1}{\overset{m}{\cup }}\Omega _{L}^{r}\
\Longleftarrow \Longrightarrow \ P_{x_{i}}\Omega _{L}=\underset{r=1}{\overset%
{m}{\cup }}P_{x_{i}}\Omega _{L}^{r},\quad \infty >m\geq 1,
\end{equation*}%
by virtue of the definition \ref{D_3.1}. Therefore, each of $P_{x_{i}}\Omega
_{L}^{r}$ will be the domain and one can investigate these separately, as
the following inclusions take place: $\Omega _{L}^{r}\subset \Omega $ and $%
\partial \Omega _{L}^{r}\subset \partial \Omega $.

So, we will define subdomains of $Q^{T}\equiv \left( 0,T\right) \times
\Omega $ as follows $Q_{L}^{T}\equiv \left( 0,T\right) \times \left( \Omega
\cap L\right) $, where $L$ is arbitrary fixed hyperplane of the dimension
two and $\Omega \cap L\neq \varnothing $. Therefore, we will study the
problem onto the subdomain defined by use of the "cross-section" of $\Omega $
whit arbitrary fixed hyperplane of the dimension two $L$, i.e. the $co\dim
_{R^{3}}L=1$ ($\Omega \cap L$, namely on $Q_{L}^{T}\equiv \left( 0,T\right)
\times \left( \Omega \cap L\right) $).

Consequently, we will investigate uniqueness of the problem (1.1%
${{}^1}$%
) - (\ref{3}) on the "cross-section" $Q_{L}^{T}$ defined by the
"cross-section" of $\Omega $, where $\Omega \subset R^{3}$. This
"cross-section" is understood in the following sense: Let $L$ be a
hyperplane in $R^{3}$ with $co\dim _{R^{3}}L=1$, clearly that $L$ is certain
shift of $R^{2}$ or $R^{2}$. Denote by $\Omega _{L}$ of the "cross-section" $%
\Omega _{L}\equiv \Omega \cap L\neq \varnothing $, $mes_{R^{2}}\left( \Omega
_{L}\right) >0$, e.g. $L$ can be $L\equiv \left\{ \left(
x_{1},x_{2},0\right) \left\vert \ x_{1},x_{2}\in R^{1}\right. \right\} $. In
other words, if $L$ is the hyperplane in $R^{3}$ then we can determine it as 
\begin{equation*}
L\equiv \left\{ x\in R^{3}\left\vert \ \left\langle a,x\right\rangle
=a_{1}x_{1}+a_{2}x_{2}+a_{3}x_{3}=b\right. \right\} ,
\end{equation*}%
where $a\in S_{1}^{R^{3}}\left( 0\right) $ is arbitrary fixed unit vector of 
$R^{3}$and $b\in R^{1}$ is arbitrary fixed constant, furthermore each $a\in
S_{1}^{R^{3}}\left( 0\right) $ and $b\in R^{1}$ define of single $%
L_{b}\left( a\right) $ and vice versa. Whence follows $%
a_{3}x_{3}=b-a_{1}x_{1}-a_{2}x_{2}$, if we assume $a_{3}\neq 0$ then $x_{3}=%
\frac{1}{a_{3}}\left( b-a_{1}x_{1}-a_{2}x_{2}\right) $ , moreover, if we
takes of substitutions: $\frac{b}{a_{3}}\Longrightarrow b,\frac{a_{1}}{a_{3}}%
\Longrightarrow a_{1}$ and $\frac{a_{2}}{a_{3}}\Longrightarrow a_{2}$ then
we derive $x_{3}\equiv \psi _{3}\left( x_{1},x_{2}\right)
=b-a_{1}x_{1}-a_{2}x_{2}$ in the new coefficients.

Since we will investigate the problem (1.1$^{1}$)-(\ref{3}) on $Q_{L}^{T}$,
in the beginning we need define the problem that we will derive after using
this projection to the problem (1.1$^{1}$)-(\ref{3}). In other words, if we
denote by $F:D\left( F\right) \subseteq V\left( Q^{T}\right) \longrightarrow
L^{2}\left( 0,T;V^{\ast }\left( \Omega \right) \right) \times L^{2}\left(
\Omega \right) $ the operator generated by problem (1.1$^{1}$)-(\ref{3}),
then we must determine of the derived problem after projection of the
operator $F$ on $Q_{L}^{T}$. Clearly under this projection some of the
expressions in the problem (1.1$^{1}$)-(\ref{3}) will change according of
above relation, and we will derive the problem that we need to study.
Consequently, now we will derive these expressions.

Thus, we get 
\begin{equation}
D_{3}\equiv \frac{\partial x_{1}}{\partial x_{3}}D_{1}+\frac{\partial x_{2}}{\partial x_{3}}D_{2}=-a_{1}^{-1}D_{1}-a_{2}^{-1}D_{2}\quad \&
\label{3.1}
\end{equation}%
\begin{equation}
D_{3}^{2}=a_{1}^{-2}D_{1}^{2}+a_{2}^{-2}D_{2}^{2}+2a_{1}^{-1}a_{2}^{-1}D_{1}D_{2},\quad D_{i}=\frac{\partial }{\partial x_{i}},i=1,2,3,
\label{3.2}
\end{equation}%
according to above mentioned reasoning.

We will assume that functions $u_{0}$ and $f$ satisfy of conditions of
Theorem \ref{Th_1} , namely $u_{0}\in H^{1/2}\left( \Omega \right) $, $f\in
L^{2}\left( 0,T;H^{1/2}\left( \Omega \right) \right) $ that are needed for
the application of our approach. Consequently, functions $u_{0}$ and $f$\
are correctly defined on $\left( 0,T\right] \times \Omega _{L}$.

Let $L$ be arbitrary hyperplane intersecting with $\Omega $, i.e. $\Omega
_{L}\neq \varnothing $ and $u\in \mathcal{V}\left( Q^{T}\right) $ is the
solution of the problem (1.1%
${{}^1}$%
) - (\ref{3}). We will be investigate of the posed question according of
Lemma \ref{L_2.2}. More precisely, we will study of the posed question for
the problem generated by the "projection" (or "trace") of problem (1.1%
${{}^1}$%
) - (\ref{3}) onto $\left( 0,T\right] \times \Omega _{L}$.

So, we would like to apply of Lemma \ref{L_2.2} to solutions of the problem
(1.1%
${{}^1}$%
) - (\ref{3}), for that it is necessary to study of properties of solutions
of the problem (1.1%
${{}^1}$%
) - (\ref{3}) in "cross-section" $\left( 0,T\right] \times \Omega _{L}$.
Consequently, one need to study the problem which is received from the
problem (1.1%
${{}^1}$%
) - (\ref{3}) by "projection" (or "trace") it to $\left( 0,T\right] \times
\Omega _{L}$ in order to investigate of the needed properties of solutions
of the problem (1.1%
${{}^1}$%
) - (\ref{3}) on $\left( 0,T\right] \times \Omega _{L}$.

As function $u$ belong to $\mathcal{V}\left( Q^{T}\right) $, therefore the
function $u$ on $\left( 0,T\right] \times \Omega _{L}$ is well defined.
Thus, we obtain the following problem on $\left( 0,T\right] \times \Omega
_{L}$ 
\begin{equation*}
\frac{\partial u}{\partial t}-\nu \Delta u+\underset{j=1}{\overset{3}{\sum }}%
u_{j}D_{j}u=\frac{\partial u_{L}}{\partial t}-\nu \left(
D_{1}^{2}+D_{2}^{2}+D_{3}^{2}\right) u_{L}+
\end{equation*}%
\begin{equation*}
u_{L1}D_{1}u_{L}+u_{L2}D_{2}u_{L}+u_{L3}D_{3}u_{L}=\frac{\partial u_{L}}{%
\partial t}-\nu \left[ D_{1}^{2}+D_{2}^{2}+a_{1}^{-2}D_{1}^{2}\right. +
\end{equation*}%
\begin{equation*}
\left. a_{2}^{-2}D_{2}^{2}+2a_{1}^{-1}a_{2}^{-1}D_{1}D_{2}\right]
u_{L}+u_{L1}D_{1}u_{L}+u_{L2}D_{2}u_{L}-u_{L3}a_{1}^{-1}D_{1}u_{L}-
\end{equation*}%
\begin{equation*}
u_{L3}a_{2}^{-1}D_{2}u_{L}=\frac{\partial u_{L}}{\partial t}-\nu \left[
\left( 1+a_{1}^{-2}\right) D_{1}^{2}+\left( 1+a_{2}^{-2}\right) D_{2}^{2}%
\right] u_{L}-
\end{equation*}%
\begin{equation}
2\nu a_{1}^{-1}a_{2}^{-1}D_{1}D_{2}u_{L}+\left(
u_{L1}-a_{1}^{-1}u_{L3}\right) D_{1}u_{L}+\left(
u_{L2}-a_{2}^{-1}u_{L3}\right) D_{2}u_{L}=f_{L}  \label{3.3}
\end{equation}%
on $\left( 0,T\right) \times \Omega _{L}$, by virtue of the above reasons,
of the conditions of the main theorem, and also of the presentations (\ref%
{3.1}) and (\ref{3.2}). We get 
\begin{equation}
\func{div}u_{L}=D_{1}\left( u_{L}-a_{1}^{-1}u_{L3}\right) +D_{2}\left( u_{L}-a_{2}^{-1}u_{L3}\right) =0,\quad x\in \Omega _{L},\ t>0
\label{3.4}
\end{equation}%
\begin{equation}
u_{L}\left( 0,x\right) =u_{L0}\left( x\right) ,\quad \left( t,x\right) \in \left[ 0,T\right] \times \Omega _{L};\quad u_{L}\left\vert \ _{\left( 0,T\right) \times \partial \Omega _{L}}\right. =0.
\label{3.5}
\end{equation}%
by using of same way.

Thus, we derived the problem (\ref{3.3}) - (\ref{3.5}) the study of which
will give we possibility to define properties of solutions $u$ of problem
(1.1%
${{}^1}$%
) - (\ref{3}) on each "cross-section" $\left[ 0,T\right) \times \Omega
_{L}\equiv Q_{L}^{T}$.

In the beginning it is necessary to investigate the existence of the
solution of problem (\ref{3.3}) - (\ref{3.5}) and determine the space where
the existing solutions are contained. Consequently, for study of the
uniqueness of the solution of problem (1.1%
${{}^1}$%
) - (\ref{3}) at first it is necessary to investigate the existence and
uniqueness of the solution for the derived problem (\ref{3.3}) - (\ref{3.5}%
). Therefore we will to investigate of problem (\ref{3.3}) - (\ref{3.5}).

We must to note: For each hyperplane $L\subset R^{3}$ there exists, at
least, one $2$-dimensional subspace of $R^{3}$ that in the given coordinat
system one can determine as $\left( x_{i},x_{j}\right) $ and $%
P_{x_{k}}L=R^{2}$ (e.g. $i,j,k=1,2,3$), i.e. 
\begin{equation*}
L\equiv \left\{ x\in R^{3}\left\vert \ x=\left( x_{i},x_{j},\psi _{L}\left(
x_{i},x_{j}\right) \right) ,\right. \left( x_{i},x_{j}\right) \in
R^{2}\right\}
\end{equation*}%
and 
\begin{equation*}
\Omega \cap L\equiv \left\{ x\in \Omega \left\vert \ x=\left(
x_{i},x_{j},\psi _{L}\left( x_{i},x_{j}\right) \right) ,\right. \left(
x_{i},x_{j}\right) \in P_{x_{k}}\left( \Omega \cap L\right) \right\}
\end{equation*}%
hold, where $\psi _{L}$ is the affine function that is the bijection.

Thereby, in this case functions $u(t,x),\ f(t,x)$ and$\ u_{0}(x)$ can be
represented as 
\begin{equation*}
u(t,x_{i},x_{j},\psi _{L}(x_{i},x_{j}))\equiv v(t,x_{i},x_{j})\text{, }%
f(t,x_{i},x_{j},\psi _{L}(x_{i},x_{j})\equiv \phi (x_{i},x_{j})
\end{equation*}%
and%
\begin{equation*}
u_{0}(x_{i},x_{j},\psi _{L}(x_{i},x_{j}))\equiv v_{0}(x_{i},x_{j})\text{ \ \
on }(0,T)\times P_{x_{k}}\Omega _{L},
\end{equation*}%
respectively.

So, each of these functions can be represented as functions from the
independent variables: $t$, $x_{i}$ and $x_{j}$.

\subsection{\label{SS_I.3.1}\textbf{On Dirichlet to Neumann map}}

As is known (\cite{Nac}, \cite{BehEl}, \cite{DePrZac}, \cite{H-DR}, \cite%
{BelCho} etc.) the Dirichlet to Neumann map is single-value maping if the
homogeneous Dirichlet problem for elliptic equation has only trivial
solution, i.e. zero not is eigenvalue of this problem. Consequently, it is
enough to show that the homogeneous Dirichlet problem for elliptic equation
assosiated to considered problem satisfies of the corresponding conditions
of the results of the mentioned articles. So, we will prove the following

\begin{proposition}
\label{Pr_4.1}The homogeneous Dirichlet problem for elliptic part of problem
(3.3) - (3.5) has only trivial solution.
\end{proposition}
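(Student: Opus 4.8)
The plan is to exhibit the elliptic part of (3.3) as a uniformly elliptic second-order operator on each cross-section $\Omega_L$ that carries no zeroth-order term and whose first-order (convection) part is divergence-free; these three structural facts reduce the claim to a one-line energy estimate.

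First I would isolate the operator. Suppressing the time derivative and the right-hand side in (3.3), the homogeneous Dirichlet problem for the elliptic part reads
$$\mathcal{L}w\equiv-\nu\bigl[(1+a_1^{-2})D_1^2+(1+a_2^{-2})D_2^2+2a_1^{-1}a_2^{-1}D_1D_2\bigr]w+b_1D_1w+b_2D_2w=0\ \ \text{in }\Omega_L,\qquad w\big|_{\partial\Omega_L}=0,$$
with $b_i\equiv u_{Li}-a_i^{-1}u_{L3}$, $i=1,2$. Its principal symbol is
$$\nu\bigl[(1+a_1^{-2})\xi_1^2+(1+a_2^{-2})\xi_2^2+2a_1^{-1}a_2^{-1}\xi_1\xi_2\bigr]=\nu\bigl(|\xi|^2+(a_1^{-1}\xi_1+a_2^{-1}\xi_2)^2\bigr)\ge\nu|\xi|^2,$$
the completion of the square being exactly the trace on $L$ of the Laplacian through (3.1); hence $\mathcal L$ is uniformly elliptic with ellipticity constant $\nu>0$. (In the degenerate configurations $a_1=0$ or $a_2=0$ one picks the projection coordinate so that the corresponding square term is simply absent, and uniform ellipticity with constant $\nu$ still holds.) Note that $\mathcal L$ has no zeroth-order term, so already the weak maximum principle gives uniqueness; I prefer, however, the self-contained energy argument below.

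Then I would run the energy estimate. Pairing $\mathcal L w=0$ with $w$ and integrating over $\Omega_L$ — legitimate because, by Definition \ref{D_3.1} and the remarks after it, $\Omega_L$ is a finite union of bounded Lipschitz domains and $w\in H^1_0(\Omega_L)$ — the second-order part contributes $\nu\int_{\Omega_L}\bigl(|\nabla w|^2+(a_1^{-1}D_1w+a_2^{-1}D_2w)^2\bigr)\,dx\ge\nu\|\nabla w\|_{L^2(\Omega_L)}^2$ after integration by parts, while the convection part gives
$$\int_{\Omega_L}(b\cdot\nabla w)\,w\,dx=\tfrac12\int_{\Omega_L}b\cdot\nabla(w^2)\,dx=-\tfrac12\int_{\Omega_L}(\operatorname{div}b)\,w^2\,dx+\tfrac12\int_{\partial\Omega_L}(b\cdot n)\,w^2\,d\sigma=0,$$
since $\operatorname{div}b=D_1b_1+D_2b_2=0$ is precisely the reduced incompressibility relation (3.4) and the boundary integral vanishes by $w\big|_{\partial\Omega_L}=0$. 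Therefore $\nu\|\nabla w\|_{L^2(\Omega_L)}^2\le0$, so $\nabla w=0$ a.e.; Poincaré's inequality on the bounded set $\Omega_L$ together with the zero trace yields $w\equiv0$. Consequently $0$ is not a Dirichlet eigenvalue, which is what the Dirichlet-to-Neumann construction of Subsection \ref{SS_I.3.1} requires (\cite{Nac}, \cite{BehEl}).

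The one place I would expect scrutiny is the regularity of the coefficients $b_i$: they are components of the cross-sectional trace of the weak solution, so under the standing hypotheses $u_0\in H^{1/2}(\Omega)$, $f\in L^2(0,T;H^{1/2}(\Omega))$ they lie in $L^2(\Omega_L)$ with $\operatorname{div}b=0$ in the distributional sense — exactly enough to make $\int_{\Omega_L}(b\cdot\nabla w)\,w\,dx$ and the integration by parts above meaningful for $w\in H^1_0(\Omega_L)$. No smallness or extra regularity of $b$ is needed, because the convection term drops out of the energy identity altogether; and Green's formula and the Poincaré inequality on $\Omega_L$, when it consists of several Lipschitz components, are applied componentwise.
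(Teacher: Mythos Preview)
Your proof is correct and follows essentially the same route as the paper: pair the equation with the solution, recognize the principal part as $\nu\bigl(|\nabla w|^2+(a_1^{-1}D_1w+a_2^{-1}D_2w)^2\bigr)$ via the same completion of squares, kill the convection term by the reduced incompressibility (3.4) together with the zero boundary trace, and conclude $\nabla w=0$. The paper phrases it as a contradiction and keeps the vector notation $u_L$ explicit, but the mechanism is identical; your additional remarks on uniform ellipticity, the maximum-principle alternative, and coefficient regularity are supplementary observations rather than a different argument.
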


\begin{proof}
If consider the elliptic part of problem (3.3) - (3.5) then we get\ the
problem 
\begin{equation*}
-\Delta u_{L}+Bu_{L}\equiv -\nu \left[ \left( 1+a_{1}^{-2}\right)
D_{1}^{2}+\left( 1+a_{2}^{-2}\right)
D_{2}^{2}+2a_{1}^{-1}a_{2}^{-1}D_{1}D_{2}\right] u_{L}+
\end{equation*}%
\begin{equation*}
\left( u_{L1}-a_{1}^{-1}u_{L3}\right) D_{1}u_{L}+\left(
u_{L2}-a_{2}^{-1}u_{L3}\right) D_{2}u_{L}=0,\ x\in \Omega _{L},\quad
u_{L}\left\vert _{\ \partial \Omega _{L}}\right. =0,
\end{equation*}%
where $\Omega _{L}=\Omega \cap L$.

Let 's show that this problem cannot have nontrivial solutions. This will be
to prove by method of contradiction. Let $u_{L}\in V\left( \Omega
_{L}\right) $ be nontrivial solution of this problem then we get the
following equation 
\begin{equation*}
0=\left\langle -\Delta u_{L}+Bu_{L},u_{L}\right\rangle _{P_{x_{3}}\Omega
_{L}}
\end{equation*}%
hence 
\begin{equation*}
=-\underset{i=1}{\overset{3}{\nu \sum }}\left\langle \left[ \left(
D_{1}^{2}+D_{2}^{2}\right) +\left( a_{1}^{-1}D_{1}+a_{2}^{-1}D_{2}\right)
^{2}\right] u_{Li},u_{Li}\right\rangle _{P_{x_{3}}\Omega _{L}}+
\end{equation*}%
\begin{equation*}
\underset{i=1}{\overset{3}{\sum }}\underset{P_{x_{3}}\Omega _{L}}{\int }%
\left[ u_{L1}D_{1}u_{Li}u_{Li}+u_{L2}D_{2}u_{Li}u_{Li}+\right.
\end{equation*}%
\begin{equation*}
\left. u_{L3}\left( -a_{1}^{-1}D_{1}-a_{2}^{-1}D_{2}\right) u_{Li}u_{Li} 
\right] dx_{1}dx_{2}=
\end{equation*}%
\begin{equation*}
\underset{i=1}{\overset{3}{\nu \sum }}\underset{P_{x_{3}}\Omega _{L}}{\int }%
\left\{ \left( D_{1}u_{Li}\right) ^{2}+\left( D_{2}u_{Li}\right) ^{2}+\left[
\left( a_{1}^{-1}D_{1}+a_{2}^{-1}D_{2}\right) u_{Li}\right] ^{2}\right\}
dx_{1}dx_{2}+
\end{equation*}%
\begin{equation*}
\frac{1}{2}\underset{i=1}{\overset{3}{\sum }}\underset{P_{x_{3}}\Omega _{L}}{%
\int }\left[ u_{L1}D_{1}\left( u_{Li}\right) ^{2}+u_{L2}D_{2}\left(
u_{Li}\right) ^{2}+\right.
\end{equation*}%
\begin{equation*}
\left. u_{L3}\left( -a_{1}^{-1}D_{1}-a_{2}^{-1}D_{2}\right) \left(
u_{Li}\right) ^{2}\right] dx_{1}dx_{2}\geq
\end{equation*}%
\begin{equation*}
\underset{i=1}{\overset{3}{\nu \sum }}\underset{P_{x_{3}}\Omega _{L}}{\int }%
\left[ \left\vert D_{1}u_{Li}\right\vert ^{2}+\left\vert
D_{2}u_{Li}\right\vert ^{2}\right] dx_{1}dx_{2}+
\end{equation*}%
\begin{equation*}
-\frac{1}{2}\underset{i=1}{\overset{3}{\sum }}\underset{P_{x_{3}}\Omega _{L}}%
{\int }\left[ D_{1}u_{L1}+D_{2}u_{L2}+\left(
-a_{1}^{-1}D_{1}-a_{2}^{-1}D_{2}\right) u_{L3}\right] \left\vert
u_{Li}\right\vert ^{2}dx_{1}dx_{2}=
\end{equation*}%
by (\ref{3.4}) 
\begin{equation*}
\underset{i=1}{\overset{3}{\nu \sum }}\underset{P_{x_{3}}\Omega _{L}}{\int }%
\left[ \left\vert D_{1}u_{Li}\right\vert ^{2}+\left\vert
D_{2}u_{Li}\right\vert ^{2}\right] dx_{1}dx_{2}-\frac{1}{2}\underset{i=1}{%
\overset{3}{\sum }}\underset{P_{x_{3}}\Omega _{L}}{\int }\left\vert
u_{Li}\right\vert ^{2}\func{div}u_{L}dx_{1}dx_{2}=
\end{equation*}%
\begin{equation*}
\underset{i=1}{\overset{3}{\nu \sum }}\underset{P_{x_{3}}\Omega _{L}}{\int }%
\left[ \left\vert D_{1}u_{Li}\right\vert ^{2}+\left\vert
D_{2}u_{Li}\right\vert ^{2}\right] dx_{1}dx_{2}>0.
\end{equation*}%
Thus, the obtained contradiction shows that function $u_{L}$ need be zero,
i.e. $u_{L}=0$ holds.

Consequently, the Dirichlet to Neumann map is single-value operator.
\end{proof}

It is well-known that operator $-\Delta :H_{0}^{1}\left( \Omega _{L}\right)
\longrightarrow $ $H^{-1}\left( \Omega _{L}\right) $ generates of the $C_{0}$
semigroup on $H\left( \Omega _{L}\right) $, and since inclusion $%
H_{0}^{1}\left( \Omega _{L}\right) \subset H^{-1}\left( \Omega _{L}\right) $
is compact, therefore $\left( -\Delta \right) ^{-1}$ is the compact operator
in $H^{-1}\left( \Omega _{L}\right) $. Moreover, $-\Delta :H^{1/2}\left(
\partial \Omega _{L}\right) \longrightarrow H^{-1/2}\left( \partial \Omega
_{L}\right) $ and the operator $B:$ $H^{1/2}\left( \partial \Omega
_{L}\right) \longrightarrow H^{-1/2}\left( \partial \Omega _{L}\right) $
also possess appropriate properties of such types.

\section{\label{Sec_I.4}Existence of Solution of Problem (3.3) - (3.5)}

So, assume conditons of Theorem \ref{Th_1} fulfilled, i. e. 
\begin{equation*}
u_{0}\in H^{1/2}\left( \Omega \right) ,\quad f\in L^{2}\left(
0,T;H^{1/2}\left( \Omega \right) \right) ,
\end{equation*}%
then these functions on $\Omega _{L}$, $Q_{L}^{T}$ are correctly defined and
belong to $H\left( \Omega _{L}\right) $, $L^{2}\left( 0,T;H\left( \Omega
_{L}\right) \right) $, respectively. Consequently, we can study problem (\ref%
{3.3}) - (\ref{3.5}) under conditions $u_{0L}\in H\left( \Omega _{L}\right) $
and $f_{L}\in L^{2}\left( 0,T;H\left( \Omega _{L}\right) \right) $, as
independent problem.

By executing according the known argument started by Leray (\cite{Ler1},
see, also \cite{Lio1}, \cite{FoiManRosTem}, \cite{Gal}), the space $V\left(
\Omega _{L}\right) $ of the vector functions $u$ one can determine by same
way as in Definition \ref{D_2.1}: the space $V\left( \Omega _{L}\right) $ is
the closure in $\left( H_{0}^{1}\left( \Omega _{L}\right) \right) ^{3}$\ of 
\begin{equation*}
\left\{ \varphi \left\vert \ \varphi \in \left( C_{0}^{\infty }\left( \Omega
_{L}\right) \right) ^{3},\right. \func{div}\varphi =0\right\}
\end{equation*}%
$\left( W_{0}^{1,2}\left( \Omega _{L}\right) \right) ^{3}$, where $\func{div}
$ is regarded in the sense (\ref{3.4}), in this case the dual space $V\left(
\Omega _{L}\right) $ is determined as $V^{\ast }\left( \Omega _{L}\right) $,
the space $H\left( \Omega _{L}\right) $ also is determined as the closure in 
$\left( L^{2}\left( \Omega _{L}\right) \right) ^{3}$ of 
\begin{equation*}
\left\{ \varphi \left\vert \ \varphi \in \left( C_{0}^{\infty }\left( \Omega
_{L}\right) \right) ^{3},\right. \func{div}\varphi =0\right\}
\end{equation*}%
in the topology of $\left( L^{2}\left( \Omega _{L}\right) \right) ^{3}$.

Consequently, one can determine of space $V\left( Q_{L}^{T}\right) $ as 
\begin{equation*}
V\left( Q_{L}^{T}\right) \equiv L^{2}\left( 0,T;V\left( \Omega _{L}\right)
\right) \cap L^{\infty }\left( 0,T;H\left( \Omega _{L}\right) \right) .
\end{equation*}

Here $\Omega \subset R^{3}$ is bounded domain of $Lip_{loc}$ and $\Omega
_{L}\subset R^{2}$ is subdomain defined in the beginning of Section \ref%
{Sec_I.3} therefore, $\Omega _{L}$ is Lipschitz, $Q_{L}^{T}\equiv \left(
0,T\right) \times \Omega _{L}$.

Let $f_{L}\in L^{2}\left( 0,T;V^{\ast }\left( \Omega _{L}\right) \right) $
and $u_{0L}\in H\left( \Omega _{L}\right) $. Consequently, a solution of
problem (\ref{3.3}) - (\ref{3.5}) will be understood as follows.

So, we can call the solution of this problem: A function $u_{L}\in \mathcal{V%
}\left( Q_{L}^{T}\right) $ is called a solution of the problem (\ref{3.3}) -
(\ref{3.5}) if $u_{L}(t,x^{\prime })$ satisfy the equality 
\begin{equation}
\frac{d}{dt}\left\langle u_{L},v\right\rangle _{\Omega _{L}}-\left\langle
\nu \Delta u_{L},v\right\rangle _{\Omega _{L}}+\left\langle \underset{j=1}{%
\overset{3}{\sum }}u_{Lj}D_{j}u_{L},v\right\rangle _{\Omega
_{L}}=\left\langle f_{L},v\right\rangle _{\Omega _{L}},  \label{3.6}
\end{equation}%
for any $v\in V\left( \Omega _{L}\right) $ and almost everywhere in $\left(
0,T\right) $ and initial condition 
\begin{equation*}
\left\langle u_{L}\left( t\right) ,v\right\rangle \left\vert _{t=0}\right.
=\left\langle u_{0L},v\right\rangle ,
\end{equation*}%
in the sense of $H$, where $\left\langle \circ ,\circ \right\rangle _{\Omega
_{L}}$ is the dual form for the pair of spaces $\left( V\left( \Omega
_{L}\right) ,V^{\ast }\left( \Omega _{L}\right) \right) $ and $\Omega _{L}$
is Lipschitz. Where $x^{\prime }\in \Omega _{L}$ is $x^{\prime }\equiv
\left( x_{1},x_{2}\right) $ (according to our selection of the $L$) and $%
\mathcal{V}\left( Q_{L}^{T}\right) $ is 
\begin{equation*}
\mathcal{V}\left( Q_{L}^{T}\right) \equiv \left\{ w\left\vert \ w\in V\left(
Q_{L}^{T}\right) ,\ w^{\prime }\in L^{\frac{4}{3}}\left( 0,T;V^{\ast }\left(
\Omega _{L}\right) \right) \right. \right\} .
\end{equation*}

We will lead of the proof of this problem in five-steps as indepandent
problem.

\subsection{\label{SS_I.4.1}\textbf{A priori estamates}}

In order to derive of the a priori estimates for the possible solutions of
the problem we will apply of the usual approach. By substituting in (\ref%
{3.6}) of the function $u_{L}$ instead of the function $v$, we get 
\begin{equation}
\frac{d}{dt}\left\langle u_{L},u_{L}\right\rangle _{\Omega
_{L}}-\left\langle \nu \Delta u_{L},u_{L}\right\rangle _{\Omega
_{L}}+\left\langle \underset{j=1}{\overset{3}{\sum }}u_{Lj}D_{j}u_{L},u_{L}%
\right\rangle _{\Omega _{L}}=\left\langle f_{L},u_{L}\right\rangle _{\Omega
_{L}}.  \label{3.6'}
\end{equation}%
Thence, by making the known calculations, taking into account of the
condition on $\Omega _{L}$ and (\ref{3.4}), and also of calculations (\ref%
{3.1}) that carried out in the previous Section, we derive 
\begin{equation*}
\frac{1}{2}\frac{d}{dt}\left\Vert u_{L}\right\Vert _{H\left( \Omega
_{L}\right) }^{2}\left( t\right) +\nu \left( 1+a_{1}^{-2}\right) \left\Vert
D_{1}u_{L}\right\Vert _{H\left( \Omega _{L}\right) }^{2}\left( t\right) +
\end{equation*}%
\begin{equation}
\nu \left( 1+a_{2}^{-2}\right) \left\Vert D_{2}u_{L}\right\Vert _{H\left(
\Omega _{L}\right) }^{2}\left( t\right) +2\nu
a_{1}^{-1}a_{2}^{-1}\left\langle D_{1}u_{L},D_{2}u_{L}\right\rangle _{\Omega
_{L}}\left( t\right) =\left\langle f_{L},u_{L}\right\rangle _{\Omega _{L}},
\label{3.7}
\end{equation}%
where $\left\langle g,h\right\rangle _{\Omega _{L}}=\underset{i=1}{\overset{3%
}{\sum }}\underset{P_{x_{3}}\Omega _{L}}{\int }g_{i}h_{i}dx_{1}dx_{2}$ for
any $g,h\in H\left( \Omega _{L}\right) $, or $g\in \left( W^{1,2}\left(
\Omega _{L}\right) \right) ^{3}$ and $h\in \left( W^{-1,2}\left( \Omega
_{L}\right) \right) ^{3}$, respectively. We will show the correctness of (%
\ref{3.7}), and to this end we shall prove the correctness of each term of
this sum, separately.

So, using of (\ref{3.6'}) we get 
\begin{equation*}
-\nu \left\langle \Delta u_{L}\left( t\right) ,u_{L}\left( t\right)
\right\rangle _{\Omega _{L}}=
\end{equation*}%
\begin{equation*}
-\underset{i=1}{\overset{3}{\nu \sum }}\left\langle \left[ \left(
1+a_{1}^{-2}\right) D_{1}^{2}+\left( 1+a_{2}^{-2}\right)
D_{2}^{2}+2a_{1}^{-1}a_{2}^{-1}D_{1}D_{2}\right] u_{Li},u_{Li}\right\rangle
_{P_{x_{3}}\Omega _{L}}=
\end{equation*}%
\begin{equation*}
\underset{i=1}{\overset{3}{\nu \sum }}\underset{P_{x_{3}}\Omega _{L}}{\int }%
\left[ \left( 1+a_{1}^{-2}\right) \left( D_{1}u_{Li}\right) ^{2}+\left(
1+a_{2}^{-2}\right) \left( D_{2}u_{Li}\right) ^{2}+\right.
\end{equation*}%
\begin{equation*}
\left. 2a_{1}^{-1}a_{2}^{-1}D_{1}u_{Li}D_{2}u_{Li}\right] dx_{1}dx_{2}
\end{equation*}%
thus is obtained the sum reducible in (\ref{3.7}).

Whence isn`t difficult to seen, that if to estimate of the last adding in
the above mentioned sum then one will received 
\begin{equation*}
-\nu \left\langle \Delta u_{L}\left( t\right) ,u_{L}\left( t\right)
\right\rangle _{\Omega _{L}}\geq
\end{equation*}%
\begin{equation}
\nu \left[ \left\Vert D_{1}u_{L}\right\Vert _{H\left( \Omega _{L}\right)
}^{2}\left( t\right) +\left\Vert D_{2}u_{L}\right\Vert _{H\left( \Omega
_{L}\right) }^{2}\left( t\right) \right] .  \label{3.8}
\end{equation}

Now consider the trilinear form from (\ref{3.6'}) 
\begin{equation*}
\left\langle \underset{j=1}{\overset{3}{\sum }}u_{Lj}D_{j}u_{L},u_{L}\right%
\rangle _{\Omega _{L}}=
\end{equation*}%
due to (\ref{3.3}) we get 
\begin{equation*}
\underset{i=1}{\overset{3}{\sum }}\underset{P_{x_{3}}\Omega _{L}}{\int }%
\left[ u_{L1}D_{1}u_{Li}u_{Li}+u_{L2}D_{2}u_{Li}u_{Li}+\right.
\end{equation*}%
\begin{equation*}
\left. u_{L3}\left( -a_{1}^{-1}D_{1}-a_{2}^{-1}D_{2}\right) u_{Li}u_{Li} 
\right] dx_{1}dx_{2}=
\end{equation*}%
\begin{equation*}
\frac{1}{2}\underset{i=1}{\overset{3}{\sum }}\underset{P_{x_{3}}\Omega _{L}}{%
\int }\left[ u_{L1}D_{1}\left( u_{Li}\right) ^{2}+u_{L2}D_{2}\left(
u_{Li}\right) ^{2}+\right.
\end{equation*}%
\begin{equation*}
\left. u_{L3}\left( -a_{1}^{-1}D_{1}-a_{2}^{-1}D_{2}\right) \left(
u_{Li}\right) ^{2}\right] dx_{1}dx_{2}=
\end{equation*}%
\begin{equation*}
-\frac{1}{2}\underset{i=1}{\overset{3}{\sum }}\underset{P_{x_{3}}\Omega _{L}}%
{\int }\left[ D_{1}u_{L1}+D_{2}u_{L2}+\left(
-a_{1}^{-1}D_{1}-a_{2}^{-1}D_{2}\right) u_{L3}\right] \left( u_{Li}\right)
^{2}dx_{1}dx_{2}=
\end{equation*}%
hence by (\ref{3.4}) 
\begin{equation}
-\frac{1}{2}\underset{i=1}{\overset{3}{\sum }}\underset{P_{x_{3}}\Omega _{L}}%
{\int }\left( u_{Li}\right) ^{2}\func{div}u_{L}dx_{1}dx_{2}=0.  \label{3.9}
\end{equation}

Consequently, the correctness of equation (\ref{3.7}) is proved.

From (\ref{3.7}) in view of (\ref{3.8})-(\ref{3.9}) is derived the following
inequality 
\begin{equation*}
\frac{1}{2}\frac{d}{dt}\left\Vert u_{L}\right\Vert _{H\left( \Omega
_{L}\right) }^{2}\left( t\right) +
\end{equation*}%
\begin{equation}
\nu \underset{i=1}{\overset{3}{\sum }}\underset{P_{x_{3}}\Omega _{L}}{\int }%
\left[ \left( D_{1}u_{Li}\right) ^{2}+\left( D_{2}u_{Li}\right) ^{2}\right]
dx_{1}dx_{2}\leq \underset{P_{x_{3}}\Omega _{L}}{\int }\left\vert \left(
f_{L}\cdot u_{L}\right) \right\vert dx_{1}dx_{2}  \label{3.10}
\end{equation}%
which give we the following a priori estimates 
\begin{equation}
\left\Vert u_{L}\right\Vert _{H\left( \Omega _{L}\right) }\left( t\right)
\leq C\left( f_{L},u_{L0},mes\Omega \right) ,  \label{3.11}
\end{equation}%
\begin{equation}
\left\Vert D_{1}u_{L}\right\Vert _{H\left( \Omega _{L}\right) }+\left\Vert
D_{2}u_{L}\right\Vert _{H\left( \Omega _{L}\right) }\leq C\left(
f_{L},u_{L0},mes\Omega \right) ,  \label{3.12}
\end{equation}%
where $C\left( f_{L},u_{L0},mes\Omega \right) >0$ is the constant that is
independent of $u_{L}$. Consequently, any possible solution of this problem
belong to a bounded subset of the space $V\left( Q_{L}^{T}\right) $.

Thus, it is remain to receive of the necessary a priori estimate for $\frac{%
\partial u_{L}}{\partial t}$ and to study of properties of the thrilinear
term in order to prove of the existence theorem. \footnote{%
It should be noted that if the represantation of $%
\Omega
_{L}$ by coordinates $(x_{1},x_{2})$ not is best for the definition of the
appropriate integral, then we will select other coordinates: either $%
(x_{1},x_{3})$ or $(x_{2},x_{3})$ instead of $(x_{1},x_{2})$, which is best
for our goal (that must exist by virtue of the definition of $%
\Omega
$).
\par
{}}

\subsection{\label{SS_I.4.2}Boundedness of the trilinear form}

Boundedness of the trilinear form $b_{L}\left( u_{L},u_{L},v\right) $ from (%
\ref{3.6}) follows from the next result.

\begin{proposition}
\label{P_3.1}Let $u_{L}\in V\left( Q_{L}^{T}\right) \cap L^{\infty }\left(
0,T;H\right) $, $v\in V\left( \Omega _{L}\right) $ and $B$ is the operator
defined by 
\begin{equation*}
\left\langle B\left( u_{L}\right) ,v\right\rangle _{\Omega _{L}}=b_{L}\left(
u_{L},u_{L},v\right) =\left\langle \underset{j=1}{\overset{3}{\sum }}%
u_{Lj}D_{j}u_{L},v\right\rangle _{\Omega _{L}}
\end{equation*}%
then $B\left( u_{L}\right) $ belongs to bounded subset of $L^{\frac{3}{2}%
}\left( 0,T;V^{\ast }\left( \Omega _{L}\right) \right) $.
\end{proposition}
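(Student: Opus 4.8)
The plan is to reduce the statement to a pointwise-in-time bound of the form
\begin{equation*}
\|B(u_L)(t)\|_{V^\ast(\Omega_L)}\le C\,\|u_L(t)\|_{H(\Omega_L)}\,\|u_L(t)\|_{V(\Omega_L)}\quad\text{for a.e. }t\in(0,T),
\end{equation*}
and then to integrate this inequality in $t$, using the a priori bounds \eqref{3.11}--\eqref{3.12} (equivalently, that any solution lies in a bounded subset of $V(Q_L^T)=L^\infty(0,T;H(\Omega_L))\cap L^2(0,T;V(\Omega_L))$), so as to conclude that $B(u_L)$ stays in a bounded subset of $L^{3/2}(0,T;V^\ast(\Omega_L))$.

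To obtain the pointwise estimate I would first put the nonlinear term in divergence form. Writing $w\equiv(u_{L1}-a_1^{-1}u_{L3},\,u_{L2}-a_2^{-1}u_{L3})$, so that $\func{div}w=0$ on $\Omega_L$ by \eqref{3.4}, one has
\begin{equation*}
\langle B(u_L),v\rangle_{\Omega_L}=\sum_{i=1}^{3}\int_{\Omega_L}\bigl(w\cdot\nabla u_{Li}\bigr)v_i\,dx .
\end{equation*}
For $v$ smooth and solenoidal (hence for all $v\in V(\Omega_L)$ by density), an integration by parts that discards boundary terms since $v|_{\partial\Omega_L}=0$ and uses $\func{div}w=0$ — this is the computation already carried out for the special case $v=u_L$ in the derivation of \eqref{3.9} — gives
\begin{equation*}
\langle B(u_L),v\rangle_{\Omega_L}=-\sum_{i=1}^{3}\int_{\Omega_L}u_{Li}\,\bigl(w\cdot\nabla v_i\bigr)\,dx .
\end{equation*}
Hölder's inequality with exponents $4,4,2$ together with $|w|\le C|u_L|$ then yields $|\langle B(u_L),v\rangle_{\Omega_L}|\le C\|u_L\|_{L^4(\Omega_L)}^{2}\|v\|_{V(\Omega_L)}$; and since $\Omega_L\subset R^2$ is a bounded Lipschitz domain, the two-dimensional Ladyzhenskaya inequality $\|z\|_{L^4(\Omega_L)}^{2}\le C\|z\|_{L^2(\Omega_L)}\|z\|_{W^{1,2}(\Omega_L)}$ applied componentwise, combined with the Poincaré inequality on $V(\Omega_L)$, produces the claimed bound $\|B(u_L)(t)\|_{V^\ast(\Omega_L)}\le C\|u_L(t)\|_{H(\Omega_L)}\|u_L(t)\|_{V(\Omega_L)}$.

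For the time integration I would use \eqref{3.11} to bound $\|u_L(t)\|_{H(\Omega_L)}$ by a data-dependent constant $M$ for a.e. $t$, and \eqref{3.12} (or integration of \eqref{3.10}) to bound $\int_0^T\|u_L(t)\|_{V(\Omega_L)}^2\,dt$ by a data-dependent constant $N$; then, by Hölder in $t$ with exponents $\tfrac43$ and $4$,
\begin{equation*}
\int_0^T\|B(u_L)(t)\|_{V^\ast(\Omega_L)}^{3/2}\,dt\le C\,M^{3/2}\int_0^T\|u_L(t)\|_{V(\Omega_L)}^{3/2}\,dt\le C\,M^{3/2}\,T^{1/4}\,N^{3/4},
\end{equation*}
a bound independent of the particular solution $u_L$, which proves the proposition (and in fact shows $B(u_L)$ lies in a bounded subset of the smaller space $L^{2}(0,T;V^\ast(\Omega_L))$).

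The main obstacle is structural rather than computational: the bare Hölder estimate $|\langle B(u_L),v\rangle_{\Omega_L}|\le C\|u_L\|_{L^4}\|\nabla u_L\|_{L^2}\|v\|_{V}$, used without first exploiting the cancellation, only yields $\|B(u_L)(t)\|_{V^\ast}\le C\|u_L\|_{H}^{1/2}\|u_L\|_{V}^{3/2}$ and hence $B(u_L)\in L^{4/3}(0,T;V^\ast)$, which is too weak for the stated claim; so the integration by parts resting on \eqref{3.4} — i.e. the fact that the ``effective'' two-dimensional velocity $w$ is genuinely divergence-free — is essential, and the care needed is to justify that identity for every $v\in V(\Omega_L)$ by approximation with smooth solenoidal fields, and to invoke the Ladyzhenskaya/Sobolev embeddings on the Lipschitz cross-section $\Omega_L$ (valid via an extension operator). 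One should also note that the coefficients $a_1^{-1},a_2^{-1}$ are finite because the hyperplane $L$ may be chosen with $a_3\neq0$, permuting coordinates if necessary as in the footnote of Section \ref{SS_I.4.1}.
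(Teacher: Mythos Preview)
Your argument is correct and, in fact, sharper than the paper's. The key difference is your preliminary integration by parts: by writing $b_L(u_L,u_L,v)=-b_L(u_L,v,u_L)$ (the usual skew-symmetry in the last two slots, valid because the effective two-dimensional field $w$ is divergence-free by \eqref{3.4}), you shift the derivative onto $v$ and obtain the pointwise bound $\|B(u_L)\|_{V^\ast}\le C\|u_L\|_{L^4}^{2}\le C\|u_L\|_H\|u_L\|_V$. This yields $B(u_L)\in L^{2}(0,T;V^\ast)$, and a fortiori the stated $L^{3/2}$ bound.

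The paper, by contrast, does \emph{not} integrate by parts: it estimates the trilinear form directly as $|\langle B(u_L),v\rangle|\le c\|u_L\|_{L^4}\|\nabla u_L\|_{L^2}\|v\|_{L^4}$ (inequality \eqref{3.14}), which after Gagliardo--Nirenberg gives only $\|B(u_L)\|_{V^\ast}\le C\|u_L\|_H^{1/2}\|u_L\|_V^{3/2}$, and then integrates in time to conclude membership in $L^{4/3}(0,T;V^\ast)$ (inequality \eqref{3.15}). Thus the paper's own proof establishes a \emph{weaker} conclusion than the proposition it heads; the exponent $3/2$ in the statement appears to be a misprint for $4/3$, consistent with everything downstream (Proposition~\ref{P_4.1}, Corollary~\ref{C_4.1}, and the definition \eqref{3.16} of $\mathcal V(Q_L^T)$), all of which use $4/3$.

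Your ``main obstacle'' paragraph therefore anticipates exactly the route the paper takes and correctly diagnoses why it stops at $L^{4/3}$; your exploitation of the cancellation via \eqref{3.4} is what buys the improvement to $L^{3/2}$ (indeed $L^{2}$).
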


\begin{proof}
At first we will show boundedness of the operator $B$ acting from $V\left(
\Omega _{L}\right) \times V\left( \Omega _{L}\right) $ to $V^{\ast }\left(
\Omega _{L}\right) $ for a. e. $t\in \left( 0,T\right) $. We have 
\begin{equation*}
\left\langle B\left( u_{L}\right) ,v\right\rangle _{\Omega
_{L}}=\left\langle \underset{j=1}{\overset{3}{\sum }}u_{Lj}D_{j}u_{L},v%
\right\rangle _{\Omega _{L}}=
\end{equation*}%
\begin{equation*}
\underset{i=1}{\overset{3}{\sum }}\underset{P_{x_{3}}\Omega _{L}}{\int }%
\left[ u_{L1}D_{1}u_{Li}v_{i}+u_{L2}D_{2}u_{Li}v_{i}+u_{L3}\left(
-a_{1}^{-1}D_{1}-a_{2}^{-1}D_{2}\right) u_{Li}v_{i}\right] dx_{1}dx_{2}=
\end{equation*}%
\begin{equation*}
\underset{i=1}{\overset{3}{\sum }}\underset{P_{x_{3}}\Omega _{L}}{\int }%
\left[ \left( u_{L1}-a_{1}^{-1}u_{L3}\right) D_{1}u_{Li}v_{i}+\left(
u_{L2}-a_{2}^{-1}u_{L3}\right) D_{2}u_{Li}v_{i}\right] dx_{1}dx_{2}=
\end{equation*}%
\begin{equation}
\underset{i=1}{\overset{3}{\sum }}\underset{P_{x_{3}}\Omega _{L}}{\int }%
\left[ \left( u_{L1}-a_{1}^{-1}u_{L3}\right) D_{1}+\left(
u_{L2}-a_{2}^{-1}u_{L3}\right) D_{2}\right] u_{Li}v_{i}dx_{1}dx_{2}
\label{3.13}
\end{equation}%
due of (\ref{3.4}) and of the definition \ref{D_3.1}.

Hence follows 
\begin{equation*}
\left\vert \left\langle B\left( u_{L}\right) ,v\right\rangle \right\vert
\leq \underset{i=1}{\overset{3}{\sum }}\underset{P_{x_{3}}\Omega _{L}}{\int }%
c\left( \left\vert u_{L1}\right\vert +\left\vert u_{L2}\right\vert
+\left\vert u_{L3}\right\vert \right) \left( \left\vert
D_{1}u_{Li}\right\vert +\left\vert D_{2}u_{Li}\right\vert \right)
v_{i}dx_{1}dx_{2}\leq
\end{equation*}%
\begin{equation}
c\left\Vert u_{L}\right\Vert _{L^{4}\left( \Omega _{L}\right) }\left\Vert
u_{L}\right\Vert _{V}\left\Vert v\right\Vert _{L^{4}\left( \Omega
_{L}\right) }\Longrightarrow \left\Vert B\left( u_{L}\right) \right\Vert
_{V^{\ast }}\leq c\left\Vert u_{L}\right\Vert _{L^{4}\left( \Omega
_{L}\right) }\left\Vert u_{L}\right\Vert _{V}  \label{3.14}
\end{equation}%
due of $V\left( \Omega _{L}\right) \subset L^{4}\left( \Omega _{L}\right) $.
This also shows that operator $B:V\left( \Omega _{L}\right) \longrightarrow
V^{\ast }\left( \Omega _{L}\right) $ is bounded, and continuous for a. e. $%
t>0$.

Finally, we obtain needed result using above inequality and the well-known
inequality, which is valid in two-dimension space (see, \cite{Lad1}, \cite%
{Lio1}, \cite{Tem1}) 
\begin{equation*}
\overset{T}{\underset{0}{\int }}\left\Vert B\left( u_{L}\left( t\right)
\right) \right\Vert _{V^{\ast }}^{\frac{4}{3}}dt\leq c\overset{T}{\underset{0%
}{\int }}\left( \left\Vert u_{L}\left( t\right) \right\Vert
_{L^{4}}\left\Vert u_{L}\right\Vert _{V}\right) ^{\frac{4}{3}}dt\leq
\end{equation*}%
according to Gagliardo--Nirenberg inequality we get 
\begin{equation*}
c_{1}\overset{T}{\underset{0}{\int }}\left\Vert u_{L}\left( t\right)
\right\Vert _{L^{2}}^{\frac{2}{3}}\left\Vert u_{L}\right\Vert _{V}^{2}dt\leq
c_{1}\left\Vert u_{L}\right\Vert _{L^{\infty }\left( 0,T;H\right) }^{\frac{2%
}{3}}\overset{T}{\underset{0}{\int }}\left\Vert u_{L}\right\Vert
_{V}^{2}dt\Longrightarrow
\end{equation*}%
\begin{equation}
\left\Vert B\left( u_{L}\right) \right\Vert _{L^{\frac{4}{3}}\left(
0,T;V^{\ast }\right) }\leq c_{1}\left\Vert u_{L}\right\Vert _{L^{\infty
}\left( 0,T;H\right) }^{\frac{1}{2}}\left\Vert u_{L}\right\Vert
_{L^{2}\left( 0,T;V\right) }^{\frac{3}{2}}.  \label{3.15}
\end{equation}
\end{proof}

Moreover, is proved that 
\begin{equation*}
B:L^{2}\left( 0,T;V\left( \Omega _{L}\right) \right) \cap L^{\infty }\left(
0,T;H\right) =V\left( Q_{L}^{T}\right) \longrightarrow L^{\frac{4}{3}}\left(
0,T;V^{\ast }\right)
\end{equation*}%
is bounded operator.

\subsection{\label{SS_I.4.3}Boundedness of $u^{\prime }$}

Sketch of the proof that $u^{\prime }$ belongs to bounded subset of the
space $L^{\frac{4}{3}}\left( 0,T;V^{\ast }\left( \Omega _{L}\right) \right) $%
. It is possible to draw the following conclusion based due received of a
priori estimates, proposition \ref{P_3.1} and reflexivity of all used
spaces: If we will use of the Faedo-Galerkin's method for investigation then
for the approximate solutions we obtain estimates of such type as (\ref{3.11}%
), (\ref{3.12}) and (\ref{3.15}). Indeed since $V\left( \Omega _{L}\right) $
is a separable there exists a countable subset of linearly independent
elements $\left\{ w_{i}\right\} _{i=1}^{\infty }\subset V\left( \Omega
_{L}\right) $, which is total in $V\left( \Omega _{L}\right) $. For each $m$
we can define an approximate solution of $u_{Lm}$ (\ref{3.6}) as follows

\begin{equation}
u_{Lm}=\overset{m}{\underset{i=1}{\sum }}u_{Lm}^{i}\left( t\right)
w_{i},\quad m=1,\ 2,....,  \label{5.4}
\end{equation}%
where $u_{Lm}^{i}\left( t\right) $, $i=\overline{1,m}$ be unknown functions
that will be determined as solutions of the following system of the
differential equations that is received according to equation (\ref{3.6}) 
\begin{equation*}
\left\langle \frac{d}{dt}u_{Lm},w_{j}\right\rangle _{\Omega
_{L}}=\left\langle \nu \Delta u_{Lm},w_{j}\right\rangle _{\Omega
_{L}}+b_{L}\left( u_{Lm},u_{Lm},w_{j}\right) +
\end{equation*}%
\begin{equation}
+\left\langle f_{L},w_{j}\right\rangle _{\Omega _{L}},\quad t\in \left( 0,T 
\right] ,\quad j=\overline{1,m},\quad  \label{5.5}
\end{equation}%
\begin{equation*}
u_{Lm}\left( 0\right) =u_{0Lm}.
\end{equation*}%
Here we assume $\left\{ u_{0Lm}\right\} _{m=1}^{\infty }\subset H\left(
\Omega _{L}\right) $ be such sequence that $u_{0Lm}\longrightarrow u_{0L}$
in $H\left( \Omega _{L}\right) $ as $m\longrightarrow \infty $. (Since $%
V\left( \Omega _{L}\right) $ is everywhere dense in $H\left( \Omega
_{L}\right) $ one can determine $u_{0Lm}$ by using the total system $\left\{
w_{i}\right\} _{i=1}^{\infty }$). \ 

So, with use (\ref{5.4}) in (\ref{5.5}) we obtain 
\begin{equation*}
\overset{m}{\underset{j=1}{\sum }}\left\langle w_{j},w_{i}\right\rangle
_{\Omega _{L}}\frac{d}{dt}u_{Lm}^{j}\left( t\right) -\nu \overset{m}{%
\underset{j=1}{\sum }}\left\langle \Delta w_{j},w_{i}\right\rangle _{\Omega
_{L}}u_{Lm}^{j}\left( t\right) +
\end{equation*}%
\begin{equation*}
\overset{m}{\underset{j,k=1}{\sum }}b_{L}\left( w_{j},w_{k},w_{i}\right)
u_{Lm}^{j}\left( t\right) u_{Lm}^{k}\left( t\right) =\left\langle
f_{L}\left( t\right) ,w_{i}\right\rangle _{\Omega _{L}},\ i=\overline{1,m}.
\end{equation*}%
As the matrix generated by $\left\langle w_{i},w_{j}\right\rangle _{\Omega
_{L}}$, $i,j=\overline{1,m}$ is nonsingular then its inverse exists. Thanks
this from the previous equations we will derive the following Cauchy problem
for the system of the nonlinear ordinary differential equations for unknown
functions $u_{Lm}^{i}\left( t\right) $, $i=1,...,m$. 
\begin{equation*}
\frac{du_{Lm}^{i}\left( t\right) }{dt}=\overset{m}{\underset{j=1}{\sum }}%
c_{i,j}\left\langle f_{L}\left( t\right) ,w_{j}\right\rangle _{\Omega
_{L}}-\nu \overset{m}{\underset{j=1}{\sum }}d_{i,j}u_{Lm}^{j}\left( t\right)
+
\end{equation*}%
\begin{equation}
\overset{m}{\underset{j,k=1}{\sum }}h_{ijk}u_{Lm}^{j}\left( t\right)
u_{Lm}^{k}\left( t\right) ,  \label{5.6}
\end{equation}%
\begin{equation*}
u_{Lm}^{i}\left( 0\right) =u_{0Lm}^{i},\quad i=1,...,m,\quad m=1,\ 2,...
\end{equation*}%
where $u_{0Lm}^{i}$ is $i^{th}$ component of $u_{0L}$ in representation $%
u_{0L}=\overset{\infty }{\underset{k=1}{\sum }}u_{0Lm}^{k}w_{k}$.

The Cauchy problem for the system of the nonlinear ordinary differential
equations (\ref{5.6}) has solution, which defined on whole of interval $%
(0,T] $ due of uniformity of estimations received in subsections \ref%
{SS_I.4.1} and \ref{SS_I.4.2}. Consequently, the approximate solutions $%
u_{Lm}$ exist and belong to a bounded subset of $W^{1,\frac{4}{3}}\left(
0,T;V^{\ast }\left( \Omega _{L}\right) \right) $ for every $m=1,\ 2,...$
since the right side of (\ref{5.6}) belong to a bounded subset of $%
L^{2}\left( 0,T;V^{\ast }\left( \Omega _{L}\right) \right) $ as were proved
in subsections \ref{SS_I.4.1} and \ref{SS_I.4.2}, and also by virtue of the
next lemma.

\begin{lemma}
(\cite{Tem1}) Let $X$ be a given Banach space with dual $X^{\ast }$ and let $%
u$ and $g$ be two functions belonging to $L^{1}\left( a,b;X\right) $. Then,
the following three conditions are equivalent

\textit{(i)} $u$ is a. e. equal to a primitive function of $g$, 
\begin{equation*}
u\left( t\right) =\xi +\underset{a}{\overset{t}{\int }}g\left( s\right)
ds,\quad \xi \in X,\quad \text{a.e. }t\in \left[ a,b\right]
\end{equation*}%
\textit{(ii)} For each test function $\varphi \in D\left( \left( a,b\right)
\right) $, 
\begin{equation*}
\underset{a}{\overset{b}{\int }}u\left( t\right) \varphi ^{\prime }\left(
t\right) dt=-\underset{a}{\overset{b}{\int }}g\left( t\right) \varphi \left(
t\right) dt,\quad \varphi ^{\prime }=\frac{d\varphi }{dt}
\end{equation*}%
(iii) For each $\eta \in X^{\ast }$, 
\begin{equation*}
\frac{d}{dt}\left\langle u,\eta \right\rangle =\left\langle g,\eta
\right\rangle
\end{equation*}%
in the scalar distribution sense, on $(a,b)$. If \textit{(i) - (iii)} are
satisfied $u$, in particular, is a. e. equal to a continuous function from $%
[a,b]$ into $X$.
\end{lemma}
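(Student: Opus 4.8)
The plan is to establish the cycle of implications (i) $\Rightarrow$ (ii) $\Rightarrow$ (iii) $\Rightarrow$ (i) and then to read off the final ``moreover'' assertion from (i); the first two implications are essentially formal, and the real content lies in (iii) $\Rightarrow$ (i). For (i) $\Rightarrow$ (ii), assuming $u(t)=\xi+\int_{a}^{t}g(s)\,ds$, I would substitute this expression into $\int_{a}^{b}u(t)\varphi^{\prime}(t)\,dt$, observe that $\int_{a}^{b}\xi\varphi^{\prime}(t)\,dt=\xi\,(\varphi(b)-\varphi(a))=0$ for $\varphi\in D((a,b))$, and apply Fubini's theorem to the Bochner integral $\int_{a}^{b}(\int_{a}^{t}g(s)\,ds)\varphi^{\prime}(t)\,dt$ to rewrite it as $\int_{a}^{b}g(s)(\int_{s}^{b}\varphi^{\prime}(t)\,dt)\,ds=-\int_{a}^{b}g(s)\varphi(s)\,ds$, using $\varphi(b)=0$. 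For (ii) $\Rightarrow$ (iii), I fix $\eta\in X^{\ast}$; since a bounded linear functional commutes with the Bochner integral, applying $\eta$ to the identity in (ii) gives $\int_{a}^{b}\langle u(t),\eta\rangle\varphi^{\prime}(t)\,dt=-\int_{a}^{b}\langle g(t),\eta\rangle\varphi(t)\,dt$, which is precisely $\frac{d}{dt}\langle u,\eta\rangle=\langle g,\eta\rangle$ in the scalar distribution sense on $(a,b)$.

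The substantive step is (iii) $\Rightarrow$ (i). I would set $w(t):=\int_{a}^{t}g(s)\,ds$, which is a well-defined, absolutely continuous $X$-valued function because $g\in L^{1}(a,b;X)$; since $w$ satisfies (i) with $\xi=0$, the already-established implications yield $\frac{d}{dt}\langle w,\eta\rangle=\langle g,\eta\rangle$ for every $\eta\in X^{\ast}$. Subtracting, the function $h:=u-w\in L^{1}(a,b;X)$ satisfies $\frac{d}{dt}\langle h,\eta\rangle=0$ in the scalar distribution sense for every $\eta\in X^{\ast}$. Now fix $\varphi_{0}\in D((a,b))$ with $\int_{a}^{b}\varphi_{0}(t)\,dt=1$; for an arbitrary $\varphi\in D((a,b))$ the function $\psi(t):=\int_{a}^{t}(\varphi(s)-(\int_{a}^{b}\varphi)\varphi_{0}(s))\,ds$ again belongs to $D((a,b))$, and $\varphi=\psi^{\prime}+(\int_{a}^{b}\varphi)\varphi_{0}$. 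Testing $\langle h,\eta\rangle$ against $\psi$ shows $\int_{a}^{b}\langle h(t),\eta\rangle\psi^{\prime}(t)\,dt=0$, hence $\langle\int_{a}^{b}h(t)\psi^{\prime}(t)\,dt,\eta\rangle=0$ for all $\eta$, so $\int_{a}^{b}h(t)\psi^{\prime}(t)\,dt=0$ in $X$. Consequently $\int_{a}^{b}h(t)\varphi(t)\,dt=(\int_{a}^{b}\varphi)\,\xi$ with $\xi:=\int_{a}^{b}h(t)\varphi_{0}(t)\,dt\in X$, i.e. $\int_{a}^{b}(h(t)-\xi)\varphi(t)\,dt=0$ for every $\varphi\in D((a,b))$. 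The vector-valued fundamental lemma of the calculus of variations then gives $h=\xi$ a.e.: applying an arbitrary $\eta$ yields $\langle h(t)-\xi,\eta\rangle=0$ a.e., and approximating characteristic functions of subintervals of $(a,b)$ by test functions yields $\int_{E}(h-\xi)=0$ for every measurable $E\subset(a,b)$, whence $h=\xi$ a.e. Thus $u(t)=\xi+\int_{a}^{t}g(s)\,ds$ a.e., which is (i).

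Finally, the ``moreover'' clause is immediate from (i): the map $t\mapsto\int_{a}^{t}g(s)\,ds$ is continuous, indeed absolutely continuous, from $[a,b]$ into $X$ by absolute continuity of the Bochner integral, so $\xi+\int_{a}^{t}g(s)\,ds$ is a continuous $X$-valued representative of $u$. I expect the main obstacle to be precisely the passage, inside (iii) $\Rightarrow$ (i), from the family of scalar facts ``$\langle h,\eta\rangle$ is a.e. constant for each fixed $\eta$'', where the exceptional null set depends on $\eta$, to the single vector-valued conclusion ``$h$ equals a fixed element of $X$ a.e.''; the decomposition $\varphi=\psi^{\prime}+(\int_{a}^{b}\varphi)\varphi_{0}$ is exactly the device that resolves this, requiring no separability assumption on $X$.
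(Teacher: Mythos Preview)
Your proof is correct and is essentially the standard argument (this is in fact the proof given in Temam's book, from which the lemma is quoted). Note, however, that the present paper does not prove this lemma at all: it is stated with the citation \cite{Tem1} and used as a black box in Subsection~\ref{SS_I.4.3}, so there is no ``paper's own proof'' to compare against. Your cycle (i) $\Rightarrow$ (ii) $\Rightarrow$ (iii) $\Rightarrow$ (i), with the decomposition $\varphi=\psi'+(\int_{a}^{b}\varphi)\varphi_{0}$ as the key device in (iii) $\Rightarrow$ (i), is exactly the classical proof; the only remark is that the step ``$\int_{E}(h-\xi)=0$ for every measurable $E$ implies $h=\xi$ a.e.'' uses that a Bochner-integrable function is essentially separably valued (Pettis' theorem), which is why no explicit separability hypothesis on $X$ is needed.
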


It isn%
\'{}%
t difficult to see that if take $\forall v\in V\left( \Omega _{L}\right) $
instead of $w_{k}$ and pass to limit according to $m\longrightarrow \infty $
in equation (\ref{5.5}) (may be by subsequence $\left\{ u_{Lm_{\mathit{l}%
}}\right\} _{\mathit{l}=1}^{\infty }$ of this sequence, is known that such
subsequence exists) then we get \ 
\begin{equation}
\left\langle \frac{d}{dt}u_{L},v\right\rangle _{\Omega _{L}}=\left\langle
f_{L}+\nu \Delta u_{L}-\chi ,v\right\rangle _{\Omega _{L}},  \label{5.6.1}
\end{equation}%
due of fullness of the class $\left\{ w_{i}\right\} _{i=1}^{\infty }$ in $%
V\left( \Omega _{L}\right) .$ Where function $\chi $ belongs to $L^{\frac{4}{%
3}}\left( 0,T;V^{\ast }\left( \Omega _{L}\right) \right) $ and is determined
by equality 
\begin{equation*}
\underset{\mathit{l}\longrightarrow \infty }{\lim }\left\langle B\left(
u_{Lm_{\mathit{l}}}\right) ,v\right\rangle _{\Omega _{L}}=\left\langle \chi
,v\right\rangle _{\Omega _{L}}
\end{equation*}%
that shown in the above section. So, we obtain that in (\ref{5.6.1}) the
right side belong to $L^{\frac{4}{3}}\left( 0,T\right) $ then the left side
also belongs to $L^{2}\left( 0,T\right) $ according to above a priori
estimates and Proposition \ref{P_3.1}, i. e. 
\begin{equation*}
\frac{du_{L}}{dt}\in L^{\frac{4}{3}}\left( 0,T;V^{\ast }\left( \Omega
_{L}\right) \right) .
\end{equation*}%
Consequently, the following result is proven.

\begin{proposition}
\label{P_4.1}Under above mentioned conditions $u_{L}^{\prime }$ belongs to a
bounded subset of the space $L^{\frac{4}{3}}\left( 0,T;V^{\ast }\left(
\Omega _{L}\right) \right) $.
\end{proposition}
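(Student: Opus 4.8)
The plan is to read the required bound directly off the limit equation (\ref{5.6.1}), rewritten as
\begin{equation*}
\frac{du_{L}}{dt}=f_{L}+\nu \Delta u_{L}-\chi \quad \text{in }V^{\ast }\left( \Omega _{L}\right) ,\ \text{a.e. }t\in \left( 0,T\right) .
\end{equation*}
It then suffices to bound each of the three terms on the right-hand side in a norm of the form $L^{q}\left( 0,T;V^{\ast }\left( \Omega _{L}\right) \right) $ and to retain the worst exponent.

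First I would treat $f_{L}$: by the hypotheses of Theorem \ref{Th_1} and the restriction to the cross-section carried out in Section \ref{Sec_I.3}, one has $f_{L}\in L^{2}\left( 0,T;H\left( \Omega _{L}\right) \right) \subset L^{2}\left( 0,T;V^{\ast }\left( \Omega _{L}\right) \right) $, with norm controlled by that of $f$. For the linear elliptic term, the operator $u\mapsto -\nu \Delta u$ with the coefficients appearing in (\ref{3.3}) maps $V\left( \Omega _{L}\right) $ boundedly into $V^{\ast }\left( \Omega _{L}\right) $, so the a priori estimate (\ref{3.12}) gives $\nu \Delta u_{L}\in L^{2}\left( 0,T;V^{\ast }\left( \Omega _{L}\right) \right) $ with a bound depending only on $f_{L}$, $u_{0L}$ and $mes\,\Omega $. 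For the nonlinear term, Proposition \ref{P_3.1} together with the identification $\lim_{\mathit{l}\rightarrow \infty }\left\langle B\left( u_{Lm_{\mathit{l}}}\right) ,v\right\rangle _{\Omega _{L}}=\left\langle \chi ,v\right\rangle _{\Omega _{L}}$ shows that $\chi $ lies in a bounded subset of $L^{4/3}\left( 0,T;V^{\ast }\left( \Omega _{L}\right) \right) $; the quantitative bound (\ref{3.15}) is inherited by $\chi $ by weak lower semicontinuity of the norm, using the uniform bounds on $\left\{ u_{Lm_{\mathit{l}}}\right\} $ in $V\left( Q_{L}^{T}\right) $ established in Subsections \ref{SS_I.4.1} and \ref{SS_I.4.2}.

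Combining these facts on the finite interval $\left( 0,T\right) $, where $L^{2}\left( 0,T;V^{\ast }\right) \hookrightarrow L^{4/3}\left( 0,T;V^{\ast }\right) $ continuously, yields $u_{L}^{\prime }\in L^{4/3}\left( 0,T;V^{\ast }\left( \Omega _{L}\right) \right) $ with norm bounded purely in terms of the data, which is the assertion. The step I expect to be the main obstacle is not this final bookkeeping but the passage to the limit that produced (\ref{5.6.1}): one must use the uniform estimates (\ref{3.11}), (\ref{3.12}), (\ref{3.15}) to extract a subsequence $\left\{ u_{Lm_{\mathit{l}}}\right\} $ converging weakly in $L^{2}\left( 0,T;V\left( \Omega _{L}\right) \right) $, weakly-$\ast $ in $L^{\infty }\left( 0,T;H\left( \Omega _{L}\right) \right) $ and strongly in $L^{2}\left( 0,T;H\left( \Omega _{L}\right) \right) $ (the last via Lemma \ref{L_1.1}, legitimate since $\Omega _{L}$ is Lipschitz, so the embedding $V\left( \Omega _{L}\right) \subset H\left( \Omega _{L}\right) $ is compact), and then to check that this strong convergence forces the weak limit of the trilinear term to equal $B\left( u_{L}\right) $, i.e. $\chi =B\left( u_{L}\right) $. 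Once that identification is secured, the bound on $u_{L}^{\prime }$ follows at once from the three estimates listed above.
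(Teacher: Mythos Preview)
Your proposal is correct and follows essentially the same route as the paper: both arguments read the bound off the limit equation (\ref{5.6.1}) by controlling $f_{L}$ and $\nu\Delta u_{L}$ in $L^{2}\left(0,T;V^{\ast}\right)$ and the nonlinear term $\chi$ in $L^{\frac{4}{3}}\left(0,T;V^{\ast}\right)$ via Proposition \ref{P_3.1}, with the passage to the limit in the Galerkin scheme (using the uniform bounds (\ref{3.11}), (\ref{3.12}), (\ref{3.15}) and the compactness of Lemma \ref{L_1.1}) as the real work. Your write-up is in fact a bit more explicit than the paper's about the embedding $L^{2}\left(0,T;V^{\ast}\right)\hookrightarrow L^{\frac{4}{3}}\left(0,T;V^{\ast}\right)$ and about why the bound on $\chi$ survives the limit (weak lower semicontinuity), but the strategy is identical.
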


From above results of this section by virtue of the abstract form of
Riesz-Fischer theorem follows

\begin{corollary}
\label{C_4.1}Under above mentioned conditions function $u_{L}$ belongs to a
bounded subset of the space $\mathcal{V}\left( Q_{L}^{T}\right) $, where 
\begin{equation}
\mathcal{V}\left( Q_{L}^{T}\right) \equiv V\left( Q_{L}^{T}\right) \cap W^{1,%
\frac{4}{3}}\left( 0,T;V^{\ast }\left( \Omega _{L}\right) \right) .
\label{3.16}
\end{equation}
\end{corollary}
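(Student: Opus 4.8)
The plan is to read off the corollary from the uniform bounds for the Faedo--Galerkin approximations $u_{Lm}$ assembled in the three preceding subsections, combined with a routine weak-compactness argument. First I would collect the estimates. Inequalities (3.11) and (3.12) say that $\{u_{Lm}\}$ lies in a bounded subset of $V(Q_L^T)=L^2(0,T;V(\Omega_L))\cap L^\infty(0,T;H(\Omega_L))$, with a constant depending only on the data $f_L$, $u_{0L}$ and on $\Omega$, not on $m$; Proposition \ref{P_3.1}, through (3.15), gives a uniform bound for $\{B(u_{Lm})\}$ in $L^{4/3}(0,T;V^\ast(\Omega_L))$; and Proposition \ref{P_4.1}, obtained from equation (5.6.1), yields a uniform bound for $\{u_{Lm}^\prime\}$ in $L^{4/3}(0,T;V^\ast(\Omega_L))$. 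Hence $\{u_{Lm}\}$ is bounded in $\mathcal{V}(Q_L^T)\equiv V(Q_L^T)\cap W^{1,4/3}(0,T;V^\ast(\Omega_L))$.

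Next, since $L^2(0,T;V(\Omega_L))$ and $W^{1,4/3}(0,T;V^\ast(\Omega_L))$ are reflexive and $L^\infty(0,T;H(\Omega_L))$ is the dual of a separable space, I would extract a subsequence (not relabelled) and a limit $u_L$ with $u_{Lm}\rightharpoonup u_L$ weakly in $L^2(0,T;V(\Omega_L))$, weakly-$\ast$ in $L^\infty(0,T;H(\Omega_L))$, and weakly in $W^{1,4/3}(0,T;V^\ast(\Omega_L))$. By weak (respectively weak-$\ast$) lower semicontinuity of these norms, $u_L$ satisfies the same estimates as the approximations, so $u_L$ belongs to a bounded subset of $\mathcal{V}(Q_L^T)$, which is exactly (3.16); this is the role of the abstract Riesz--Fischer (completeness) argument invoked in the statement, since it guarantees that the weak limit actually lies in the product of the Lebesgue--Bochner spaces in question and inherits the bounds.

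To be sure the corollary is not vacuous --- i.e. that this $u_L$ is genuinely a solution of (3.3)--(3.5) in the sense of (3.6) --- I would at this point invoke Lemma \ref{L_1.1} (Aubin--Lions) with $B_0=V(\Omega_L)$, $B=H(\Omega_L)$, $B_1=V^\ast(\Omega_L)$, $p_0=2$, $p_1=4/3$: because $V(\Omega_L)\subset H(\Omega_L)$ is compact, the inclusion $\mathcal{V}(Q_L^T)\subset L^2(0,T;H(\Omega_L))$ is compact, so the subsequence converges strongly in $L^2(0,T;H(\Omega_L))$. That strong convergence lets one identify the weak limit $\chi$ of $B(u_{Lm})$ appearing in (5.6.1) with $B(u_L)$, pass to the limit in (5.5) for each fixed $w_j$ and hence, by totality of $\{w_i\}$, for every $v\in V(\Omega_L)$, while the initial condition passes in the weak sense; the time-derivative bound then transfers to $u_L^\prime$ from the limiting equation.

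I expect the only genuinely delicate point to be the passage to the limit in the trilinear term $b_L(u_{Lm},u_{Lm},w_j)$. Using the divergence-free identity (3.4) and the rewriting (3.13) one puts the derivative on a single factor and controls the remaining two in $L^4(\Omega_L)$, and then combines the weak $L^2$ convergence of $D_k u_{Lm}$ with the strong $L^2(0,T;H(\Omega_L))$ convergence of $u_{Lm}$; here the two-dimensionality of $\Omega_L$, which is what makes the Ladyzhenskaya / Gagliardo--Nirenberg estimate (3.15) available, is essential. Once $\chi=B(u_L)$ is established, the remaining verifications are routine, and the uniform-in-$m$ estimates already recorded deliver the boundedness claimed in (3.16).
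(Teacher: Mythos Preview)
Your proposal is correct and follows essentially the same route as the paper: the corollary is immediate from the uniform bounds collected in Subsections~\ref{SS_I.4.1}--\ref{SS_I.4.3} (estimates (3.11)--(3.12), Proposition~\ref{P_3.1}, and Proposition~\ref{P_4.1}) together with a standard weak-compactness / Riesz--Fischer argument for the limit $u_L$. Your third and fourth paragraphs, on the identification $\chi=B(u_L)$ via Aubin--Lions and the passage to the limit in the trilinear term, are not needed for Corollary~\ref{C_4.1} itself; the paper defers exactly that step to the next subsection (Proposition~\ref{P_3.2}), so you have simply anticipated material that is proved separately.
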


Thus for the proof that $u_{L}$ is the solution of poblem (\ref{3.3}) - (\ref%
{3.5}) or (\ref{3.6}) remains to show that $\chi =B\left( u_{L}\right) $ or $%
\left\langle \chi ,v\right\rangle _{\Omega _{L}}=b_{L}\left(
u_{L},u_{L},v\right) $ for $\forall v\in V\left( \Omega _{L}\right) $. \ 

\subsection{\label{SS_I.4.4}Weakly compactness of operator $B$}

\begin{proposition}
\label{P_3.2}Operator $B:V\left( Q_{L}^{T}\right) \longrightarrow L^{\frac{4%
}{3}}\left( 0,T;V^{\ast }\left( \Omega _{L}\right) \right) $ is weakly
compact operator, i.e. any weakly convergent sequence $\left\{
u_{L}^{m}\right\} _{1}^{\infty }\subset V\left( Q_{L}^{T}\right) $ posses
such subsequence $\left\{ u_{L}^{m_{k}}\right\} _{1}^{\infty }\subset
\left\{ u_{L}^{m}\right\} _{1}^{\infty }$, that $\left\{ B\left(
u_{L}^{m_{k}}\right) \right\} _{1}^{\infty }$ weakly converged in $L^{\frac{4%
}{3}}\left( 0,T;V^{\ast }\left( \Omega _{L}\right) \right) $.
\end{proposition}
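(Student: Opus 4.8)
The plan is to establish weak compactness of $B$ on $V(Q_L^T)$ by combining the boundedness estimate \eqref{3.15} from Proposition \ref{P_3.1} with the Aubin--Lions compactness embedding of Lemma \ref{L_1.1}. First I would take an arbitrary sequence $\{u_L^m\}\subset V(Q_L^T)$ that converges weakly in $V(Q_L^T)$; in particular it is bounded in $L^2(0,T;V(\Omega_L))\cap L^\infty(0,T;H(\Omega_L))$. By \eqref{3.15}, the sequence $\{B(u_L^m)\}$ is then bounded in $L^{4/3}(0,T;V^\ast(\Omega_L))$, which is a reflexive Banach space; hence, after passing to a subsequence (not relabeled), $B(u_L^m)\rightharpoonup \chi$ weakly in $L^{4/3}(0,T;V^\ast(\Omega_L))$ for some $\chi$. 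So the only real content is to identify $\chi$ with $B$ of the weak limit, i.e. to show that $B$ is in fact \emph{demiclosed} along the given weakly convergent sequence; this is what makes the image sequence weakly convergent to the ``right'' limit and, in combination with the bound, yields weak compactness.

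The key step is an upgrade of weak to strong convergence in an intermediate norm. Since $\{u_L^m\}$ is bounded in $L^2(0,T;V(\Omega_L))$ and $\{\tfrac{d}{dt}u_L^m\}$ is bounded in $L^{4/3}(0,T;V^\ast(\Omega_L))$ — the latter by the a priori estimates of Subsection \ref{SS_I.4.3} and Proposition \ref{P_4.1}, which apply to the Galerkin approximations and pass to any bounded sequence satisfying the same estimates, or directly because $B(u_L^m)$ together with $\nu\Delta u_L^m$ and $f_L$ control $\tfrac{d}{dt}u_L^m$ — Lemma \ref{L_1.1} applied with $B_0=V(\Omega_L)$, $B=H(\Omega_L)$, $B_1=V^\ast(\Omega_L)$ gives that $\{u_L^m\}$ is relatively compact in $L^2(0,T;H(\Omega_L))$. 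Passing to a further subsequence, $u_L^m\to u_L$ strongly in $L^2(0,T;H(\Omega_L))$, hence $u_L^m\to u_L$ in $(L^2(Q_L^T))^3$ and, along a sub-subsequence, a.e. in $Q_L^T$. Using the divergence-free rewriting \eqref{3.13} of the trilinear form, for any $v\in V(\Omega_L)$ and any $\varphi\in C_0^\infty(0,T)$ one writes $\langle B(u_L^m),v\rangle_{\Omega_L}$ with all derivatives falling on $v$ (or splitting one factor off in $L^4$), so that the product $(u_{L1}^m-a_1^{-1}u_{L3}^m)D_1 u_{Li}^m v_i$ type terms are handled by: strong $L^2$-convergence of the zeroth-order factors $u_L^m$, weak $L^2$-convergence of $D_j u_L^m$ (from the $L^2(0,T;V)$ bound), and boundedness of the test object; the embedding $V(\Omega_L)\subset L^4(\Omega_L)$ in the two-dimensional cross-section controls the remaining integrability. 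This identifies $\lim_m \int_0^T\langle B(u_L^m),v\rangle_{\Omega_L}\varphi\,dt = \int_0^T b_L(u_L,u_L,v)\varphi\,dt$, i.e. $\chi=B(u_L)$.

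Finally, since the subsequence was arbitrary and the limit $\chi=B(u_L)$ is the same for every weak$^\ast$-convergent sub-subsequence (because the weak limit $u_L$ in $V(Q_L^T)$ is unique), a standard subsequence argument shows the \emph{whole} extracted sequence $B(u_L^{m_k})$ converges weakly in $L^{4/3}(0,T;V^\ast(\Omega_L))$, which is exactly the assertion. I expect the main obstacle to be the passage to the limit in the nonlinear term: one must be careful that the product of a weakly convergent factor ($D_j u_L^m$) with factors that are only \emph{strongly} $L^2$-convergent is legitimate, which is precisely why the divergence-free form \eqref{3.13} (moving a derivative onto the smooth test function $v$) is used, together with the two-dimensionality of $\Omega_L$ that makes $V(\Omega_L)\hookrightarrow L^4(\Omega_L)$ available. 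A secondary point requiring care is that the estimate for $u_L^{m\prime}$ in $L^{4/3}(0,T;V^\ast)$ must be available for an \emph{arbitrary} bounded sequence in $V(Q_L^T)$, not only for Galerkin iterates; this follows because $\nu\Delta u_L^m\in L^2(0,T;V^\ast)$, $B(u_L^m)\in L^{4/3}(0,T;V^\ast)$ by \eqref{3.15}, and $f_L\in L^2(0,T;V^\ast)$, so the identity $u_L^{m\prime}=f_L+\nu\Delta u_L^m - B(u_L^m)$ (valid in $V^\ast$ for solutions) transfers the bound.
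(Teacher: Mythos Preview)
Your proposal follows essentially the same route as the paper's own proof: boundedness of $B$ from Proposition~\ref{P_3.1} plus reflexivity of $L^{4/3}(0,T;V^\ast)$ yields a weakly convergent subsequence $B(u_L^{m_k})\rightharpoonup\chi$; Aubin--Lions (Lemma~\ref{L_1.1}) upgrades a subsequence of $u_L^m$ to strong convergence in $L^2(0,T;H)$; and then one passes to the limit in the trilinear form against smooth test functions, finally extending by density. One small divergence: the paper carries out the limit passage by integrating by parts so that \emph{all} spatial derivatives sit on the test function $v\in\mathcal{C}^1(\overline{Q}_L)$, reducing the nonlinear term to sums of products $u_{Li}^m u_{Lj}^m\, D_k v_i$ (this is \eqref{3.17a}), and then uses strong $L^2$ convergence of one factor together with weak-$\ast$ $L^\infty(0,T;H)$ convergence of the other. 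Your description instead keeps a derivative on $u_L^m$ and pairs weak $L^2$ convergence of $D_j u_L^m$ with strong $L^2$ convergence of $u_{Lj}^m v_i$; this is equally valid once $v$ is smooth, but your sentence mixes the two variants (``all derivatives falling on $v$'' followed by terms containing $D_1 u_{Li}^m$), which you should clean up. You are also right to flag the issue of the bound on $u_L^{m\prime}$: the paper tacitly works in $\mathcal{V}(Q_L^T)$ rather than $V(Q_L^T)$ at this step, and your observation that the equation supplies the bound only for \emph{solutions} is the honest way to close that gap in the intended application.
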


\begin{proof}
Let sequence $\left\{ u_{L}^{m}\right\} _{1}^{\infty }\subset V\left(
Q_{L}^{T}\right) $ be weakly converge to $u_{L}^{0}$ in $V\left(
Q_{L}^{T}\right) $. Then there exists such subsequence $\left\{
u_{L}^{m_{k}}\right\} _{1}^{\infty }\subset \left\{ u_{L}^{m}\right\}
_{1}^{\infty }$ that $u_{L}^{m_{k}}\longrightarrow u_{L}^{0}$ in $%
L^{2}\left( 0,T;H\right) $, due of the known theorems on the compactness of
the embedding, particullary, as known the following embedding 
\begin{equation*}
\mathcal{V}\left( Q_{L}^{T}\right) \equiv L^{2}\left( 0,T;V\left( \Omega
_{L}\right) \right) \cap W^{1,\frac{4}{3}}\left( 0,T;V^{\ast }\left( \Omega
_{L}\right) \right) \subset L^{2}\left( 0,T;H\right)
\end{equation*}%
is compact (see, e. g. \cite{Lio1}, \cite{Tem1}, \cite{Sol3}).

Actually it is enough to show that the operator defined by expression $%
\underset{j=1}{\overset{3}{\sum }}u_{Lj}D_{j}u_{L}$ is weakly compact from $%
\mathcal{V}\left( Q_{L}^{T}\right) $ to $L^{\frac{4}{3}}\left( 0,T;V^{\ast
}\left( \Omega _{L}\right) \right) $. From a priori estimations and
Proposition \ref{P_3.1} follow that operator $B:\mathcal{V}\left(
Q_{L}^{T}\right) \longrightarrow L^{\frac{4}{3}}\left( 0,T;V^{\ast }\left(
\Omega _{L}\right) \right) $ is bounded, i.e. the image of operator $B$ of
each bounded subset of space $\mathcal{V}\left( Q_{L}^{T}\right) $ is the
bounded subset of space $L^{\frac{4}{3}}\left( 0,T;V^{\ast }\left( \Omega
_{L}\right) \right) $.\ 

From above compactness theorem follows the sequence $\left\{
u_{L}^{m}\right\} _{1}^{\infty }$ posses some subsequence $\left\{
u_{L}^{m_{k}}\right\} _{1}^{\infty }\subset \left\{ u_{L}^{m}\right\}
_{1}^{\infty }$ strongly convergent to some element $u_{L}$ of $L^{2}\left(
0,T;H\right) $ in the space $L^{2}\left( 0,T;H\right) $. Consequently, $%
B\left( \left\{ u_{L}^{m_{k}}\right\} _{1}^{\infty }\right) $ belongs of
bounded subset of space $L^{\frac{4}{3}}\left( 0,T;V^{\ast }\left( \Omega
_{L}\right) \right) $. Thence lead that there is such element $\chi \in L^{%
\frac{4}{3}}\left( 0,T;V^{\ast }\left( \Omega _{L}\right) \right) $ that
sequence $B\left( u_{L}^{m_{k}}\right) $ weakly converges to $\chi $ when $%
m_{k}\nearrow \infty $, i.e. 
\begin{equation}
B\left( u_{L}^{m_{k}}\right) \rightharpoonup \chi \quad \text{ in }L^{\frac{4%
}{3}}\left( 0,T;V^{\ast }\left( \Omega _{L}\right) \right)  \label{3.17}
\end{equation}%
due of the reflexivity of this space (there exists, at least, such
subsequence that this occurs).

If we set the vector space 
\begin{equation*}
\mathcal{C}^{1}\left( \overline{Q}_{L}\right) \equiv \left\{ v\left\vert \
v_{i}\in C^{1}\left( \left[ 0,T\right] ;C_{0}^{1}\left( \overline{\Omega _{L}%
}\right) \right) ,\right. i=1,2,3\right\}
\end{equation*}%
and consider the trilinear form 
\begin{equation*}
\underset{0}{\overset{T}{\int }}\left\langle B\left( u_{L}^{m}\right)
,v\right\rangle _{\Omega _{L}}dt=\underset{0}{\overset{T}{\int }}b\left(
u_{L}^{m},u_{L}^{m},v\right) dt=\underset{0}{\overset{T}{\int }}\left\langle 
\underset{j=1}{\overset{3}{\sum }}u_{Lj}^{m}D_{j}u_{L}^{m},v\right\rangle
_{\Omega _{L}}dt=
\end{equation*}%
for $v\in \mathcal{C}^{1}\left( \overline{Q}_{L}\right) $, then we get 
\begin{equation}
-\underset{i=1}{\overset{3}{\sum }}\underset{0}{\overset{T}{\int }}\underset{%
P_{x_{3}}\Omega _{L}}{\int }\left[ \left(
u_{Li}^{m}u_{L1}^{m}-a_{1}^{-1}u_{Li}^{m}u_{L3}^{m}\right) D_{1}v_{i}+\left(
u_{Li}^{m}u_{L2}^{m}-a_{2}^{-1}u_{Li}^{m}u_{L3}^{m}\right) D_{2}v_{i}\right]
dx_{1}dx_{2}dt.  \label{3.17a}
\end{equation}

according to (\ref{3.13}). Now if we take arbitrary term in this sum
separately then it isn't difficult to see that the following convergences
are true, because $u_{Li}^{m_{k}}\longrightarrow u_{Li}$ in $L^{2}\left(
0,T;H\right) $ and $u_{Li}^{m_{k}}\rightharpoonup u_{Li}$ in $L^{\infty
}\left( 0,T;H\right) $ $\ast -$ weakly\ since $u_{L}^{m}$ belong to a
bounded subset of $\mathcal{V}\left( Q_{L}^{T}\right) $ and (\ref{3.17a}) is
fulfill for each term.

Thus passing to the limit when $m_{k}\nearrow \infty $ we obtain 
\begin{equation*}
\chi =B\left( u_{L}\right) \Longrightarrow B\left( u_{L}^{m_{k}}\right)
\rightharpoonup B\left( u_{L}\right) \quad \text{ in the distribution sense}.
\end{equation*}
Whence using the density of $\mathcal{C}^{1}\left( \overline{Q}_{L}\right) $
in $\mathcal{V}\left( Q_{L}^{T}\right) $, and as $B\left(
u_{L}^{m_{k}}\right) \rightharpoonup \chi $ takes place in the space $L^{%
\frac{4}{3}}\left( 0,T;V^{\ast }\left( \Omega _{L}\right) \right) $ we get
that $\chi =B\left( u_{L}\right) $ also takes place in this space.
\end{proof}

Consequently, we proved the existence of the function $u_{L}\in \mathcal{V}%
\left( Q_{L}^{T}\right) $ that satisfies equation (\ref{3.6}) by applying to
this problem of the Faedo-Galerkin method and using the above mentioned
results.

\subsection{\label{SS_I.4.5}Realisation of the initial condition}

We will lead the proof of the realisation of initial condition according to
same way as in \cite{Tem1} (see, also \cite{Lad1}, \cite{Lio1}).

Let $\phi $ be a continuously differentiable function on $[0,T]$ with $\phi
(T)=0$. With multiplying (\ref{5.5}) by $\phi (t)$, and then the first term
integrating by parts we leads to equation 
\begin{equation*}
-\underset{0}{\overset{T}{\int }}\left\langle u_{Lm},\frac{d}{dt}\phi
(t)w_{j}\right\rangle _{\Omega _{L}}dt=\underset{0}{\overset{T}{\int }}%
\left\langle \nu \Delta u_{Lm},\phi (t)w_{j}\right\rangle _{\Omega _{L}}dt+
\end{equation*}%
\begin{equation*}
\underset{0}{\overset{T}{\int }}b\left( u_{Lm},u_{Lm},\phi (t)w_{j}\right)
dt+\underset{0}{\overset{T}{\int }}\left\langle f_{L},\phi
(t)w_{j}\right\rangle _{\Omega _{L}}dt+\left\langle u_{0Lm},\phi
(0)w_{j}\right\rangle _{\Omega _{L}}.
\end{equation*}

One can pass to the limit with respect to subsequence $\left\{
u_{Lm_{l}}\right\} _{l=1}^{\infty }$ of the sequence $\left\{ u_{Lm}\right\}
_{m=1}^{\infty }$ in the equality mentioned above owing to the results
proved in the previous subsections. Then we find the equation 
\begin{equation*}
-\underset{0}{\overset{T}{\int }}\left\langle u_{L},\frac{d}{dt}\phi
(t)w_{j}\right\rangle _{\Omega _{L}}dt=\underset{0}{\overset{T}{\int }}%
\left\langle \nu \Delta u_{L},\phi (t)w_{j}\right\rangle _{\Omega _{L}}dt+
\end{equation*}%
\begin{equation}
\underset{0}{\overset{T}{\int }}b\left( u_{L},u_{L},\phi (t)w_{j}\right) dt+%
\underset{0}{\overset{T}{\int }}\left\langle f_{L},\phi
(t)w_{j}\right\rangle _{\Omega _{L}}dt+\left\langle u_{0L},\phi
(0)w_{j}\right\rangle _{\Omega _{L}},  \label{5.8}
\end{equation}%
that holds for each $w_{j}$, $j=1,2,...$. Consequently, this equality holds
for any finite linear combination of the $w_{j}$ and moreover due of
continuity (\ref{5.8}) remains true and for any $v\in V\left( \Omega
_{L}\right) $.

Whence, one can draw conclusion that function $u_{L}$ satisfies equation (%
\ref{3.6}) in the distribution sense.

Now if multiply (\ref{3.6}) by $\phi (t)$, and integrate with respect to $t$
after integrating the first term by parts, then we get 
\begin{equation*}
-\underset{0}{\overset{T}{\int }}\left\langle u_{L},v\frac{d}{dt}\phi
(t)\right\rangle _{\Omega _{L}}dt-\underset{0}{\overset{T}{\int }}%
\left\langle \nu \Delta u_{L},\phi (t)v\right\rangle _{\Omega _{L}}dt+
\end{equation*}%
\begin{equation*}
\underset{0}{\overset{T}{\int }}\left\langle \underset{j=1}{\overset{3}{\sum 
}}u_{Lj}D_{j}u_{L},\phi (t)v\right\rangle _{\Omega _{L}}dt=\underset{0}{%
\overset{T}{\int }}\left\langle f_{L},\phi (t)v\right\rangle _{\Omega
_{L}}dt+\left\langle u_{L}\left( 0\right) ,\phi (0)v\right\rangle _{\Omega
_{L}}.
\end{equation*}

If we compare this with (\ref{5.8}) after replacing $w_{j}$ with any $v\in
V\left( \Omega _{L}\right) $ then we obtain 
\begin{equation*}
\phi (0)\left\langle u_{L}\left( 0\right) -u_{0L},v\right\rangle _{\Omega
_{L}}=0.
\end{equation*}%
Whence, we get the realisation of the initial condition by virtue of
arbitrariness of $v\in V\left( \Omega _{L}\right) $ and $\phi $, since
function $\phi $ one can choose as $\phi (0)\neq 0$.

Consequently, the following result is proven.

\begin{theorem}
\label{Th_2.1}Under above mentioned conditions for any 
\begin{equation*}
u_{0L}\in \left( H\left( \Omega _{L}\right) \right) ^{3},\quad f_{L}\in
L^{2}\left( 0,T;V^{\ast }\left( \Omega _{L}\right) \right)
\end{equation*}%
problem (\ref{3.3}) - (\ref{3.5}) has weak solution $u_{L}\left( t,x\right) $
that belongs to $\mathcal{V}\left( Q_{L}^{T}\right) $.
\end{theorem}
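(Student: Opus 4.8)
The plan is to construct the solution by the Faedo--Galerkin method, assembling the ingredients already established in Subsections \ref{SS_I.4.1}--\ref{SS_I.4.5}. First I would fix a countable, linearly independent, total system $\{w_i\}_{i=1}^{\infty}\subset V(\Omega_L)$ (possible since $V(\Omega_L)$ is separable) and seek approximate solutions $u_{Lm}=\sum_{i=1}^{m}u_{Lm}^{i}(t)\,w_i$ solving the projected system \eqref{5.5} with $u_{Lm}(0)=u_{0Lm}$, where $u_{0Lm}\to u_{0L}$ in $H(\Omega_L)$. Because the Gram matrix $\left(\langle w_i,w_j\rangle_{\Omega_L}\right)_{i,j=1}^{m}$ is nonsingular, \eqref{5.5} is equivalent to the Cauchy problem \eqref{5.6} for a finite system of nonlinear ordinary differential equations with quadratic right-hand side, which by the Peano (or Carath\'eodory) theorem admits a solution on a maximal interval $[0,t_m)$.

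Next I would upgrade this to a global solution on $[0,T]$. Taking $v=u_{Lm}$ in the Galerkin equation and using the cancellation \eqref{3.9} of the trilinear term together with the coercivity estimate \eqref{3.8} of the elliptic part yields the energy inequality \eqref{3.10}, hence the a priori bounds \eqref{3.11} and \eqref{3.12}; these keep $\{u_{Lm}\}$ in a fixed bounded subset of $V(Q_L^{T})=L^{2}(0,T;V(\Omega_L))\cap L^{\infty}(0,T;H(\Omega_L))$, so no finite-time blow-up occurs and $t_m=T$. Proposition \ref{P_3.1}, in the form of \eqref{3.15}, then bounds $\{B(u_{Lm})\}$ in $L^{4/3}(0,T;V^{\ast}(\Omega_L))$, and feeding this back into \eqref{5.5} bounds $\{u_{Lm}'\}$ in $L^{4/3}(0,T;V^{\ast}(\Omega_L))$ as in Proposition \ref{P_4.1}; by Corollary \ref{C_4.1} the family $\{u_{Lm}\}$ thus lies in a bounded subset of $\mathcal{V}(Q_L^{T})$.

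Now I would extract a subsequence, still denoted $u_{Lm}$, with $u_{Lm}\rightharpoonup u_L$ in $L^{2}(0,T;V(\Omega_L))$, $u_{Lm}\to u_L$ $\ast$-weakly in $L^{\infty}(0,T;H(\Omega_L))$, $u_{Lm}'\rightharpoonup u_L'$ in $L^{4/3}(0,T;V^{\ast}(\Omega_L))$, and, by the Aubin--Lions--Simon compactness Lemma \ref{L_1.1} with $B_0=V(\Omega_L)$, $B=H(\Omega_L)$, $B_1=V^{\ast}(\Omega_L)$, also $u_{Lm}\to u_L$ strongly in $L^{2}(0,T;H(\Omega_L))$. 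Passing to the limit in the linear terms of \eqref{5.5} tested against a fixed $w_j$ is immediate from weak convergence; the only delicate point is the trilinear term, which is precisely Proposition \ref{P_3.2}: the strong $L^{2}(0,T;H)$ convergence together with the two-dimensionality of $\Omega_L$ (Ladyzhenskaya/Gagliardo--Nirenberg) lets one pass to the limit in the form \eqref{3.17a} against test functions in $\mathcal{C}^{1}(\overline{Q}_L)$, identifies the weak limit $\chi$ of $\{B(u_{Lm})\}$ in $L^{4/3}(0,T;V^{\ast}(\Omega_L))$ with $B(u_L)$, and, by density of $\mathcal{C}^{1}(\overline{Q}_L)$ in $\mathcal{V}(Q_L^{T})$, promotes this identification to $\chi=B(u_L)$ in $L^{4/3}(0,T;V^{\ast}(\Omega_L))$. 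Hence $u_L$ satisfies \eqref{3.6} for every $v\in V(\Omega_L)$ and a.e. $t\in(0,T)$.

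Finally I would recover the initial datum exactly as in Subsection \ref{SS_I.4.5}: multiplying the Galerkin equation by $\phi\in C^{1}[0,T]$ with $\phi(T)=0$, integrating by parts in $t$, passing to the limit, and comparing with the same manipulation applied to the limit equation \eqref{3.6}, one obtains $\phi(0)\langle u_L(0)-u_{0L},v\rangle_{\Omega_L}=0$ for all such $\phi$ and all $v\in V(\Omega_L)$, whence $u_L(0)=u_{0L}$ in $H(\Omega_L)$, where one also uses that $u_L\in\mathcal{V}(Q_L^{T})$ is a.e. equal to a continuous $H(\Omega_L)$-valued function (the lemma of \cite{Tem1} recalled above). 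Assembling these steps produces $u_L\in\mathcal{V}(Q_L^{T})$ solving \eqref{3.3}--\eqref{3.5}. I expect the genuinely nonlinear step, the passage to the limit in the trilinear form, to be the main obstacle, but it is already resolved by the weak compactness of $B$ on the planar cross-section established in Proposition \ref{P_3.2}; the remaining work is the routine bookkeeping of the Hopf--Leray construction.
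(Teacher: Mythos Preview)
Your proposal is correct and follows essentially the same route as the paper: the proof of Theorem~\ref{Th_2.1} in the paper is precisely the assembly of Subsections~\ref{SS_I.4.1}--\ref{SS_I.4.5}, i.e.\ Faedo--Galerkin approximation, the energy estimates \eqref{3.10}--\eqref{3.12}, the bound \eqref{3.15} on $B$, Proposition~\ref{P_4.1} for $u_L'$, the weak compactness Proposition~\ref{P_3.2} to identify $\chi=B(u_L)$, and the $\phi$-trick for the initial datum. The only cosmetic difference is that you invoke Lemma~\ref{L_1.1} (Aubin--Lions) explicitly for the strong $L^2(0,T;H)$ compactness, whereas the paper cites the same compactness inside the proof of Proposition~\ref{P_3.2}.
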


\begin{remark}
From the obtained a priori estimates and Propositions \ref{P_3.1} and \ref%
{P_3.2} follows of the fulfilment of all conditions of the general theorem
of the compactness method (see, e. g. \cite{Sol3}, \cite{SolAhm}, and for
complementary informations see, \cite{Sol2}, \cite{Sol4}). Consequently, one
could be to study the solvability of problem (\ref{3.3}) - (\ref{3.5}) with
use of this general theorem.
\end{remark}

\section{\label{Sec_I.5}Uniqueness of Solution of Problem (3.3) - (3.5)}

For the study of the uniqueness of the solution as usually: we will assume
that posed problem have, at least, two different solutions $u=\left(
u_{1},u_{2},u_{3}\right) $, $v=\left( v_{1},v_{2},v_{3}\right) $. Below will
show that this isn't possible, and for which one need to investigate their
difference, i.e. $w=u-v$.\ (Here for brevity we won't specify indexes for
functions, which showing that here is investigated the system of equations (%
\ref{3.3}) - (\ref{3.5}) on $Q_{L}^{T}$.)

So, we obtain the following problem for $w=u-v$ 
\begin{equation*}
\frac{\partial w}{\partial t}-\nu \left[ \left( 1+a_{1}^{-2}\right)
D_{1}^{2}+\left( 1+a_{2}^{-2}\right) D_{2}^{2}\right] w-2\nu
a_{1}^{-1}a_{2}^{-1}D_{1}D_{2}w+
\end{equation*}%
\begin{equation*}
\left( u_{1}-a_{1}^{-1}u_{3}\right) D_{1}u-\left(
v_{1}-a_{1}^{-1}v_{3}\right) D_{1}v+\left( u_{2}-a_{2}^{-1}u_{3}\right)
D_{2}u-
\end{equation*}%
\begin{equation*}
\left( v_{2}-a_{2}^{-1}v_{3}\right) D_{2}v=0,
\end{equation*}%
\begin{equation*}
\func{div}w=D_{1}\left[ \left( u-a_{1}^{-1}u_{3}\right) -\left(
v-a_{1}^{-1}v_{3}\right) \right] +D_{2}\left[ \left(
u-a_{2}^{-1}u_{3}\right) \right. -
\end{equation*}%
\begin{equation}
\left. \left( v-a_{2}^{-1}v_{3}\right) \right] =D_{1}w+D_{2}w-\left( a_{1}^{-1}D_{1}+a_{2}^{-1}D_{2}\right) w_{3}=0,
\label{3.18}
\end{equation}%
\begin{equation}
w\left( 0,x\right) =0,\quad x\in \Omega \cap L;\quad w\left\vert \ _{\left( 0,T\right) \times \partial \Omega _{L}}\right. =0.
\label{3.19}
\end{equation}

Hence we derive 
\begin{equation*}
\frac{1}{2}\frac{d}{dt}\left\Vert w\right\Vert _{2}^{2}+\nu \left[ \left(
1+a_{1}^{-2}\right) \left\Vert D_{1}w\right\Vert _{2}^{2}+\left(
1+a_{2}^{-2}\right) \left\Vert D_{2}w\right\Vert _{2}^{2}\right] +
\end{equation*}%
\begin{equation*}
2\nu a_{1}^{-1}a_{2}^{-1}\left\langle D_{1}w,D_{2}w\right\rangle _{\Omega
_{L}}+\left\langle \left( u_{1}-a_{1}^{-1}u_{3}\right) D_{1}u-\left(
v_{1}-a_{1}^{-1}v_{3}\right) D_{1}v,w\right\rangle _{\Omega _{L}}+
\end{equation*}%
\begin{equation*}
\left\langle \left( u_{2}-a_{2}^{-1}u_{3}\right) D_{2}u-\left(
v_{2}-a_{2}^{-1}v_{3}\right) D_{2}v,w\right\rangle _{\Omega _{L}}=0
\end{equation*}%
or 
\begin{equation*}
\frac{1}{2}\frac{d}{dt}\left\Vert w\right\Vert _{2}^{2}+\nu \left(
\left\Vert D_{1}w\right\Vert _{2}^{2}+\left\Vert D_{2}w\right\Vert
_{2}^{2}\right) +\nu \left[ a_{1}^{-2}\left\Vert D_{1}w\right\Vert
_{2}^{2}+a_{2}^{-2}\left\Vert D_{2}w\right\Vert _{2}^{2}+\right.
\end{equation*}%
\begin{equation*}
\left. 2a_{1}^{-1}a_{2}^{-1}\left\langle D_{1}w,D_{2}w\right\rangle _{\Omega
_{L}}\right] +\left\langle u_{1}D_{1}u-v_{1}D_{1}v,w\right\rangle _{\Omega
_{L}}+\left\langle u_{2}D_{2}u-v_{2}D_{2}v,w\right\rangle _{\Omega _{L}}-
\end{equation*}%
\begin{equation}
a_{1}^{-1}\left\langle u_{3}D_{1}u-v_{3}D_{1}v,w\right\rangle _{\Omega
_{L}}-a_{2}^{-1}\left\langle u_{3}D_{2}u-v_{3}D_{2}v,w\right\rangle _{\Omega
_{L}}=0.  \label{3.20}
\end{equation}

If the last 4 terms in the sum of left part (\ref{3.20}) consider separately
and if these simplify by calculations then we get 
\begin{equation*}
\left\langle w_{1}D_{1}u,w\right\rangle _{\Omega _{L}}+\left\langle
v_{1}D_{1}w,w\right\rangle _{\Omega _{L}}+\left\langle
w_{2}D_{2}u,w\right\rangle _{\Omega _{L}}+\left\langle
v_{2}D_{2}w,w\right\rangle _{\Omega _{L}}-
\end{equation*}%
\begin{equation*}
a_{1}^{-1}\left\langle w_{3}D_{1}u,w\right\rangle _{\Omega
_{L}}-a_{1}^{-1}\left\langle v_{3}D_{1}w,w\right\rangle _{\Omega
_{L}}-a_{2}^{-1}\left\langle w_{3}D_{2}u,w\right\rangle _{\Omega
_{L}}-a_{2}^{-1}\left\langle v_{3}D_{2}w,w\right\rangle _{\Omega _{L}}=
\end{equation*}%
\begin{equation*}
\left\langle w_{1}D_{1}u,w\right\rangle _{\Omega _{L}}+\frac{1}{2}%
\left\langle v_{1},D_{1}w^{2}\right\rangle _{\Omega _{L}}+\left\langle
w_{2}D_{2}u,w\right\rangle _{\Omega _{L}}+\frac{1}{2}\left\langle
v_{2},D_{2}w^{2}\right\rangle _{\Omega _{L}}-
\end{equation*}%
\begin{equation*}
a_{1}^{-1}\left\langle w_{3}D_{1}u,w\right\rangle _{\Omega _{L}}-\frac{1}{2}%
a_{1}^{-1}\left\langle v_{3},D_{1}w^{2}\right\rangle _{\Omega
_{L}}-a_{2}^{-1}\left\langle w_{3}D_{2}u,w\right\rangle _{\Omega _{L}}-
\end{equation*}%
\begin{equation*}
\frac{1}{2}a_{2}^{-1}\left\langle v_{3},D_{2}w^{2}\right\rangle =\frac{1}{2}%
\left\langle v_{1}-a_{1}^{-1}v_{3},D_{1}w^{2}\right\rangle _{\Omega _{L}}+%
\frac{1}{2}\left\langle v_{2}-a_{2}^{-1}v_{3},D_{2}w^{2}\right\rangle
_{\Omega _{L}}+
\end{equation*}%
\begin{equation*}
\left\langle \left( w_{1}-a_{1}^{-1}w_{3}\right) w,D_{1}u\right\rangle
_{\Omega _{L}}+\left\langle \left( w_{2}-a_{2}^{-1}w_{3}\right)
w,D_{2}u\right\rangle _{\Omega _{L}}=
\end{equation*}%
\begin{equation*}
\left\langle \left( w_{1}-a_{1}^{-1}w_{3}\right) w,D_{1}u\right\rangle
_{\Omega _{L}}+\left\langle \left( w_{2}-a_{2}^{-1}w_{3}\right)
w,D_{2}u\right\rangle _{\Omega _{L}}.
\end{equation*}%
In the last equality were used the equation $\func{div}v=0$ (see, (\ref{3.4}%
)) and the condition (\ref{3.19}).

If takes into account this equality in equation (\ref{3.20}) then we get 
\begin{equation*}
\frac{1}{2}\frac{d}{dt}\left\Vert w\right\Vert _{2}^{2}+\nu \left(
\left\Vert D_{1}w\right\Vert _{2}^{2}+\left\Vert D_{2}w\right\Vert
_{2}^{2}\right) +\nu \left[ a_{1}^{-2}\left\Vert D_{1}w\right\Vert
_{2}^{2}+\right.
\end{equation*}%
\begin{equation*}
\left. a_{2}^{-2}\left\Vert D_{2}w\right\Vert
_{2}^{2}+2a_{1}^{-1}a_{2}^{-1}\left\langle D_{1}w,D_{2}w\right\rangle
_{\Omega _{L}}\right] +\left\langle \left( w_{1}-a_{1}^{-1}w_{3}\right)
w,D_{1}u\right\rangle _{\Omega _{L}}+
\end{equation*}%
\begin{equation}
\left\langle \left( w_{2}-a_{2}^{-1}w_{3}\right) w,D_{2}u\right\rangle
_{\Omega _{L}}=0,\quad \left( t,x\right) \in \left( 0,T\right) \times \Omega
_{L}.  \label{3.21}
\end{equation}

Thus we obtain the Cauchy problem for equation (\ref{3.21}) with the initial
condition 
\begin{equation}
\left\Vert w\right\Vert _{2}\left( 0\right) =0.  \label{3.22}
\end{equation}

We get the following Cauchy problem for the differential inequation using
the appropriate estimates 
\begin{equation*}
\frac{1}{2}\frac{d}{dt}\left\Vert w\right\Vert _{2}^{2}+\nu \left(
\left\Vert D_{1}w\right\Vert _{2}^{2}+\left\Vert D_{2}w\right\Vert
_{2}^{2}\right) \leq
\end{equation*}%
\begin{equation}
\left\vert \left\langle \left( w_{1}-a_{1}^{-1}w_{3}\right)
w,D_{1}u\right\rangle _{\Omega _{L}}\right\vert +\left\vert \left\langle
\left( w_{2}-a_{2}^{-1}w_{3}\right) w,D_{2}u\right\rangle _{\Omega
_{L}}\right\vert ,  \label{3.21'}
\end{equation}

with the initial condition (\ref{3.22}).

Then for the right side of (\ref{3.21'}) we get the following estimate 
\begin{equation*}
\left\vert \left\langle \left( w_{1}-a_{1}^{-1}w_{3}\right)
w,D_{1}u\right\rangle _{\Omega _{L}}\right\vert +\left\vert \left\langle
\left( w_{2}-a_{2}^{-1}w_{3}\right) w,D_{2}u\right\rangle _{\Omega
_{L}}\right\vert \leq
\end{equation*}%
\begin{equation*}
\left( \left\Vert w_{1}-a_{1}^{-1}w_{3}\right\Vert _{4}+\left\Vert
w_{2}-a_{2}^{-1}w_{3}\right\Vert _{4}\right) \left\Vert w\right\Vert
_{4}\left\Vert \nabla u\right\Vert _{2}\leq
\end{equation*}%
whence we derive 
\begin{equation*}
\left( 1+\max \left\{ \left\vert a_{1}^{-1}\right\vert ,\left\vert
a_{2}^{-1}\right\vert \right\} \right) \left\Vert w\right\Vert
_{4}^{2}\left\Vert \nabla u\right\Vert _{2}\leq c\left\Vert w\right\Vert
_{2}\left\Vert \nabla w\right\Vert _{2}\left\Vert \nabla u\right\Vert _{2}
\end{equation*}%
thanks of Gagliardo-Nirenberg inequality (\cite{BesIlNik}).

It need to note that 
\begin{equation*}
\left( w_{1}-a_{1}^{-1}w_{3}\right) w,\ \left( w_{2}-a_{2}^{-1}w_{3}\right)
w\in L^{2}\left( 0,T;V^{\ast }\left( \Omega _{L}\right) \right) ,
\end{equation*}%
by virtue of (\ref{3.16}).

Now taking into account this in (\ref{3.21'}) one can arrive the following
Cauchy problem for differential inequation 
\begin{equation*}
\frac{1}{2}\frac{d}{dt}\left\Vert w\right\Vert _{2}^{2}\left( t\right) +\nu
\left\Vert \nabla w\right\Vert _{2}^{2}\left( t\right) \leq c\left\Vert
w\right\Vert _{2}\left( t\right) \left\Vert \nabla w\right\Vert _{2}\left(
t\right) \left\Vert \nabla u\right\Vert _{2}\left( t\right) \leq
\end{equation*}%
\begin{equation*}
C\left( c,\nu \right) \left\Vert \nabla u\right\Vert _{2}^{2}\left( t\right)
\left\Vert w\right\Vert _{2}^{2}\left( t\right) +\nu \left\Vert \nabla
w\right\Vert _{2}^{2}\left( t\right) ,\quad \left\Vert w\right\Vert
_{2}\left( 0\right) =0,
\end{equation*}%
since $w\in L^{\infty }\left( 0,T;H\left( \Omega _{L}\right) \right) $.
Consequently, $\left\Vert w\right\Vert _{2}\left\Vert \nabla w\right\Vert
_{2}\in L^{2}\left( 0,T\right) $ by virtue of the proved above existence
theorem $w\in \mathcal{V}\left( Q_{L}^{T}\right) $, where $C\left( c,\nu
\right) >0$ is constant.

Thus we obtain the problem 
\begin{equation*}
\frac{d}{dt}\left\Vert w\right\Vert _{2}^{2}\left( t\right) \leq 2C\left(
c,\nu \right) \left\Vert \nabla u\right\Vert _{2}^{2}\left( t\right)
\left\Vert w\right\Vert _{2}^{2}\left( t\right) ,\quad \left\Vert
w\right\Vert _{2}\left( 0\right) =0.
\end{equation*}%
If to denote $\left\Vert w\right\Vert _{2}^{2}\left( t\right) \equiv y\left(
t\right) $ then 
\begin{equation*}
\frac{d}{dt}y\left( t\right) \leq 2C\left( c,\nu \right) \left\Vert \nabla
u\right\Vert _{2}^{2}\left( t\right) y\left( t\right) ,\quad y\left(
0\right) =0.
\end{equation*}

Whence follows $\left\Vert w\right\Vert _{2}^{2}\left( t\right) \equiv
y\left( t\right) =0$, and consequently the following result is proven: \ 

\begin{theorem}
\label{Th_2.2}Under above mentioned conditions for any 
\begin{equation*}
\left( f,u_{0}\right) \in L^{2}\left( 0,T;V^{\ast }\left( \Omega _{L}\right)
\right) \times H\left( \Omega _{L}\right)
\end{equation*}%
problem (\ref{3.3}) - (\ref{3.5}) has a unique weak solution $u\left(
t,x\right) $ that is contained in $\mathcal{V}\left( Q_{L}^{T}\right) $.
\end{theorem}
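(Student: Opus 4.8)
The plan is to read Theorem \ref{Th_2.2} as two separate assertions and to supply each from its own source: existence is already available, while uniqueness is the genuinely new content to be proved here. For the existence half I would simply invoke Theorem \ref{Th_2.1}. Under the stated hypotheses $u_{0L}\in (H(\Omega_L))^3$ and $f_L\in L^2(0,T;V^*(\Omega_L))$, the Faedo--Galerkin construction carried out through Section \ref{Sec_I.4} (the a priori estimates \eqref{3.11}--\eqref{3.12}, the trilinear bound of Proposition \ref{P_3.1}, the bound on $u_L'$ of Proposition \ref{P_4.1}, the weak compactness of $B$ from Proposition \ref{P_3.2}, and the realisation of the initial datum) already produces a weak solution $u_L\in\mathcal{V}(Q_L^T)$ of \eqref{3.3}--\eqref{3.5}. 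Hence nothing further is needed for existence, and the whole remaining task is to show that this solution is the only one.

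For uniqueness I would argue in the standard energy fashion adapted to the projected system. Assuming two solutions $u,v\in\mathcal{V}(Q_L^T)$, set $w=u-v$; subtracting the two copies of \eqref{3.6} and testing against $w$ produces the energy identity \eqref{3.20}. The first key step is to show that the viscous block is coercive: reorganising the diagonal terms together with the cross term $2a_1^{-1}a_2^{-1}\langle D_1w,D_2w\rangle_{\Omega_L}$ into the perfect square $\nu\,\|a_1^{-1}D_1w+a_2^{-1}D_2w\|_2^2\ge0$ (the same completion of squares used in Proposition \ref{Pr_4.1}) leaves the coercive lower bound $\nu(\|D_1w\|_2^2+\|D_2w\|_2^2)$ on the left. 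The second step is to dispose of the convective contribution: after the bilinear split $u_jD_ju-v_jD_jv=w_jD_ju+v_jD_jw$ and integration by parts, the terms in which the derivative falls on $w$ collapse via the divergence-free relation \eqref{3.4} and the boundary condition \eqref{3.19}, leaving only $\langle(w_1-a_1^{-1}w_3)w,D_1u\rangle_{\Omega_L}+\langle(w_2-a_2^{-1}w_3)w,D_2u\rangle_{\Omega_L}$, exactly as in the passage preceding \eqref{3.21}.

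The decisive estimate then closes the argument. Since $\Omega_L\subset R^2$ is two-dimensional, the Ladyzhenskaya/Gagliardo--Nirenberg inequality $\|w\|_4^2\le c\|w\|_2\|\nabla w\|_2$ is available; bounding the surviving convective terms by $c\|w\|_4^2\|\nabla u\|_2\le c\|w\|_2\|\nabla w\|_2\|\nabla u\|_2$ and applying Young's inequality lets me absorb $\nu\|\nabla w\|_2^2$ into the coercive left-hand side. What remains is the differential inequality
\begin{equation*}
\frac{d}{dt}\|w\|_2^2(t)\le C(c,\nu)\,\|\nabla u\|_2^2(t)\,\|w\|_2^2(t),\qquad \|w\|_2(0)=0.
\end{equation*}
Because $u\in L^2(0,T;V(\Omega_L))$, the coefficient $\|\nabla u\|_2^2$ lies in $L^1(0,T)$, so Gronwall's lemma forces $\|w\|_2^2\equiv0$, that is $u=v$. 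I expect the main obstacle to be precisely this final estimate: it is only the two-dimensionality of the cross-section $\Omega_L$ that makes the Ladyzhenskaya inequality strong enough to absorb the gradient factor and render $\|\nabla u\|_2^2$ merely time-integrable, so the reduction from $\Omega\subset R^3$ to the planar slices via Lemma \ref{L_2.2} is exactly what makes the closure possible; along the way one must also check that the limited regularity $u,v\in\mathcal{V}(Q_L^T)$ still legitimises using $w$ as a test function and performing the integrations by parts, which follows from Proposition \ref{P_3.1} and the weak continuity of $u_L$ into $H(\Omega_L)$.
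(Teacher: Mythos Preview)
Your proposal is correct and follows essentially the same approach as the paper: existence is quoted from Theorem \ref{Th_2.1}, and uniqueness is obtained by testing the difference equation against $w=u-v$, completing the square in the viscous block, reducing the convective terms via \eqref{3.4} and \eqref{3.19} to the form in \eqref{3.21}, invoking the two-dimensional Gagliardo--Nirenberg inequality $\|w\|_4^2\le c\|w\|_2\|\nabla w\|_2$, absorbing $\nu\|\nabla w\|_2^2$ by Young's inequality, and closing with Gronwall since $\|\nabla u\|_2^2\in L^1(0,T)$. Your identification of the two-dimensionality of $\Omega_L$ as the crucial feature enabling the closure is exactly the point of the reduction.
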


\section{\label{Sec_I.6}Proof of Theorem \protect\ref{Th_1}}

\begin{proof}
(of Theorem \ref{Th_1}). As were noted in introduction, under the above
mentioned conditions problem (1.1%
${{}^1}$%
) - (\ref{3}) is weakly solvable and any solution belongs to the space $%
\mathcal{V}\left( Q^{T}\right) $. Consequently, under the conditions of
Theorem \ref{Th_1} this problem also has weak solution that belongs, at
least, to the space $\mathcal{V}\left( Q^{T}\right) $. But as shown in
Sections \ref{Sec_I.4} under conditions of Theorem \ref{Th_1} the auxiliary
problems of problem (1.1%
${{}^1}$%
) - (\ref{3}) are weakly solvable and any solution belongs to the space $%
\mathcal{V}\left( Q_{L}^{T}\right) $. Moreover, as shown in Section \ref%
{Sec_I.5} weak solution of each of these problems is unique. Hence follows,
that we can employ of Lemma \ref{L_2.2} to solutions of problem (1.1%
${{}^1}$%
) - (\ref{3}) on $Q_{L}^{T}$ due of the smoothness of solutions of this
problem.

So, assume problem (1.1%
${{}^1}$%
) - (\ref{3}) has, at least, two different weak solutions under conditions
of Theorem \ref{Th_1}. It is clear that if the problem have more than one
solution then there is, at least, some subdomain of $Q^{T}\equiv \left(
0,T\right) \times \Omega $, on which this problem have, at least, two
solutions that different.Consequently, starting from the above Lemma \ref%
{L_2.2} is sufficiently to investigate the existence and uniqueness of the
posed problem on arbitrary fixed subdomain in order to shows that exist or
unexist such subdomens, on which the studied problem can possess more than
one solutions. More exactly it is sufficiently to study of this question in
the case when subdomains are generated by arbitrary fixed hyperplanes by
virtue of Lemma \ref{L_2.2}. For this aim it is enough to prove, that isn't
exist such subdomains, on which the problem (1.1%
${{}^1}$%
) - (\ref{3}) could has of more than one solution by virtue of Lemma \ref%
{L_2.2}. Thus, in order to end of the proof is remains to use the above
results (i.e. Theorems \ref{Th_2.1} and \ref{Th_2.2}).

Indeed, as follows from theorems that were proved in the previous sections
there not are exist subdomains, on which the problem (1.1%
${{}^1}$%
) - (\ref{3}) could be possesses more than one weak solution.

Consequently, according of Lemma \ref{L_2.2} we obtain, that the problem (1.1%
${{}^1}$%
) - (\ref{3}) under conditions of Theorem \ref{Th_1} possesses only one weak
solution.
\end{proof}

Whence can make the following conclusion.

\subsection{Conclusion}

Let's 
\begin{equation*}
f\in L^{2}\left( 0,T;V^{\ast }\left( \Omega \right) \right) ,\ u_{0}\in
H\left( \Omega \right) .
\end{equation*}%
It well-known that following inclusions are dense 
\begin{equation*}
L^{2}\left( 0,T;H^{1/2}\left( \Omega \right) \right) \subset L^{2}\left(
Q^{T}\right) ;\ H^{1/2}\left( \Omega \right) \subset H\left( \Omega \right)
\ \ \&
\end{equation*}%
\begin{equation*}
L^{2}\left( 0,T;H^{1/2}\left( \Omega \right) \right) \subset L^{2}\left(
0,T;H^{-1}\left( \Omega \right) \right) .
\end{equation*}%
Hence, there exist such sequences 
\begin{equation*}
\left\{ u_{0m}\right\} _{m=1}^{\infty }\subset H^{1/2}\left( \Omega \right)
;\left\{ f_{m}\right\} _{m=1}^{\infty }\subset L^{2}\left( 0,T;H^{1/2}\left(
\Omega \right) \right)
\end{equation*}%
that $u_{0m}\longrightarrow u_{0}$ in $H\left( \Omega \right) $ , $%
f_{m}\longrightarrow f$ in $L^{2}\left( 0,T;H^{-1}\left( \Omega \right)
\right) $.

Thus, we establish following result.

\begin{theorem}
\label{Th_8}Let $\Omega $ be a Lipschitz open bounded domain in $R^{3}$ and
the given functions $f$ and $u_{0}$\ satisfy of conditions $f$ $\in
L^{2}\left( 0,T;H^{1/2}\left( \Omega \right) \right) $ and $u_{0}\in
H^{1/2}\left( \Omega \right) $, respectively. Then there exists unique
function $u\in \mathcal{V}\left( Q^{T}\right) $ that is the weak solution of
the considered problem, in the sense of Definition \ref{D_2.2}.
\end{theorem}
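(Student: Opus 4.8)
The plan is to deduce Theorem~\ref{Th_8} by assembling the existence result quoted above (Temam's theorem) with the uniqueness already proved in Theorem~\ref{Th_1}. First I would observe that the hypotheses here are stronger than those of the existence theorem: for a bounded Lipschitz domain $\Omega\subset R^{3}$ one has the continuous inclusions $H^{1/2}(\Omega)\subset H(\Omega)$ and $L^{2}(0,T;H^{1/2}(\Omega))\subset L^{2}(0,T;V^{\ast}(\Omega))$, so $u_{0}\in H^{1/2}(\Omega)$ forces $u_{0}\in H(\Omega)$ and $f\in L^{2}(0,T;H^{1/2}(\Omega))$ forces $f\in L^{2}(0,T;V^{\ast}(\Omega))$. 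The existence theorem together with the appended remarks for $n=3$ then yields at least one $u\in\mathcal{V}(Q^{T})=V(Q^{T})\cap W^{1,4/3}(0,T;V^{\ast}(\Omega))$ which is weakly continuous from $[0,T]$ into $H(\Omega)$, satisfies $u(0)=u_{0}$, and obeys the weak identity (\ref{2.3a}) for every $v\in V(\Omega)$; that is, $u$ is a weak solution in the sense of Definition~\ref{D_2.2}. This disposes of existence, and also shows that every weak solution of the problem with the present data automatically lies in $\mathcal{V}(Q^{T})$.

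For uniqueness I would invoke Theorem~\ref{Th_1}, after noting that a bounded Lipschitz $\Omega$ is of class $Lip_{loc}$ in the sense of Definition~\ref{D_3.1}, so that its hypotheses $u_{0}\in H^{1/2}(\Omega)$, $f\in L^{2}(0,T;H^{1/2}(\Omega))$ coincide with those of Theorem~\ref{Th_8}. Concretely, the argument of Section~\ref{Sec_I.6} runs by contradiction: if there were two distinct weak solutions $u,v\in\mathcal{V}(Q^{T})$, then by the de~Rham reduction (Propositions~\ref{Pr_2.1},~\ref{Pr_2.2}, Theorem~\ref{Th_1.2}) they also give two distinct velocity fields for the pressure-free problem (1.1$^{1}$)--(\ref{3}), and hence by Lemma~\ref{L_2.2} there is a one-parameter family of parallel two-dimensional hyperplanes $L_{\alpha}$, $\alpha\in I$ with $mes_{1}(I)>0$ and $mes_{2}(\Omega\cap L_{\alpha})>0$, such that the traces of $u$ and $v$ on each $Q_{L_{\alpha}}^{T}=(0,T)\times(\Omega\cap L_{\alpha})$ differ on a set of positive measure. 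But the restricted functions solve the auxiliary slice problem (\ref{3.3})--(\ref{3.5}), which by Theorem~\ref{Th_2.1} is solvable and by Theorem~\ref{Th_2.2} has a \emph{unique} weak solution in $\mathcal{V}(Q_{L}^{T})$ --- a contradiction. Hence $u\equiv v$.

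The step I expect to bear the real weight --- and which must be carried out with care --- is the reduction to the slices: one has to verify that the trace on a generic two-dimensional hyperplane $L$ of a global solution $u\in\mathcal{V}(Q^{T})$ is well defined, lies in $\mathcal{V}(Q_{L}^{T})$, and genuinely satisfies the closed system (\ref{3.3})--(\ref{3.5}) with data $u_{0L}\in H(\Omega_{L})$, $f_{L}\in L^{2}(0,T;V^{\ast}(\Omega_{L}))$ obtained from the identities (\ref{3.1})--(\ref{3.2}). This is precisely the place where the extra hypothesis $u_{0},f\in H^{1/2}$ is used, since $H^{1/2}$-regularity is what makes restriction to smooth $2$-surfaces meaningful (cf.\ the discussion after Definition~\ref{D_2.1}), and it is also where the local Lipschitz structure of $\partial\Omega$ (Definition~\ref{D_3.1}, ensuring $\Omega_{L}$ is Lipschitz) enters. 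Once this compatibility is established, the remaining ingredients are routine and already in the paper: the a~priori estimates (\ref{3.11})--(\ref{3.12}), the boundedness and weak sequential compactness of the trilinear term (Propositions~\ref{P_3.1},~\ref{P_3.2}), the Faedo--Galerkin construction with the bound on $u_{L}'$ (Proposition~\ref{P_4.1}, Corollary~\ref{C_4.1}), and finally the Gronwall-type estimate of Section~\ref{Sec_I.5} which forces $\|w\|_{2}^{2}(t)\equiv0$ for the difference $w$ of two slice solutions. I would close, as in the Conclusion subsection, with the remark that $H^{1/2}(\Omega)$ and $L^{2}(0,T;H^{1/2}(\Omega))$ are dense in $H(\Omega)$ and in $L^{2}(0,T;H^{-1}(\Omega))$, so Theorem~\ref{Th_8} is not vacuous and covers data dense in the classes for which existence is classical.
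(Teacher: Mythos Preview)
Your proposal is correct and follows essentially the same route as the paper: Theorem~\ref{Th_8} is obtained by combining Temam's existence theorem (via the inclusions $H^{1/2}(\Omega)\subset H(\Omega)$ and $L^{2}(0,T;H^{1/2}(\Omega))\subset L^{2}(0,T;V^{\ast}(\Omega))$) with the uniqueness statement of Theorem~\ref{Th_1}, and the density remarks you close with are exactly those the paper records in the Conclusion subsection. Your write-up is in fact more explicit than the paper's, which simply announces ``Thus, we establish following result'' after noting the density inclusions and leaves the reader to assemble the pieces.
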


Roughly speaking, since $L^{2}\left( 0,T;H^{1/2}\left( \Omega \right)
\right) $ and $H^{1/2}\left( \Omega \right) $ are everywhere dense in spaces 
$L^{2}\left( 0,T;H^{1/2}\left( \Omega \right) \right) $ and $H^{1/2}\left(
\Omega \right) $, respectively, then if functions $f$ and $u_{0}$ are any
given functions from $L^{2}\left( 0,T;V^{\ast }\left( \Omega \right) \right) 
$ and $H\left( \Omega \right) $, respectively then in their any neighbohoods
there are functions $\widetilde{f}$ and $\widetilde{u}_{0}$ from $%
L^{2}\left( 0,T;H^{1/2}\left( \Omega \right) \right) $ and $H^{1/2}\left(
\Omega \right) $, respectively that the problem (1.1%
${{}^1}$%
) - (\ref{3}) has unique weak solution $u$, that belongs to a bounded subset
of $\mathcal{V}\left( Q^{T}\right) $, where a weak solution be understood in
the sense of Definition \ref{D_2.2}.

So, under conditions of Theorem \ref{Th_1} the uniqueness of weak solution $%
u(x,t)$ (of velocity vector) of the problem (1.1%
${{}^1}$%
) - (\ref{3}) obtained from the mixed problem for the incompressible
Navier-Stokes $3D$-equation proved (explanations of the last proposition see
Notation \ref{N_1} and next paragraph of this Notation \ref{N_1}).

\part{\label{Part II}Employment of modified approach to study of uniqueness}

\section{\label{Sec_II.7}One conditional uniqueness theorem for problem (1.1$%
^{1}$) - (1.3)}

We believed there have the sense to provide here yet one result connected
with same question for problem (\ref{1}) - (\ref{3}), but with conditions
onto the given functions under which the existence theorem of the weak
solution of this problem is proven. Here the known approach for the
investigation of the uniqueness of solution of problem (1.1%
${{}^1}$%
) - (\ref{3}) is applied, but with use also other properties of this problem.

Let posed problem have two different solutions: $u,v\in \mathcal{V}\left(
Q^{T}\right) $, then within known approach we get the following problem for
vector function $w(t,x)=u(t,x)-v(t,x)$ 
\begin{equation}
\frac{1}{2}\frac{\partial }{\partial t}\left\Vert w\right\Vert _{2}^{2}+\nu \left\Vert \nabla w\right\Vert _{2}^{2}+\underset{j,k=1}{\overset{3}{\sum }}\left\langle \frac{\partial v_{k}}{\partial x_{j}}w_{k},w_{j}\right\rangle =0,
\label{2.9}
\end{equation}%
\begin{equation}
w\left( 0,x\right) =w_{0}\left( x\right) =0,\quad x\in \Omega ;\quad
w\left\vert \ _{\left[ 0,T\right] \times \partial \Omega }=0\right. ,
\label{2.10}
\end{equation}%
where $\Omega \subset 
\mathbb{R}
^{3}$ is above-mentioned domain.\ 

So, for the proof of triviality of solution of problem (\ref{2.9})-(\ref%
{2.10}), as usually will used method of contradiction. Consequently, one
will start with assume that problem have nontrivial solution.

In addition, it is need to noted here will used of the peculiarity of having
nonlinearity of this problem.

In the beginning we will study the following quadratic form (\cite{Gan}) for
examination of problem (\ref{2.9})-(\ref{2.10}) 
\begin{equation*}
B\left( w,w\right) =\underset{j,k=1}{\overset{3}{\sum }}\left( \frac{%
\partial v_{k}}{\partial x_{j}}w_{k}w_{j}\right) \left( t,x\right) ,
\end{equation*}%
denote it as 
\begin{equation*}
B\left( w,w\right) \equiv \underset{j,k=1}{\overset{3}{\sum }}\left(
a_{jk}w_{k}w_{j}\right) \left( t,x\right) .
\end{equation*}%
It is clear that behavior of the surface generated by function $B\left(
w,w\right) $ respect to the variables $w_{k},\ k=1,2,3$ depende of the
accelerations of the flow on the different directions.

Consider the question: it would possible to transform the quadratic form $%
B\left( w,w\right) $ to the canonical form, namely to the following form 
\begin{eqnarray*}
B\left( w,w\right) &\equiv &\underset{i=1}{\overset{3}{\sum }}\left(
b_{i}w_{i}^{2}\right) \left( t,x\right) ,\quad b_{i}\left( t,x\right) \equiv
b_{i}\left( \overline{D_{j}v_{k}}\right) , \\
\text{where \ }D_{i}v_{k} &\equiv &\frac{\partial v_{k}}{\partial x_{i}}%
,\quad i,k=1,2,3,\quad b_{i}:%
\mathbb{R}
^{9}\longrightarrow 
\mathbb{R}
\text{ be functions?}
\end{eqnarray*}

The matrix $\left\Vert a_{jk}\right\Vert $ of coefficients of the quadratic
form $B\left( w,w\right) $\ can be represented in the following form 
\begin{equation*}
\left\Vert a_{jk}\right\Vert _{j,k=1}^{3}=\left\Vert 
\begin{array}{ccc}
a_{11} & a_{12} & a_{13} \\ 
a_{21} & a_{22} & a_{23} \\ 
a_{31} & a_{32} & a_{33}%
\end{array}%
\right\Vert ,\quad \text{where }a_{jk}=a_{kj}=\frac{1}{2}\left(
D_{j}v_{k}+D_{k}v_{j}\right) ,
\end{equation*}%
hence it is symmetric matrix. As known in this case the above transformation
exists according of symmetricness of matrix $\left\Vert a_{jk}\right\Vert
_{j,k=1}^{3}$ (see, \cite{Gan}). Consequently, coefficients $b_{i}$ are
defined and have the following presentations 
\begin{equation*}
b_{1}=D_{1}v_{1};\ b_{2}=D_{2}v_{2}-\frac{\left(
D_{1}v_{2}+D_{2}v_{1}\right) ^{2}}{4b_{1}};\ b_{3}=\frac{\det \left\Vert
D_{i}v_{k}\right\Vert _{i,k=1}^{3}}{\det \left\Vert D_{i}v_{k}\right\Vert
_{i,k=1}^{2}},
\end{equation*}%
where $\left\Vert D_{i}v_{k}\right\Vert _{i,k=1}^{3}$\ and $\left\Vert
D_{i}v_{k}\right\Vert _{i,k=1}^{2}$ define by equalities

\begin{center}
$\left\Vert D_{i}v_{k}\right\Vert _{i,k=1}^{3}\equiv \left\Vert 
\begin{array}{ccc}
D_{1}v_{1} & \frac{1}{2}\left( D_{1}v_{2}+D_{2}v_{1}\right) & \frac{1}{2}%
\left( D_{1}v_{3}+D_{3}v_{1}\right) \\ 
\frac{1}{2}\left( D_{1}v_{2}+D_{2}v_{1}\right) & D_{2}v_{2} & \frac{1}{2}%
\left( D_{2}v_{3}+D_{3}v_{2}\right) \\ 
\frac{1}{2}\left( D_{1}v_{3}+D_{3}v_{1}\right) & \frac{1}{2}\left(
D_{2}v_{3}+D_{3}v_{2}\right) & D_{3}v_{3}%
\end{array}%
\right\Vert $
\end{center}

and

\begin{center}
$\left\Vert D_{i}v_{k}\right\Vert _{i,k=1}^{2}\equiv \left\Vert 
\begin{array}{cc}
D_{1}v_{1} & \frac{1}{2}\left( D_{1}v_{2}+D_{2}v_{1}\right) \\ 
\frac{1}{2}\left( D_{1}v_{2}+D_{2}v_{1}\right) & D_{2}v_{2}%
\end{array}%
\right\Vert $ \ 
\end{center}

for any $\left( t,x\right) \in Q^{T}\equiv \left( 0,T\right) \times \Omega $.

Therefore we have 
\begin{equation}
\left( B\left( w,w\right) \right) \left( t,x\right) \equiv \underset{j,k=1}{%
\overset{3}{\sum }}\left( a_{jk}w_{k}w_{j}\right) \left( t,x\right) \equiv 
\underset{j=1}{\overset{3}{\sum }}b_{j}\left( t,x\right) \cdot
w_{j}^{2}\left( t,x\right)  \label{2.11}
\end{equation}

that one can rewrite in the following open form 
\begin{equation*}
B\left( w,w\right) \equiv \frac{1}{D_{1}v_{1}}\left[ 2D_{1}v_{1}w_{1}+\left(
D_{1}v_{2}+D_{2}v_{1}\right) w_{2}+\left( D_{1}v_{3}+D_{3}v_{1}\right) w_{3}%
\right] ^{2}+
\end{equation*}%
\begin{equation*}
\frac{1}{\left( 4D_{1}v_{1}\right) ^{2}}\left( 4D_{1}v_{1}D_{2}v_{2}-\left(
D_{1}v_{2}+D_{2}v_{1}\right) ^{2}\right) \times
\end{equation*}%
\begin{equation*}
\left[ \left( 4D_{1}v_{1}D_{2}v_{2}-\left( D_{1}v_{2}+D_{2}v_{1}\right)
^{2}\right) w_{2}\right. +
\end{equation*}%
\begin{equation*}
\left. \left( 2D_{1}v_{1}\left( D_{2}v_{3}+D_{3}v_{2}\right) -\left(
D_{1}v_{2}+D_{2}v_{1}\right) \left( D_{1}v_{3}+D_{3}v_{1}\right) \right)
w_{3}\right] ^{2}+
\end{equation*}%
\begin{equation*}
\frac{1}{4}\left[ 4D_{1}v_{1}D_{2}v_{2}D_{3}v_{3}+\left(
D_{1}v_{2}+D_{2}v_{1}\right) \left( D_{1}v_{3}+D_{3}v_{1}\right) \left(
D_{2}v_{3}+D_{3}v_{2}\right) \right. -
\end{equation*}%
\begin{equation*}
\left. D_{1}v_{1}\left( D_{2}v_{3}+D_{3}v_{2}\right) ^{2}-D_{2}v_{2}\left(
D_{1}v_{3}+D_{3}v_{1}\right) ^{2}-D_{3}v_{3}\left(
D_{1}v_{2}+D_{2}v_{1}\right) ^{2}\right] w_{3}^{2}.
\end{equation*}

If take account (\ref{2.11}) in the equation (\ref{2.9}) then we get 
\begin{equation*}
\frac{1}{2}\frac{\partial }{\partial t}\left\Vert w\right\Vert _{2}^{2}+\nu
\left\Vert \nabla w\right\Vert _{2}^{2}+\underset{j=1}{\overset{3}{\sum }}%
\left\langle b_{j}w_{j},w_{j}\right\rangle =0,\quad \left\Vert
w_{0}\right\Vert _{2}=0,
\end{equation*}%
or 
\begin{equation}
\frac{1}{2}\frac{\partial }{\partial t}\left\Vert w\right\Vert _{2}^{2}=-\nu
\left\Vert \nabla w\right\Vert _{2}^{2}-\underset{j=1}{\overset{3}{\sum }}%
\left\langle b_{j}w_{j},w_{j}\right\rangle ,\quad \left\Vert
w_{0}\right\Vert _{2}=0.  \label{2.12}
\end{equation}

This shows that if $b_{j}\left( t,x\right) \geq 0$ for a.e. $\left(
t,x\right) \in Q^{T}$ then the posed problem have unique solution. It is
need noted that images of functions $b_{j}\left( t,x\right) $ and $%
D_{i}v_{k} $ belong to the bounded subset of the same space.

So, is remains to investigate the cases when the mentioned isn't fulfill.

Here the following variants are possible:

1. Integral of $B\left( w,w\right) $ is determined and non-negative 
\begin{equation*}
\underset{\Omega }{\int }B\left( w,w\right) dx=\underset{j=1}{\overset{3}{%
\sum }}\left\langle b_{j}w_{j},w_{j}\right\rangle \equiv \underset{j=1}{%
\overset{3}{\sum }}{}\underset{\Omega }{\int }b_{j}w_{j}^{2}dx\geq 0;
\end{equation*}

In this case one can conclude the main problem have unique solution (and
this solution is stable).

2. Integral of is undetermined and $\underset{j=1}{\overset{3}{\sum }}{}%
\underset{\Omega }{\int }b\ w_{j}^{2}dx\neq 0$.

In this case for investigation of problem (\ref{2.12}) it is necessary to
derive suitable estimates for $B\left( w,w\right) \equiv \underset{j,k=1}{%
\overset{3}{\sum }}\left( D_{i}v_{k}w_{k}w_{j}\right) $.

So, let $\underset{\Omega }{\int }B\left( w,w\right) dx$ is undetermined.
Therefore we need estimate the right part of the equation from (\ref{2.12})

\begin{equation*}
\frac{1}{2}\frac{\partial }{\partial t}\left\Vert w\right\Vert _{2}^{2}=-\nu
\left\Vert \nabla w\right\Vert _{2}^{2}+\left\vert \underset{j,k=1}{\overset{%
3}{\sum }}\left\langle D_{i}v_{k}w_{k},w_{j}\right\rangle \right\vert \leq
\end{equation*}%
\begin{equation}
-\underset{j=1}{\overset{3}{\sum }}{}\underset{\Omega }{\int }\nu \left\vert
\nabla w_{j}\left( t,x\right) \right\vert ^{2}dx+\underset{j,k=1}{\overset{3}%
{\sum }}{}\underset{\Omega }{\int }\left\vert \left(
D_{i}v_{k}w_{k}w_{j}\right) \left( t,x\right) \right\vert dx,  \label{2.13}
\end{equation}%
more precisely, we need estimate the second adding in the right part of (\ref%
{2.13}). So, for one of the trilinear terms we obtain \footnote{%
It is known that (\cite{Lad1}, \cite{Lio1}) $\left\vert \left\langle
u_{k}D_{i}v_{j},w_{l}\right\rangle \right\vert \leq \left\Vert
u_{k}\right\Vert _{q}\left\Vert D_{i}v_{j}\right\Vert _{2}\left\Vert
w_{l}\right\Vert _{n},\quad n\geq 3;$%
\par
$\left\Vert v_{j}\right\Vert _{4}\leq C\left( mes\ \Omega \right) \left\Vert
Dv_{j}\right\Vert _{2}^{\frac{1}{2}}\left\Vert v_{j}\right\Vert _{2}^{\frac{1%
}{2}},\quad n=2$} 
\begin{equation*}
\left\vert \left\langle D_{i}v_{j}w_{i},w_{j}\right\rangle \right\vert \leq
\left\Vert D_{i}v_{j}\right\Vert _{2}\left\Vert w_{i}\right\Vert
_{p_{1}}\left\Vert w_{j}\right\Vert _{p_{2}},
\end{equation*}%
with use of the H\={o}lder inequality, where is sufficient to choose, $%
p_{1}=p_{2}=4$. Consequently, one can estimate $\underset{\Omega }{\int }%
B\left( w,w\right) dx$ as follows 
\begin{equation*}
\underset{\Omega }{\int }\left\vert B\left( w,w\right) \right\vert dx\leq 
\underset{i,j=1}{\overset{3}{\sum }}\left\Vert D_{j}v_{i}\right\Vert
_{2}\left\Vert w_{i}\right\Vert _{4}\left\Vert w_{j}\right\Vert _{4}.
\end{equation*}

{}Hence, use Gagliardo-Nirenberg inequality (see, e.g., \cite{BesIlNik}) we
get 
\begin{equation*}
\left\Vert w_{j}\right\Vert _{4}\leq c\left\Vert w_{j}\right\Vert
_{2}^{1-\sigma }\left\Vert \nabla w_{j}\right\Vert _{2}^{\sigma },\quad
\sigma =\frac{3}{4},
\end{equation*}%
where $c\equiv C\left( 4,2,2,0,1\right) $, and for this case 
\begin{equation*}
\left\Vert w_{j}\right\Vert _{4}\leq c\left\Vert w_{j}\right\Vert _{2}^{%
\frac{1}{4}}\left\Vert \nabla w_{j}\right\Vert _{2}^{\frac{3}{4}%
}\Longrightarrow \left\Vert w_{j}\right\Vert _{4}^{2}\leq c^{2}\left\Vert
w_{j}\right\Vert _{2}^{\frac{1}{2}}\left\Vert \nabla w_{j}\right\Vert _{2}^{%
\frac{3}{2}}.
\end{equation*}%
Therefore 
\begin{equation*}
\underset{\Omega }{\int }\left\vert B\left( w,w\right) \right\vert dx\leq
c^{2}\underset{i,j=1}{\overset{3}{\sum }}\left\Vert D_{j}v_{i}\right\Vert
_{2}\left\Vert w_{i}\right\Vert _{2}^{\frac{1}{4}}\left\Vert \nabla
w_{i}\right\Vert _{2}^{\frac{3}{4}}\left\Vert w_{j}\right\Vert _{2}^{\frac{1%
}{4}}\left\Vert \nabla w_{j}\right\Vert _{2}^{\frac{3}{4}}
\end{equation*}%
holds. Now taking into account the above estimate in (\ref{2.14}) we derive 
\begin{equation*}
\frac{1}{2}\frac{\partial }{\partial t}\left\Vert w\left( t\right)
\right\Vert _{2}^{2}\leq -\underset{j=1}{\overset{3}{\sum }}{}\nu \left\Vert
\nabla w_{j}\left( t\right) \right\Vert _{2}^{2}+c^{2}\underset{i,j=1}{%
\overset{3}{\sum }}{}\left\Vert D_{j}v_{i}\left( t\right) \right\Vert
_{2}\left\Vert w_{i}\left( t\right) \right\Vert _{2}^{\frac{1}{2}}\left\Vert
\nabla w_{i}\left( t\right) \right\Vert _{2}^{\frac{3}{2}}
\end{equation*}%
\begin{equation*}
\leq -\underset{j=1}{\overset{3}{\sum }}\left\Vert \nabla w_{j}\left(
t\right) \right\Vert _{2}^{\frac{3}{2}}\left[ \nu \left\Vert \nabla
w_{j}\left( t\right) \right\Vert _{2}^{\frac{1}{2}}-c^{2}\underset{i=1}{%
\overset{3}{\sum }}\left\Vert D_{i}v_{j}\left( t\right) \right\Vert
_{2}\left\Vert w_{j}\left( t\right) \right\Vert _{2}^{\frac{1}{2}}\right]
\end{equation*}%
\begin{equation*}
\leq -\underset{j=1}{\overset{n}{\sum }}\left\Vert \nabla w_{j}\left(
t\right) \right\Vert _{2}^{\frac{3}{2}}\left[ \nu \lambda _{1}^{\frac{1}{4}%
}-c^{2}\underset{i=1}{\overset{n}{\sum }}\left\Vert D_{i}v_{j}\left(
t\right) \right\Vert _{2}\right] \left\Vert w_{j}\left( t\right) \right\Vert
_{2}^{\frac{1}{2}}.
\end{equation*}%
Whence follows, that if $\nu \lambda _{1}^{\frac{1}{4}}\geq c^{2}\underset{%
i=1}{\overset{3}{\sum }}\left\Vert D_{i}v_{j}\left( t\right) \right\Vert
_{2} $ then problem (1.1$^{1}$)-(\ref{3}) has only unique solution (and
solution is stable), where $\lambda _{1}$ is minimum of the spectrum of the
operator Laplace. Thus is proved

\begin{theorem}
\label{Th_3.1}Let $\Omega \in R^{3}$ be a open bounded domain of Lipschitz
class, $\left( u_{0},f\right) \in H\left( \Omega \right) \times L^{2}\left(
0,T;V^{\ast }\left( \Omega \right) \right) $ and weak solution $u\left(
t,x\right) $ of problem (1.1$^{1}$)-(\ref{3}) exists and $u\in \mathcal{V}%
\left( Q^{T}\right) $. Then if either $\underset{\Omega }{\int }\left\vert
B\left( w,w\right) \right\vert dx\geq 0$ or $\underset{\Omega }{\int }%
\left\vert B\left( w,w\right) \right\vert dx\neq 0$ (is undetermined) and $%
\nu \lambda _{1}^{\frac{1}{4}}\geq c^{2}\underset{i=1}{\overset{3}{\sum }}%
\left\Vert D_{i}u_{j}\left( t\right) \right\Vert _{2}$ fulfilled then weak
solution $u\left( t,x\right) $ is unique.\bigskip
\end{theorem}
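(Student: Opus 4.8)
The plan is to argue by contradiction, using the classical energy method but exploiting the sign structure of the nonlinear term, which (\ref{2.12}) already isolates. Suppose problem (1.1${}^{1}$)-(\ref{3}) has two distinct weak solutions $u,v\in \mathcal{V}\left( Q^{T}\right) $ and set $w=u-v$; subtracting the two copies of (\ref{2.3a}), testing with $w(t)\in V\left( \Omega \right) $ for a.e.\ $t$, writing the nonlinear increment as $\sum_{j}u_{j}D_{j}u-\sum_{j}v_{j}D_{j}v=\sum_{j}u_{j}D_{j}w+\sum_{j}w_{j}D_{j}v$ and using $\func{div}u=0$ to kill $\left\langle \sum_{j}u_{j}D_{j}w,w\right\rangle $ produces (\ref{2.9}) with $w(0)=0$, in which the single surviving trilinear term is $\sum_{j,k}\left\langle \left( D_{j}v_{k}\right) w_{k},w_{j}\right\rangle =\int_{\Omega }B\left( w,w\right) \,dx$. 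Since $w_{k}w_{j}$ is symmetric in $j,k$, the matrix of $B$ may be replaced by its symmetrisation $a_{jk}=\frac{1}{2}\left( D_{j}v_{k}+D_{k}v_{j}\right) $, and completing squares (Lagrange) gives, pointwise in $\left( t,x\right) $, the canonical form $B\left( w,w\right) =\sum_{j}b_{j}w_{j}^{2}$ with the $b_{j}$ the displayed ratios of leading principal minors, wherever those minors do not vanish; substituting this into (\ref{2.9}) is exactly (\ref{2.12}).

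In the first alternative, if $\int_{\Omega }B\left( w,w\right) \,dx\geq 0$ a.e.\ in $t$, then the right side of (\ref{2.12}) is $\leq -2\nu \Vert \nabla w\Vert _{2}^{2}\leq 0$, so integrating in time and using $\Vert w(0)\Vert _{2}=0$ forces $\Vert w(t)\Vert _{2}\equiv 0$, i.e.\ $u\equiv v$; the same monotonicity, applied with data $\Vert w(0)\Vert _{2}=\delta $, gives $\Vert w(t)\Vert _{2}\leq \delta $ for all $t$, which is the asserted stability. In the second alternative I would bound $\int_{\Omega }\left\vert B\left( w,w\right) \right\vert \,dx$ termwise starting from (\ref{2.13}): H\"older gives $\left\vert \left\langle \left( D_{j}v_{i}\right) w_{i},w_{j}\right\rangle \right\vert \leq \Vert D_{j}v_{i}\Vert _{2}\Vert w_{i}\Vert _{4}\Vert w_{j}\Vert _{4}$, and the Gagliardo--Nirenberg inequality in $R^{3}$ with $\sigma =3/4$ gives $\Vert w_{j}\Vert _{4}^{2}\leq c^{2}\Vert w_{j}\Vert _{2}^{1/2}\Vert \nabla w_{j}\Vert _{2}^{3/2}$. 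Plugging these into (\ref{2.13}), rearranging, applying Young's inequality to the products of the $\Vert \nabla w_{\bullet }\Vert _{2}$ factors and using Poincar\'e's inequality $\Vert \nabla w_{j}\Vert _{2}\geq \lambda _{1}^{1/2}\Vert w_{j}\Vert _{2}$ (hence $\Vert \nabla w_{j}\Vert _{2}^{1/2}\geq \lambda _{1}^{1/4}\Vert w_{j}\Vert _{2}^{1/2}$) should yield
\begin{equation*}
\frac{1}{2}\frac{d}{dt}\Vert w\Vert _{2}^{2}\leq -\underset{j=1}{\overset{3}{\sum }}\Vert \nabla w_{j}(t)\Vert _{2}^{3/2}\Bigl[ \nu \lambda _{1}^{1/4}-c^{2}\underset{i=1}{\overset{3}{\sum }}\Vert D_{i}v_{j}(t)\Vert _{2}\Bigr] \Vert w_{j}(t)\Vert _{2}^{1/2}.
\end{equation*}
Under the hypothesis $\nu \lambda _{1}^{1/4}\geq c^{2}\sum_{i}\Vert D_{i}u_{j}(t)\Vert _{2}$ (which, since $u$ and $v$ satisfy the same a priori bound in $\mathcal{V}\left( Q^{T}\right) $, may be read with $v$ in place of $u$) every bracket is $\geq 0$, so again $\frac{d}{dt}\Vert w\Vert _{2}^{2}\leq 0$ and $w\equiv 0$, with stability as before.

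I expect the main obstacle to be the diagonalisation of $B\left( w,w\right) $: the formulas for $b_{2},b_{3}$ divide by $b_{1}=D_{1}v_{1}$ and by the $2\times 2$ leading minor, which may vanish on subsets of $Q^{T}$ of positive measure. One way around this is to permute the coordinates so that a nonvanishing $2\times 2$ minor is used on the relevant set, and to treat the degenerate locus separately, where $B\left( w,w\right) $ has rank $\leq 2$ and the same Young--Poincar\'e estimate applies; a cleaner alternative, avoiding the canonical form altogether, is the crude bound $\left\vert B\left( w,w\right) \right\vert \leq \bigl( \sum_{i,j}a_{ij}^{2}\bigr) ^{1/2}\left\vert w\right\vert ^{2}\leq C\left\vert \nabla v\right\vert \left\vert w\right\vert ^{2}$, which already suffices for the second alternative and makes clear that the canonical form is needed only to formulate the sharp sign condition of the first alternative. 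A secondary point needing care is the justification of the integration-by-parts identities (in particular $\left\langle \sum_{j}u_{j}D_{j}w,w\right\rangle =-\frac{1}{2}\int_{\Omega }\left\vert w\right\vert ^{2}\func{div}u\,dx=0$ and the symmetrisation step), which is routine by density of smooth divergence-free fields given $w\in \mathcal{V}\left( Q^{T}\right) \subset L^{\infty }\left( 0,T;H\right) \cap L^{2}\left( 0,T;V\right) $.
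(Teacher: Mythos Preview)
Your proposal is correct and follows essentially the same route as the paper: derive the energy identity (\ref{2.9}) for $w=u-v$, symmetrise and diagonalise the quadratic form $B(w,w)$ to reach (\ref{2.12}), dispose of the sign-definite case directly, and in the remaining case combine H\"older ($L^{2}$--$L^{4}$--$L^{4}$), Gagliardo--Nirenberg with $\sigma=3/4$, and Poincar\'e to arrive at the same bracketed inequality. Your added remarks on the possible vanishing of the leading minors and on justifying the integration by parts are points the paper itself glosses over, so if anything your write-up is more careful than the original.
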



\begin{thebibliography}{99}
\bibitem{Ler1} J. Leray, Sur le Mouvement d'un Liquide Visquex Emplissent
l'Espace, Act aMat h. J. 63, 193--248 (1934).

\bibitem{Lad1} O. Ladyzhenskaya, \textit{The Mathematical Theory of Viscous
Incompressible Flows} (2nd edition), Gordon and Breach, (1969).

\bibitem{MajBer} Majda, Andrew J.; Bertozzi, Andrea L. \textit{Vorticity and
incompressible flow}, Cambridge Texts in Applied Mathematics, 27, Cambridge
University Press, Cambridge, (2002).

\bibitem{Con1} P. Constantin, Some open problems and research directions in
the mathematical study of fluid dynamics, in Mathematics Unlimited--2001 and
Beyond, Springer, Berlin, 353--360, (2001).

\bibitem{Fef1} Charles L. Fefferman, Existence and smoothness of the
Navier-Stokes Equation, Millennium Problems, Clay Math. Inst., 2006.

\bibitem{Gal} Galdi G.P., \textit{An introduction to the mathematical theory
of the Navier--Stokes equations. Steady-state problems.} Second edition.
Springer Monographs in Mathematics. Springer, New York, xiv+1018 pp, 2011.

\bibitem{Lio1} J.-L. Lions, \textit{Quelques methodes de resolution des
problemes aux limites non lineaires}, Dunod, Gauthier-Villars, Paris, 1969.

\bibitem{LioPro} J.-L. Lions, G. Prodi, Un theoreme d'existence et unicite
dans les equations de Navier-Stokes en dimension 2, C. R. Acad. Sc. Paris,
248, 3519-3521, 1959.

\bibitem{Lad2} O. Ladyzhenskaya, Ob odnoznachnoj razreshimosti v tselom
trekhmernoj zadachi Cauchy dkja uravnenija Navier-Stokes pri nalichii osevoj
simmetrii, Zap. nauch. sem. LOMI, 7, 155-177, 1968.

\bibitem{Sch1} V. Scheffer, Turbulence and Hausdorff dimension, in
Turbulence and the Navier--Stokes Equations, Lecture Notes in Math. No. 565,
Springer Verlag, pp. 94--112, 1976.

\bibitem{Shn1} A. Shnirelman, On the nonuniqueness of weak solutions of the
Euler equation, Comm. Pure \& Appl. Math. 50 , 1260--1286, 1997.

\bibitem{Sch2} V. Scheffer, An inviscid flow with compact support in
spacetime, J. Geom. Analysis 3, No. 4, 343--401, 1993

\bibitem{Fur} A. V. Fursikov, Control problems and theorems concerning the
unique solvability of a mixed boundary value problem for the
three-dimensional Navier-Stokes and Euler equations, Math. USSR Sbornik, 43,
2, 251-273, 1982

\bibitem{CafKohNir} L. Caffarelli, R. Kohn, L. Nirenberg, Partial regularity
of suitable weak solutions of the Navier--Stokes equations, Comm. Pure \&
Appl. Math. 35, 771--831, 1982.

\bibitem{Lin1} F.--H. Lin, A new proof of the Caffarelli--Kohn--Nirenberg
theorem, Comm. Pure. \& Appl. Math. 51, 241--257, 1998.

\bibitem{ConKukVic} P. Constantin, I. Kukavica, V. Vicol, On the inviscid
limit of the Navier-Stokes equations, Proc. AMS, 143, 7, 3075-3090, 2015.

\bibitem{FoiManRosTem} C. Foias, O. Manley, R. Rosa, R. Temam, \textit{%
Turbulence and Navier-Stokes equations}, Cambridge University Press, 2001.

\bibitem{FoiRosTem1} C. Foias, R. Rosa and R.Temam, A note on statistical
solutions of the three-dimensional Navier-Stokes equations: the
time-dependent case, C. R. Acad. Sc. Paris, Ser. I, 348, 235-240, 2010.

\bibitem{FoiRosTem2} C. Foias, R. Rosa, R.Temam, A note on statistical
solutions of the three-dimensional Navier-Stokes equations: the stationary
case, C. R. Acad. Sci. Paris, Ser. I, 348, 347-353, 2010.

\bibitem{FoiRosTem3} C. Foias, R. M. S. Rosa, R. M. Temam, Convergence of
Time Averages of Weak Solutions of the Three-Dimensional Navier-Stokes
Equations, J. Statistical Phys., 160, 3, 519-531, 2015.

\bibitem{Gan} F. R. Gantmacher, \textit{The theory of matrices}, v.I,
Chelsea Publ. Comp., N.-Y., 1959.

\bibitem{GlaSveVic} N. Glatt-Holtz, V. Sverak, V. Vicol, On inviscid limits
for the stochastic Navier-Stokes equations and related models, Archive for
rational mech. and analysis, 217, 2, 619-649, 2015.

\bibitem{Hop} E. Hopf, On nonlinear partial differential equations, Lecture
Series of the Symposium on Partial Differential Equations, Berkeley, 1955,
Ed. The Univ. of Kansas, I-29, 1957.

\bibitem{HuaWan} X. Huang, Y. Wang, Global strong solution of 3D
inhomogeneous Navier-Stokes equations with density-dependent viscosity, J.
Diff. Eq., 259, 4, 1606-1627, 2015.

\bibitem{PerZat} Ch. Perrin, E. Zatorska, Free/Congested two-phase model
from weak solutions to multi-dimensional compressible Navier-Stokes
equations, Comm. in PDE, 40, 8, 1558-1589, 2015.

\bibitem{Sol1} K. N. Soltanov, Perturbation of the mapping and solvability
theorems in the Banach space. Nonlinear Analysis: T.M.\&A, 72, 1, 2010.

\bibitem{Sol2} K. N. Soltanov, \textit{Some applications of the nonlinear
analysis to the differential equations}, Baku, ELM, (in Russian), 2002.

\bibitem{Tem1} R. Temam, \textit{Navier-Stokes Equations Theory and
Numerical Analysis}, North- Holland Pub. Comp., in English, Third revised
edition, 1984.

\bibitem{LioMag} J.-L. Lions, E.Magenes, Problemes aux limites non homogenes
et applications. Vol. 1, Dunod, Paris, (1968); English transl.,
Springer-Verlag, 1972.

\bibitem{BesIlNik} O. V. Besov, V.P. Il'in, S. M. Nikol'skii, \textit{%
Integral'nye predstavleniya funktsii i teoremy vlozheniya (Integral
Representations of Functions and Embedding Theorems)}, 2nd ed., rev. and
compl., Moscow: Nauka, 1996.

\bibitem{Sol3} K. N. Soltanov, On some modification of the Navier-Stokes
equations. Nonlinear Analysis, T. M. \& A., 52, 3, 769-793, 2003.

\bibitem{SolAhm} K. N. Soltanov, M. Ahmadov, Solvability of Equation of
Prandtl-von Mises type, Theorems of Embedding. Transactions NAS of
Azerbaijan, Ser. Phys.-Tech. and Math. Sci., 37, 1, 2017.

\bibitem{Sol4} K. N. Soltanov, Some imbedding theorems and nonlinear
differential equations. Trans. Ac. Sci. Azerb., ser. Math. \& Phys.-Techn,
19, 5, 125 - 146, 1999.

\bibitem{Nac} A. I. Nachman - Reconstructions from boundary measurements,
Annals of Mathematics, 128, 531-576, 1988.

\bibitem{BehEl} J. Behrndt, A.F.M. ter Elst, Dirichlet-to-Neumann maps on
bounded Lipschitz domains, J. Diff. Eq., 259, 5903-5926, 2015.

\bibitem{H-DR} R. Haller-Dintelmann, J. Rehberg, Maximal parabolic
regularity for divergence operators including mixed boundary conditions, J.
Funct. Anal. 247, 1354--1396, 2009.

\bibitem{BelCho} M. Bellassoued, M. Choulli, Stability estimate for an
inverse problem for the magnetic Schr\"{o}dinger equation from the
Dirichlet-to-Neumann map, J. Funct. Anal., 258, 161--195, 2010.

\bibitem{DePrZac} R. Denk, J. Pr\"{u}ss, R. Zacher, Maximal Lp-regularity of
parabolic problems with boundary dynamics of relaxation type, J. Funct.
Anal., 255, 3149--3187, 2008.

\bibitem{ChL-RiMay} D. Chamorro, P-G. Lemari\'{e}-Rieusset, K. Mayoufi, The
Role of the Pressure in the Partial Regularity Theory for Weak Solutions of
the Navier--Stokes Equations, Arch. Rational Mech. Anal. 228 237--277, 2018.
\end{thebibliography}
\end{document}